\documentclass[11pt,a4paper]{amsart}

\setlength{\parindent}{0.5cm}
\setlength{\topmargin}{0.1cm}
\setlength{\textwidth}{14.3cm}
\setlength{\oddsidemargin}{0.9cm}
\setlength{\evensidemargin}{0.9cm}
\setlength{\textheight}{22cm}

\usepackage{latexsym}
\usepackage{amsmath}
\usepackage{amsthm}
\usepackage{amscd}
\usepackage{amssymb}
\usepackage{hyperref}
\usepackage{mdframed}
\usepackage{graphicx}
\usepackage{hyperref}
\usepackage{enumerate}
\usepackage{mathrsfs}
\usepackage{url}
\usepackage{tikz}
\usepackage{tikz-cd}
\usepackage{calc}

\newtheorem{introtheorem}{Theorem}
\newtheorem{theorem}{Theorem}[section]
\newtheorem{proposition}[theorem]{Proposition}
\newtheorem{lemma}[theorem]{Lemma}
\newtheorem{corollary}[theorem]{Corollary}

\theoremstyle{definition}
\newtheorem{defn}[theorem]{Definition}
\theoremstyle{definition}
\newtheorem{rem}[theorem]{Remark}
\theoremstyle{definition}
\newtheorem{ex}[theorem]{Example}
\theoremstyle{definition}

\numberwithin{equation}{section}

\let\hom\relax\DeclareMathOperator{\hom}{Hom}
\DeclareMathOperator{\spec}{Spec}

\DeclareMathOperator{\NP}{NP}
\DeclareMathOperator{\an}{an}
\DeclareMathOperator{\val}{val}

\DeclareMathOperator{\Haar}{Haar}

\DeclareMathOperator{\can}{can}
\DeclareMathOperator{\tor}{tor}

\DeclareMathOperator{\id}{id}
\DeclareMathOperator{\trop}{trop}

\DeclareMathOperator{\divisor}{div}
\DeclareMathOperator{\primitive}{primitive}
\DeclareMathOperator{\vol}{vol}
\DeclareMathOperator{\MM}{M\mathcal{M}}
\DeclareMathOperator{\MV}{MV}
\DeclareMathOperator{\MI}{MI}
\DeclareMathOperator{\dom}{dom}
\DeclareMathOperator{\stab}{stab}
\DeclareMathOperator{\rec}{rec}

\DeclareMathOperator{\FS}{FS}
\DeclareMathOperator{\Gal}{Gal}
\DeclareMathOperator{\sep}{sep}
\usepackage{amssymb}
\usepackage[all]{xy}

\begin{document}

\title{Heights of hypersurfaces in toric varieties}
\author[Gualdi]{Roberto~Gualdi}
\address{Institut de Math\'ematiques de Bordeaux, Universit\'e de Bordeaux, cours de la Lib\'eration 351, 33405 Talence, France.}
\email{roberto.gualdi@math.u-bordeaux.fr}
\urladdr{\url{https://www.math.u-bordeaux.fr/~robgualdi/}}
\keywords{Toric variety, height of a variety, Ronkin function, Legendre-Fenchel duality, mixed integral}
\subjclass[2010]{Primary 14M25; Secondary 11G50, 14G40, 52A39}
\date{}

\maketitle

\begin{abstract}
For a cycle of codimension $1$ in a toric variety, its degree with respect to a nef toric divisor can be understood in terms of the mixed volume of the polytopes associated to the divisor and to the cycle. We prove here that an analogous combinatorial formula holds in the arithmetic setting: the global height of a $1$-codimensional cycle with respect to a toric divisor equipped with a semipositive toric metric can be expressed in terms of mixed integrals of the $v$-adic roof functions associated to the metric and the Legendre-Fenchel dual of the $v$-adic Ronkin function of the Laurent polynomial of the cycle.
\end{abstract}

\setcounter{tocdepth}{1}
\tableofcontents
\section*{Introduction}

%
%

The arithmetic intersection theory of toric varieties with respect to toric line bundles equipped with their canonical metric was first studied by Maillot in \cite{Mail}. Later, the systematic extension of the toric dictionary to Arakelov geometry was carried out by Burgos Gil, Philippon and Sombra in \cite{BPS}. It turns out from their study that suitably metrized toric line bundles can be expressed in terms of families of concave functions on convex polytopes and that the height of the toric variety with respect to this choice is related to the integral of such functions. Their theory allows to treat a large spectrum of height functions, namely the ones arising from toric line bundles equipped with toric metrics; this includes the canonical heights studied by Maillot and the Fubini-Study height. On the other hand, the techniques developed in \cite{BPS} only apply to the computation of the height of toric subvarieties and do not solve, for instance, the problem of determining the height of a general cycle of codimension $1$. This question was answered in a very special case by \cite{Mail}, where a relation between the canonical height of a hypersurface in a smooth projective toric variety and the Mahler measure of the corresponding polynomial was given. Other computations have been performed by Cassaigne and Maillot in \cite{CM} for the Fubini-Study height of hypersurfaces in projective spaces. Extending the techniques of \cite{BPS}, we give here a combinatorial formula for the height of a $1$-codimensional cycle in a toric variety for a much more general choice of metrics. 
%
%
\vspace{\baselineskip}
\\For the sake of simplicity, we restrict for the moment to the case of an ambient proper toric variety $X_\Sigma$ of dimension $n$ over $\mathbb{Q}$, leaving the treatment of the case of an arbitrary base adelic field to the body of the paper. Let $\mathfrak{M}$ stand for the set of places of $\mathbb{Q}$. As usual in toric geometry, we denote by $M$ the lattice of characters of the torus of $X_\Sigma$, by $N$ its dual lattice, and by $M_\mathbb{R}$ and $N_\mathbb{R}$ the corresponding real vector spaces. We are interested in a combinatorial expression for the height of a cycle of codimension $1$ in $X_\Sigma$ with respect to a suitable choice of a metrized (Cartier) divisor. By the linearity of height functions, we can restrict to the case of an irreducible hypersurface $Y$. Moreover, since irreducible hypersurfaces in $X_\Sigma$ not intersecting its dense open torus have to coincide with $1$-codimensional toric orbits, whose height has already been calculated in \cite[Proposition 5.1.11]{BPS}, we can assume that the generic point of $Y$ lies in the dense open orbit of $X_\Sigma$. Under this assumption, $Y$ is described by an irreducible Laurent polynomial $f$ with rational coefficients. Its Newton polytope $\NP(f)$ is a nonempty subset of $M_\mathbb{R}$ capturing enough information for the intersection theoretical properties of $Y$. For instance, \hyperref[degree of an hypersurface]{Proposition \ref*{degree of an hypersurface}} implies that the degree of $Y$ with respect to a toric divisor $D$ on $X_\Sigma$ generated by its global sections is given by \[\deg_D(Y)=\MV_M(\Delta,\dots,\Delta,\NP(f)),\]where $\Delta$ is the polytope in $M_\mathbb{R}$ associated to $D$ and $\MV_M$ denotes the mixed volume of convex bodies in $M_\mathbb{R}$ with respect to a suitably normalized Haar measure.
\\The height of a cycle in $X$ is the arithmetic counterpart of its degree with respect to a divisor $D$. Its definition requires as an extra datum the choice of an adelic semipositive metric on $D$, see \hyperref[section about Arakelov on toric varieties]{section \ref*{section about Arakelov on toric varieties}} for a precise definition. To have a combinatorial description of heights in toric varieties, it is necessary to ask $D$ to be a toric divisor (with associated polytope $\Delta$) and the metric on it to be ``toric invariant'' in some sense. In such a situation, Burgos Gil, Philippon and Sombra have shown that a combinatorial description is possible, translating the additional information of the metric into an extra dimension on the convex geometrical side: an adelic semipositive toric metric on $D$ is associated to a family $(\vartheta_v)_{v\in\mathfrak{M}}$ of continuous concave functions on $\Delta$, called the \emph{roof functions} of the metric, such that $\vartheta_v=0$ for all but finitely many $v$. We show how the height of $Y$ with respect to the adelic semipositive toric metrized divisor $\overline{D}$ can be expressed using such an extra dimensional representation, in a spirit analogous to the formula for its degree mentioned above. The key idea consists in associating to the polynomial $f$ defining $Y$, for every place $v$ of $\mathbb{Q}$, a suitable function which we call the \emph{$v$-adic Ronkin function} of $f$ and denote by $\rho_{f,v}$. It is a concave function on $N_\mathbb{R}$ whose value at $u$ can be interpreted as an average of $-\log|f|$ on the fiber of the tropicalization map over $u$. When $v$ is archimedean, it is the Ronkin function studied by Passare and Rullg{\aa}rd among others, while for non-archimedean places it coincides with the $v$-adic tropicalization of the polynomial $f$. Its Legendre-Fenchel dual $\rho_{f,v}^\vee$ is a concave function on $M_\mathbb{R}$ which is supported on the Newton polytope of $f$. Recall now that Philippon and Sombra have introduced in \cite{PS1} a polarized version of the integration of a concave function with bounded support, called the \emph{mixed integral}. For the choice of a suitably normalized Haar measure on the vector space $M_\mathbb{R}$, it is a multilinear symmetric real valued function $\MI_M$ taking as entries $n+1$ concave functions supported on convex bodies in $M_\mathbb{R}$.

\begin{introtheorem}\label{main theorem introduction}
The height of $Y$ with respect to $\overline{D}$ is given by \[h_{\overline{D}}(Y)=\sum_{v\in\mathfrak{M}}\MI_M\big(\vartheta_v,\dots,\vartheta_v,\rho_{f,v}^\vee\big).\]
\end{introtheorem}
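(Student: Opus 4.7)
The plan is to write $Y$ as a principal divisor plus a toric correction, translate each piece into a $v$-adic integral through arithmetic intersection theory, and then collapse the resulting expressions into mixed integrals via Legendre--Fenchel duality.

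\textbf{Step 1 (algebraic decomposition).} As $f$ is a Laurent polynomial, its Cartier divisor on $X_\Sigma$ reads
\[
\divisor(f)\;=\;Y\;-\;\sum_{\rho\in\Sigma(1)} m_\rho\, V(\rho),\qquad m_\rho\;:=\;\min_{m\in\NP(f)}\langle m,u_\rho\rangle,
\]
where $u_\rho$ is the primitive generator of the ray $\rho$ and $V(\rho)$ the corresponding invariant prime divisor. Linearity of heights gives
\[
h_{\overline{D}}(Y)\;=\;h_{\overline{D}}(\divisor(f))\;+\;\sum_{\rho}m_\rho\,h_{\overline{D}}(V(\rho)).
\]
The second summand is handled by \cite[Proposition~5.1.11]{BPS}, which expresses each $h_{\overline{D}}(V(\rho))$ as a sum over places of integrals of the roof functions $\vartheta_v$ on the facet of $\Delta$ corresponding to $\rho$.

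\textbf{Step 2 (principal part as local integrals).} Since $\divisor(f)$ is principal, unwinding its arithmetic intersection with $\overline{D}^{\,n}$ yields
\[
h_{\overline{D}}(\divisor(f))\;=\;-\sum_{v\in\mathfrak{M}}\int_{X_{\Sigma,v}^{\an}}\!\log|f|_v\;c_1(\overline{D})_v^{\wedge n}.
\]
At every place $v$ the Chambert-Loir measure $c_1(\overline{D})_v^{\wedge n}$ is toric-invariant, so its push-forward under the tropicalisation $\Log_v$ equals, up to a factor of $n!$, the Monge--Amp\`ere measure $\operatorname{MA}(\psi_{\overline{D},v})$ of the concave function $\psi_{\overline{D},v}$ on $N_\mathbb{R}$ attached to $\overline{D}_v$ (the Legendre--Fenchel dual of $\vartheta_v$ in the BPS dictionary). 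By the very definition of $\rho_{f,v}$, averaging $-\log|f|_v$ along a fibre of $\Log_v$ returns $\rho_{f,v}$. Combining these two observations,
\[
-\int_{X_{\Sigma,v}^{\an}}\!\log|f|_v\;c_1(\overline{D})_v^{\wedge n}\;=\;n!\int_{N_\mathbb{R}}\rho_{f,v}\;d\!\operatorname{MA}(\psi_{\overline{D},v}).
\]

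\textbf{Step 3 (from $\operatorname{MA}$-integrals to mixed integrals).} The recession of $\rho_{f,v}$ coincides with the support function of $\NP(f)$, so its Legendre--Fenchel dual $\rho_{f,v}^\vee$ is concave and supported on $\NP(f)$. A polarised identity extending the convex-analytic formulas of \cite{PS1} and \cite{BPS} will give, for every $v$,
\[
n!\int_{N_\mathbb{R}}\rho_{f,v}\;d\!\operatorname{MA}(\psi_{\overline{D},v})\;=\;\MI_M(\vartheta_v,\ldots,\vartheta_v,\rho_{f,v}^\vee)\;-\;\sum_{\rho} m_\rho\cdot I_{v,\rho}(\vartheta_v),
\]
where each $I_{v,\rho}(\vartheta_v)$ is exactly the facet integral providing the place-$v$ contribution to $h_{\overline{D}}(V(\rho))$ from Step~1. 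Summing over $v$, the boundary terms cancel the toric correction produced in Step~1 and the theorem follows.

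\textbf{Expected main obstacle.} The heart of the argument is Step~3: the $\operatorname{MA}$-integral involves the non-compactly-supported function $\rho_{f,v}$, whose linear recession obstructs a direct appeal to the mixed-integral formulas known for integrands of compact support. Pinning down the correction term and matching it with the toric contribution of Step~1 requires a careful analysis of how recession behaviour transfers under Legendre--Fenchel duality, and this must be done uniformly at archimedean and non-archimedean places, where the integrals live respectively on the classical complex analytification and on the Berkovich analytification via Chambert-Loir measures.
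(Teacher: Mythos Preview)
Your outline is essentially correct and would go through, but the paper takes a tidier route that avoids the explicit boundary bookkeeping you set up. Instead of decomposing $\divisor(f)=Y-\sum_\rho m_\rho V(\rho)$ and tracking the cancellation between the toric orbit heights from Step~1 and the facet terms from Step~3, the paper packages the Ronkin function directly into a metric: it equips the toric divisor $D_f$ attached to $\Psi_{\NP(f)}$ with the semipositive toric metric whose metric function is $\rho_{f,v}$ (the \emph{Ronkin metric}), so that the rational section $\tilde{f}s_f$ of $\mathscr{O}(D_f)$ has divisor exactly $Y$ and, crucially, $\log\|\tilde{f}s_f\|_{f,v}=\log|f|_v+\rho_{f,v}\circ\trop_v$ integrates to zero against the toric Chambert--Loir measure by the very definition of $\rho_{f,v}$. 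This yields $h_{\overline{D}_{0,v},\dots,\overline{D}_{n-1,v}}(Y;s_i)=h_{\overline{D}_{0,v},\dots,\overline{D}_{n-1,v},\overline{D}_{f,v}}(X_\Sigma;s_i,\tilde{f}s_f)$ in one stroke, after which the BPS formula for $X_\Sigma$ gives the mixed integral directly. Your approach instead recovers the same answer by using the recursive formula for mixed integrals (which is precisely the ``polarised identity'' you invoke in Step~3, proved in the paper as Theorem~\ref{recursive formula with mixed integral and mixed real Monge-Ampere measure}) to split $\MI_M(\vartheta_v,\dots,\vartheta_v,\rho_{f,v}^\vee)$ into the Monge--Amp\`ere integral of $\rho_{f,v}$ plus the facet contributions. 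The paper's packaging is cleaner and sidesteps the sign and facet/ray correspondence issues you would need to chase (note for instance that your displayed equation in Step~2 has a sign error: the height of the principal divisor equals $+\sum_v\int\log|f|_v\,c_1(\overline{D})_v^{\wedge n}$, which is then $-n!\sum_v\int\rho_{f,v}\,d\mathrm{MA}(\psi_v)$, and the signs in Step~3 must be adjusted accordingly). What your approach buys is that it makes the cancellation mechanism completely transparent, whereas the paper's Ronkin-metric trick hides it inside the choice of metric.
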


Despite the complexity of the computation of the archimedean Ronkin function, the formula in the previous theorem clarifies the relation between the defining polynomial of an irreducible hypersurface and its height with respect to an adelic semipositive toric metrized divisor. 
It is easy to specialize it to the case of the canonical metric on $D$, where it reduces to the equality proved in \cite{Mail}, or of the Fubini-Study metric in the projective setting. We hope that a better understanding of the properties of mixed integrals and archimedean Ronkin functions could be used to deduce both lower and upper bounds for the height of $Y$. More importantly, our result asserts that the collection of the $v$-adic Ronkin functions of a hypersurface contains enough information to determine its height; we wonder whether other arithmetical properties of $Y$ might be read in terms of such functions.
%
%
\vspace{\baselineskip}
\\To show the stated result, we prove more precise formulas for the local height and the toric local height of a $1$-codimensional cycle. We also show some new properties of mixed integrals and we propose a more uniform definition and study of $v$-adic Ronkin functions which is independent on whether the place $v$ is archimedean or not. The obtained formulas for the height extend to the case of admissible adelic toric metrized divisors as alternated sums of mixed integrals, as in \cite[Remark 5.1.10]{BPS}.
\\For an arbitrary adelic base field $K$, we remark that one needs to prove that the global height of $Y$ with respect to an adelic semipositive toric metrized divisor is a finite sum and hence well-defined. This is automatic if $K$ is a global field, because of \cite[Proposition 1.5.14 and Theorem 4.9.3]{BPS}. We show it here for an arbitrary adelic field $K$ with product formula, in which case the formulas for the height stay true. In the more general setting of an adelic field $K$ not satisfying the product formula, it is easy to verify that the same equality for the global height holds up to the sum by the \textit{defect} of $K$, see \cite[Definition 1.5.9]{BPS}. Finally, the recent work \cite{GH} suggests that similar statements might hold for a base $M$-field.
%
%
\vspace{\baselineskip}
\\We now briefly summarize the content of each section. 
\\In \hyperref[section about convex geometry]{section \ref*{section about convex geometry}}, we recall the tools from convex geometry which are needed throughout all the paper: Legendre-Fenchel duality of concave functions, real Monge-Amp\`ere measures and mixed integrals. In particular, we re-interpret the recursive formula for mixed integrals proved by Philippon and Sombra in terms of mixed real Monge-Amp\`ere measures. We then make use of it to deduce two elementary, though useful, properties of such operators. Finally, we describe what happens when one of the functions appearing in the mixed integral is the indicator function of a line segment.
\\\hyperref[section about Ronkin functions]{Section \ref*{section about Ronkin functions}} deals with the key object of our work: $v$-adic Ronkin functions of Laurent polynomials, which are introduced and described after recalling the needed preliminaries in tropical and non-archimedean geometry. In this context, the discussion of a notion of minimal boundaries allows to treat the archimedean and non-archimedean cases homogeneously. 
\\In \hyperref[section about Arakelov on toric varieties]{section \ref*{section about Arakelov on toric varieties}} we briefly recall the general adelic Arakelov framework and focus then on the results obtained by Burgos Gil, Philippon and Sombra in the toric setting. To keep the treatment of archimedean and non-archimedean places on equal footing, we rephrase their description of the Chambert-Loir measure of semipositive toric metrized divisors in terms of minimal boundaries of tropical fibers.
\\As a needed step for the main proof, we combinatorially describe the Weil divisor of the rational function defined by a Laurent polynomial on a toric variety. This result can be of independent interest and has then been set aside in \hyperref[section divisor of rational functions]{section \ref*{section divisor of rational functions}}.
\\\hyperref[section hypersurface]{Section \ref*{section hypersurface}} is dedicated to the proofs of our main results \hyperref[toric local height of hypersurfaces]{Theorem \ref*{toric local height of hypersurfaces}} and \hyperref[hypersurfaces are integrable and their global height]{Theorem \ref*{hypersurfaces are integrable and their global height}}, which are formulas for the local height and the toric local height of cycles of codimension $1$ in toric varieties. We then make use of them to prove the integrability statement and a formula for their global heights, with respect to the choice of adelic semipositive toric metrized divisors.
\\For binomial hypersurfaces, such a formula is compatible with the one deduced from \cite{BPS}. This is shown in \hyperref[section about examples]{section \ref*{section about examples}}, where we also apply our results to some other particular cases. We provide convex geometrical formulas for the canonical height of $1$-codimensional cycles, obtaining the quoted result by Maillot, and for the Fubini-Study height of a projective hypersurface. We also propose a new height function, the $\rho$-height, for which we give a compact formula. We do not know any application of such a height, which could be anyway worth studying.
%
%
\vspace{\baselineskip}
\\\textbf{Terminology and notations.} A \emph{variety} $X$ is assumed to be a reduced and irreducible separated scheme of finite type over a field. By an \emph{irreducible hypersurface} in it we mean a closed integral subscheme of codimension $1$ in $X$. A \emph{divisor} on $X$ is a Cartier divisor, unless otherwise stated. Toric varieties are assumed to be normal; whenever the choice of the base field $K$ is clear from the context, the notation $X_\Sigma$ will refer to the toric variety over $K$ associated to the fan $\Sigma$.
\\The term \emph{measure} on a topological space stands for a signed Borel measure on it; in particular, measures admit a well-defined push-forward via continuous mappings. A measure which only takes non-negative real values on Borel subsets is called a \emph{positive measure}.
%
%
\vspace{\baselineskip}
\\\textbf{Acknowledgments.} The author wishes to thank his PhD supervisors Mart\'{\i}n Sombra and Alain Yger for their guidance and many fruitful discussions, as well as C\'esar Mart\'{\i}nez for the interest shown in a preliminary version of this text. Also, he is grateful to the two anonymous referees for the careful reading and the valuable remarks, which highly contributed to improve the quality of the text. The work has been prepared at the Universit\'e de Bordeaux and at the Universitat de Barcelona as part of the author's PhD project, and partially supported by the MINECO research project MTM2015-65361-P and the CNRS project PICS 6381 ``G\'eom\'etrie diophantienne et calcul formel''.

\section{Preliminaries in convex geometry}\label{section about convex geometry}

This section is devoted to recalling notions from convex geometry that will be useful in the sequel. We follow the conventions and notations of \cite[Chapter 2]{BPS}, referring to \cite[\S12]{R} for a more complete treatment of the subject. We refer to these two sources for the proofs of the statements we make here.
\\For the whole section, let $N$ be a lattice of rank $n$ and $M:=\hom(N,\mathbb{Z})$ its dual lattice. Denote by $N_\mathbb{R}=N\otimes_\mathbb{Z}\mathbb{R}$ and by $M_\mathbb{R}=M\otimes_\mathbb{Z}\mathbb{R}$ the corresponding $n$-dimensional real vector spaces.
\\By a \emph{polyhedron} in $N_\mathbb{R}$ we mean a convex subset of $N_\mathbb{R}$ obtained as the intersection of finitely many closed halfspaces $\{u\in N_\mathbb{R}:\langle x,u\rangle+c\geq0\}$, with $x\in M_\mathbb{R}$ and $c\in\mathbb{R}$. If all the slopes $x$ can be chosen in $M$, the polyhedron is said to be \emph{rational}. A \emph{polytope} is a bounded polyhedron. A polytope in $N_\mathbb{R}$ whose vertices all lie in $N$ is called a \emph{lattice polytope}; it is in particular a rational polytope. A compact convex subset of $N_\mathbb{R}$ is called a \emph{convex body}.

\subsection{Legendre-Fenchel duality}

A function $f:N_\mathbb{R}\to\mathbb{R}\cup\{-\infty\}$ is said to be \emph{concave} if it is not identically $-\infty$ and for every $u_1,u_2\in N_\mathbb{R}$ and for every $t\in\left[0,1\right]$, one has the inequality \[f(tu_1+(1-t)u_2)\geq tf(u_1)+(1-t)f(u_2).\] The \emph{effective domain} of a concave function $f$ is the set on which the function takes values different from $-\infty$ and it is denoted by $\dom(f)$: it is a convex subset of $N_\mathbb{R}$. A concave function is said to be \emph{closed} if it is upper semicontinuous. Every concave function with closed effective domain and continuous on it is closed. The \emph{recession function} of a closed concave function $f$ is the concave conical function which takes on $u\in N_\mathbb{R}$ the value \[\rec(f)(u):=\lim_{\lambda\to\infty}\frac{f(v_0+\lambda u)}{\lambda}\in\mathbb{R}\cup\{-\infty\},\] for any $v_0\in\dom(f)$, see \cite[Theorem 8.5]{R}. Finally, a concave function $f$ with effective domain a polyhedron in $N_\mathbb{R}$ is \emph{piecewise affine} if \[f(u)=\min_{\alpha\in S}(\langle\alpha,u\rangle+c_\alpha)\] for every $u\in\dom(f)$, with $S$ a finite subset of $M_\mathbb{R}$ and $c_\alpha\in\mathbb{R}$ for every $\alpha\in S$. 
\\To each concave function $f$ on $N_\mathbb{R}$, one can associate its \emph{Legendre-Fenchel dual}, which is the closed concave function $f^\vee$ on $M_\mathbb{R}$ defined as \[f^\vee(x):=\inf_{u\in N_\mathbb{R}}(\langle x,u\rangle-f(u)),\]for every $x\in M_\mathbb{R}$, see \cite[\S 2.2]{BPS}. If $f$ is closed, $(f^\vee)^\vee=f$. The effective domain of $f^\vee$ is a convex subset of $M_\mathbb{R}$, which one calls the \emph{stability set of $f$} and denotes by $\stab(f)$.
\\The following example is classical and will play a role later on.

\begin{ex}\label{indicator and support function}
Any nonempty convex body $B$ in $M_\mathbb{R}$ induces a concave function $\Psi_B$ on $N_\mathbb{R}$, called the \emph{support function} of $B$ and defined as \[\Psi_B(u):=\min_{x\in B}\langle x,u\rangle\]for every $u\in N_\mathbb{R}$. Its Legendre-Fenchel dual is the \emph{indicator function $\iota_B$ of $B$}, which is the function taking the value $0$ on $B$ and $-\infty$ elsewhere. Hence, $\dom(\Psi_B)=N_\mathbb{R}$ and $\stab(\Psi_B)=B$. Notice that, whenever $B$ is a polytope, $\Psi_B$ is a conic piecewise affine concave function.
\end{ex}

We also recall that there exists a number of operations that one can define on concave functions, in addition to the usual pointwise sum and scalar multiplication. Among these, the \emph{sup-convolution} of two concave functions $f$ and $g$ on $N_\mathbb{R}$ with non-disjoint stability sets is defined as
\[(f\boxplus g)(v):=\sup_{u_1+u_2=v}(f(u_1)+g(u_2))\]
and the \emph{right scalar multiplication} of $f$ by $\lambda\in\mathbb{R}_{\geq0}$ as
\[(f\lambda)(u):=\lambda f(u/\lambda).\]
Also, the \emph{translate} of a concave function $f$ on $N_\mathbb{R}$ by a point $u_0\in N_\mathbb{R}$ is set to be \[(\tau_{u_0}f)(u):=f(u-u_0).\]These operations are dual, via Legendre-Fenchel duality, to the usual pointwise addition, scalar multiplication and sum by a linear function, respectively, see \cite[Proposition 2.3.1 and Proposition 2.3.3]{BPS}.

\subsection{Real Monge-Amp\`ere measures}\label{subsection about real Monge-Ampere measures}

For any closed concave function $f$ on $N_\mathbb{R}$ with $\dom(f)=N_\mathbb{R}$ and for any Haar measure $\mu$ on $M_\mathbb{R}$, one can define a corresponding \emph{real Monge-Amp\`ere measure} $\mathcal{M}_\mu(f)$, as in \cite[\S2.7]{BPS}. It is a measure on $N_\mathbb{R}$, of total mass $\mu(\stab(f))$, being supported on finitely many points if $f$ is piecewise affine. The Monge-Amp\`ere operator, associating to each closed concave function $f$ with $\dom(f)=N_\mathbb{R}$ the corresponding measure $\mathcal{M}_\mu(f)$ on $N_\mathbb{R}$ is homogeneous of degree $n$ with respect to pointwise scalar multiplication. It was shown in \cite{PR} that such an operator admits a polarization: for $f_1,\dots,f_n$ closed concave functions with effective domain $N_\mathbb{R}$, their \emph{mixed real Monge-Amp\`ere measure} is defined as the measure
\begin{equation}\label{mixed Monge-Ampere measure}
\MM_\mu(f_1,\dots,f_n):=\sum_{k=1}^n(-1)^{n-k}\sum_{1\leq i_1<\dots<i_k\leq n}\mathcal{M}_\mu(f_{i_1}+\dots+f_{i_k}).
\end{equation}
Notice that this definition differs from \cite[formula (14)]{PR} and \cite[Definition 2.7.12]{BPS} by a multiplicative constant. It follows from \cite[\S5]{PR} that the measure in \eqref{mixed Monge-Ampere measure} is in fact a positive measure. The so obtained mixed Monge-Amp\`ere operator is, by definition, symmetric and multilinear in its entries (with respect to pointwise sum) and it satisfies \[\MM_\mu(f,\dots,f)=n!\mathcal{M}_\mu(f)\] for every closed concave function $f$. In particular, if $f_1,\dots,f_n$ are closed concave functions with convex bodies as stability sets, their mixed real Monge-Amp\`ere measure is a finite measure on $N_\mathbb{R}$ of total mass $\MV_\mu(\stab(f_1),\dots,\stab(f_n))$. Here, $\MV_\mu$ denotes the mixed volume of convex bodies with respect to the measure $\mu$, normalized in such a way that $\MV_\mu(Q,\dots,Q)=n!\mu(Q)$ for every convex body $Q$ in $M_\mathbb{R}$.

\begin{rem}\label{lattice volume}
The integral structure on $M_\mathbb{R}$ coming from the subjacent lattice $M$ gives a distinguished measure $\vol_M$ on $M_\mathbb{R}$, which is the unique Haar measure for which the volume of a fundamental domain of $M$ is $1$ (this does not depend on the choice of a basis of $M$). To lighten the notation, the corresponding (mixed) real Monge-Amp\`ere operator will be denoted by $\MM_M$.
\end{rem}

\subsection{Mixed integrals}\label{subsection about mixed integrals}

For any Haar measure $\mu$ on $M_\mathbb{R}$, the application mapping a compactly supported concave function $g$ on $M_\mathbb{R}$ to its Lebesgue integral with respect to $\mu$ is homogeneous of degree $(n+1)$ with respect to right scalar multiplication. As a consequence of \cite[Proposition 4.5]{PS1}, there exists a polarized operator: for $g_0,\dots,g_n$, concave functions on $M_\mathbb{R}$ with respective domains the convex bodies $Q_0,\dots,Q_n$, their \emph{mixed integral} with respect to $\mu$ is defined as \[\MI_\mu(g_0,\dots,g_n):=\sum_{k=0}^n(-1)^{n-k}\sum_{0\leq i_0<\dots<i_k\leq n}\int_{Q_{i_0}+\dots+Q_{i_k}}(g_{i_0}\boxplus\dots\boxplus g_{i_k})\ d\mu.\]
This notion was introduced and studied in \cite{PS1} and \cite{PS2}. The mixed integral operator is symmetric and multilinear in its entries (with respect to sup-convolution) and it satisfies \[\MI_\mu(g,\dots,g)=(n+1)!\int_Q g\ d\mu\] for every concave function $g$ on a convex body $Q$. As for the mixed Monge-Amp\`ere operator, we will denote by $\MI_M$ the mixed integral computed with respect to the Haar measure $\vol_M$ on $M_\mathbb{R}$.

\vspace{\baselineskip}
The following proposition describes the behaviour of mixed integrals with respect to translation of the entries.

\begin{proposition}\label{translation in mixed integral does not change the value}
Let $g_i$ be a concave function defined on a convex body $Q_i$ in $M_\mathbb{R}$, for $i=0,\dots,n$. Let also $x_0\in M_\mathbb{R}$. Then, \[\MI_\mu(\tau_{x_0}g_0,g_1,\dots,g_n)=\MI_\mu(g_0,\dots,g_n).\]
\end{proposition}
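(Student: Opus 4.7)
The plan is to argue directly from the definition of the mixed integral, exploiting two ingredients: the good behaviour of sup-convolution under translation, and the translation invariance of the Haar measure $\mu$. I expect no serious obstacle here, as the translation can be pushed through the sup-convolutions in each term of the defining formula, after which a change of variables inside each integral reduces matters to the untranslated case.

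First I would record the following elementary fact about sup-convolution: for any concave functions $h_1,\dots,h_k$ on $M_\mathbb{R}$ with convex bodies as effective domains and for any $x_0\in M_\mathbb{R}$, one has
\[(\tau_{x_0}h_1)\boxplus h_2\boxplus\cdots\boxplus h_k=\tau_{x_0}\bigl(h_1\boxplus h_2\boxplus\cdots\boxplus h_k\bigr),\]
with common effective domain $x_0+\dom(h_1)+\cdots+\dom(h_k)$. This is immediate from the definition of sup-convolution by the change of variable $x_1'=x_1-x_0$ in the supremum; by symmetry, it does not matter in which argument the translation sits.

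With this in hand, I would expand $\MI_\mu(\tau_{x_0}g_0,g_1,\dots,g_n)$ according to the defining formula and split the sum into two families of summands. On the one hand, any summand indexed by a tuple $0\leq i_0<\cdots<i_k\leq n$ that does not contain the index $0$ does not involve $g_0$, hence is identical to the corresponding summand of $\MI_\mu(g_0,\dots,g_n)$. On the other hand, any summand containing the index $0$ equals
\[\int_{x_0+Q_0+Q_{i_1}+\cdots+Q_{i_k}}\tau_{x_0}\bigl(g_0\boxplus g_{i_1}\boxplus\cdots\boxplus g_{i_k}\bigr)\,d\mu\]
by the lemma above; by the translation invariance of the Haar measure $\mu$, this integral coincides with the integral of $g_0\boxplus g_{i_1}\boxplus\cdots\boxplus g_{i_k}$ over $Q_0+Q_{i_1}+\cdots+Q_{i_k}$, which is precisely the matching summand in $\MI_\mu(g_0,\dots,g_n)$. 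Summing all summands with the signs dictated by the definition yields the claimed equality.
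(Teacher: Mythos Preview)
Your proof is correct and follows essentially the same approach as the paper: both arguments use the identity $(\tau_{x_0}g_0)\boxplus g_{i_1}\boxplus\cdots\boxplus g_{i_r}=\tau_{x_0}(g_0\boxplus g_{i_1}\boxplus\cdots\boxplus g_{i_r})$ and then apply the translation invariance of the Haar measure to match the integrals in the defining formula termwise. Your version is slightly more explicit in separating the summands that do and do not involve the index $0$, but the substance is the same.
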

\begin{proof}
For any subset $\{i_1,\dots,i_r\}\subseteq\{1,\dots,n\}$ one has, directly by definition, that\[((\tau_{x_0}g_0)\boxplus g_{i_1}\boxplus\dots\boxplus g_{i_r})(x+x_0)=(g_0\boxplus g_{i_1}\boxplus\dots\boxplus g_{i_r})(x)\]for every $x\in M_\mathbb{R}$. The change of variables formula implies then that the integrals appearing in the definitions of $\MI_\mu(\tau_{x_0}g_0,g_1,\dots,g_n)$ and of $\MI_\mu(g_0,\dots,g_n)$ are pairwise equal, from which the statement follows trivially.
\end{proof}

We now focus on a recursive formula for mixed integrals. For any closed convex subset $C$ in $M_\mathbb{R}$, and for every $u\in N_\mathbb{R}$, one can consider the subset \[C^u:=\bigg\{x\in C:\langle x,u\rangle=\inf_{y\in C}\langle y,u\rangle\bigg\}\]of $C$. For $u\neq0$, this subset is contained in an affine subspace of $M_\mathbb{R}$ of codimension 1 and parallel to $u^\perp:=\{x\in M_\mathbb{R}:\langle x,u\rangle=0\}$. It is immediate from the definition that for every pair of convex bodies $C_1$ and $C_2$ in $M_\mathbb{R}$, and for every $u\in N_\mathbb{R}$, $C_1^u+C_2^u=(C_1+C_2)^u$, where the plus sign denotes the Minkowski sum of convex sets. When $u\in N_\mathbb{Q}\setminus\{0\}$, the intersection $M(u):=M\cap u^\perp$ is a lattice of rank $n-1$ spanning the linear space $u^\perp$ and hence it induces a normalized Haar measure $\vol_{M(u)}$ on $u^\perp$, as in \hyperref[lattice volume]{Remark \ref*{lattice volume}}.

\begin{rem}
Let $B_1,\dots, B_n$ be $n$ convex bodies in $M_\mathbb{R}$ and let $u\in N_\mathbb{R}$. The invariancy under translation of $\vol_{M(u)}$ allows to consistently consider the mixed volume of $B_1^u,\dots,B_n^u$ as convex bodies in $u^\perp$.
\\Similarly, let $g_0,\dots,g_n$ be concave functions defined on convex bodies $B_0,\dots,B_n$ in $M_\mathbb{R}$, respectively. For every $u\in N_\mathbb{Q}\setminus\{0\}$, \hyperref[translation in mixed integral does not change the value]{Proposition \ref*{translation in mixed integral does not change the value}} allows to consider the mixed integral with respect to $\vol_{M(u)}$ of $g_0|_{B_0^u},\dots,g_n|_{B_n^u}$, as functions defined on convex subsets of $u^\perp$.
\end{rem}

For a concave function $g$ with effective domain a polytope $Q$ in $M_\mathbb{R}$, its \emph{hypograph} is the closed convex set \[\Gamma(g):=\{(x,t):x\in Q, t\leq g(x)\}\subseteq M_\mathbb{R}\times\mathbb{R}.\]
With the pairing between $N_\mathbb{R}\times\mathbb{R}$ and $M_\mathbb{R}\times\mathbb{R}$ given by $\langle(x,t),(u,\lambda)\rangle:=\langle x,u\rangle+t\lambda$ for every $(u,\lambda)\in N_\mathbb{R}\times\mathbb{R}$ and $(x,t)\in M_\mathbb{R}\times\mathbb{R}$, one can consider the subset $\Gamma(g)^{(u,\lambda)}$ of $\Gamma(g)$ for every $(u,\lambda)\in N_\mathbb{R}\times\mathbb{R}$. It is empty when $\lambda>0$.
\\The mixed real Monge-Amp\`ere measure of piecewise affine concave functions can be made explicit in terms of the hypographs of their Legendre-Fenchel duals. We denote by $\delta_v$ the Dirac measure supported on $v$.

\begin{proposition}\label{real Monge-Ampere measure of piecewise affine functions}
For $i=1,\dots,n$, let $g_i$ be a piecewise affine concave function with effective domain a polytope $Q_i\subset M_\mathbb{R}$, $\Gamma_i$ the hypograph of $g_i$. Denote by $\pi:M_\mathbb{R}\times\mathbb{R}\to M_\mathbb{R}$ the projection onto the first factor. For any choice of a Haar measure $\mu$ on $M_\mathbb{R}$ one has \[\MM_\mu\big(g_1^\vee,\dots,g_n^\vee\big)=\sum_{v\in N_\mathbb{R}}\MV_\mu\bigg(\pi\Big(\Gamma_1^{(v,-1)}\Big),\dots,\pi\Big(\Gamma_n^{(v,-1)}\Big)\bigg)\ \delta_v,\] and the sum is finite.
\end{proposition}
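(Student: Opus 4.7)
The plan is to compute each measure $\mathcal{M}_\mu(g_{i_1}^\vee+\dots+g_{i_k}^\vee)$ appearing in the defining formula \eqref{mixed Monge-Ampere measure} of $\MM_\mu$ as a finite sum of Dirac masses, substitute the results, and recognize the resulting coefficient of each $\delta_v$ as the polarization formula for the mixed volume.

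First I would identify $\pi(\Gamma_i^{(v,-1)})$ with the superdifferential of $g_i^\vee$ at $v$. Since $t\le g_i(x)$ on $\Gamma_i$, one has
\[
\inf_{(x,t)\in\Gamma_i}\bigl(\langle x,v\rangle-t\bigr)=\inf_{x\in Q_i}\bigl(\langle x,v\rangle-g_i(x)\bigr)=g_i^\vee(v),
\]
so $\pi(\Gamma_i^{(v,-1)})=\{x\in Q_i:\langle x,v\rangle-g_i(x)=g_i^\vee(v)\}$, which by the standard Legendre--Fenchel correspondence equals $\partial g_i^\vee(v)$. For any nonempty $I=\{i_1<\dots<i_k\}\subseteq\{1,\dots,n\}$, the hypograph of the sup-convolution $g_I:=g_{i_1}\boxplus\dots\boxplus g_{i_k}$ is the Minkowski sum $\Gamma_{i_1}+\dots+\Gamma_{i_k}$, and by the duality between sup-convolution and pointwise addition (\cite[Proposition~2.3.1]{BPS}) its Legendre--Fenchel dual is $g_{i_1}^\vee+\dots+g_{i_k}^\vee$. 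Combined with the face identity $(C_1+C_2)^u=C_1^u+C_2^u$, this yields
\[
\pi\bigl(\Gamma(g_I)^{(v,-1)}\bigr)=\pi(\Gamma_{i_1}^{(v,-1)})+\dots+\pi(\Gamma_{i_k}^{(v,-1)}).
\]

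Next I would invoke the standard fact that for a piecewise affine closed concave function $h$ on $N_\mathbb{R}$ with polytope stability set, the real Monge-Amp\`ere measure is $\mathcal{M}_\mu(h)=\sum_{v}\mu(\partial h(v))\,\delta_v$, where $v$ ranges over the finite set of vertices of the polyhedral decomposition of $N_\mathbb{R}$ induced by $h$. Applied to $h=g_I^\vee$ this expresses each term in \eqref{mixed Monge-Ampere measure} as a finite sum of Dirac masses. Substituting and switching the order of summation, the coefficient of $\delta_v$ in $\MM_\mu(g_1^\vee,\dots,g_n^\vee)$ becomes
\[
\sum_{k=1}^n(-1)^{n-k}\sum_{1\le i_1<\dots<i_k\le n}\mu\bigl(\pi(\Gamma_{i_1}^{(v,-1)})+\dots+\pi(\Gamma_{i_k}^{(v,-1)})\bigr),
\]
which is precisely the polarization identity for $\MV_\mu\bigl(\pi(\Gamma_1^{(v,-1)}),\dots,\pi(\Gamma_n^{(v,-1)})\bigr)$ in the normalization fixed in the paper. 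Finiteness is automatic, since $\MM_\mu(g_1^\vee,\dots,g_n^\vee)$ is a signed combination of measures each supported on a finite set.

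The main obstacle I foresee is the careful bookkeeping of Legendre--Fenchel dualities, in particular establishing the identification $\pi(\Gamma_i^{(v,-1)})=\partial g_i^\vee(v)$ and its Minkowski-additivity under sup-convolution of the primal functions. Once those two identifications are in place, the argument reduces to matching the two symmetric polarization expansions defining $\MM_\mu$ and $\MV_\mu$.
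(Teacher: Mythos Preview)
Your proposal is correct and follows essentially the same route as the paper: identify $\pi(\Gamma_i^{(v,-1)})$ with the set $v^*=\partial g_i^\vee(v)$ via the Legendre--Fenchel relation, use the hypograph identity $\Gamma(g_{i_1}\boxplus\dots\boxplus g_{i_k})=\Gamma_{i_1}+\dots+\Gamma_{i_k}$ together with the duality between sup-convolution and pointwise sum (\cite[Proposition~2.3.1]{BPS}) and the face identity to compute each $\mathcal{M}_\mu(g_{i_1}^\vee+\dots+g_{i_k}^\vee)$, and then rearrange the polarization expansion \eqref{mixed Monge-Ampere measure} into that of the mixed volume. The only cosmetic difference is that the paper cites \cite[Proposition~2.7.4]{BPS} for the formula $\mathcal{M}_\mu(g^\vee)=\sum_v\mu(v^*)\delta_v$ and \cite[\S2]{AW} for the hypograph--Minkowski sum relation, where you invoke these as standard facts.
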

\begin{proof}
For a piecewise affine concave function $g$ with bounded domain in $M_\mathbb{R}$, its Legendre-Fenchel dual is a piecewise affine concave function with domain $N_\mathbb{R}$ and \cite[Proposition 2.7.4]{BPS} affirms that \[\mathcal{M}_\mu\big(g^\vee\big)=\sum_{v\in N_\mathbb{R}}\mu(v^*)\delta_v,\]with $v^*=\{x\in M_\mathbb{R}: g^\vee(v)=\langle x,v\rangle-g(x)\}$. From the definition of the Legendre-Fenchel duality, one has hence that
\begin{equation*}
\begin{split}
v^*&=\bigg\{x\in M_\mathbb{R}:\langle x,v\rangle-g(x)=\min_{y\in M_\mathbb{R}}(\langle y,v\rangle-g(y))\bigg\}\\&=\Big\{x\in M_\mathbb{R}:(x,g(x))\in\Gamma(g)^{(v,-1)}\Big\},
\end{split}
\end{equation*}
and so \[\mathcal{M}_\mu\big(g^\vee\big)=\sum_{v\in N_\mathbb{R}}\mu\Big(\pi\Big(\Gamma(g)^{(v,-1)}\Big)\Big)\delta_v.\]
The sum is moreover supported on finitely many $v\in N_\mathbb{R}$, corresponding to the directions of the finitely many exposed faces of $\Gamma(g)$.
\\By \cite[\S2]{AW}, the relation \[\Gamma(g_i\boxplus g_j)=\Gamma(g_i)+\Gamma(g_j)\] on the hypographs of $g_i$ and $g_j$ holds for any $i,j\in\{1,\dots,n\}$. As a consequence, for every subset $\{i_1,\dots,i_k\}\subseteq\{1,\dots,n\}$, \cite[Proposition 2.3.1]{BPS} and the linearity of $\pi$ yield
\begin{equation*}
\begin{split}
\mathcal{M}_\mu\big(g_{i_1}^\vee+\dots+g_{i_k}^\vee\big)&=\mathcal{M}_\mu\big((g_{i_1}\boxplus\dots\boxplus g_{i_k})^\vee\big)\\&=\sum_{v\in N_\mathbb{R}}\mu\Big(\pi\Big(\Gamma_{i_1}^{(v,-1)}+\dots+\Gamma_{i_k}^{(v,-1)}\Big)\Big)\delta_v\\&=\sum_{v\in N_\mathbb{R}}\mu\Big(\pi\Big(\Gamma_{i_1}^{(v,-1)}\Big)+\dots+\pi\Big(\Gamma_{i_k}^{(v,-1)}\Big)\Big)\delta_v,
\end{split}
\end{equation*}
and the sum is finite. The statement follows then from the definition of the mixed real Monge-Amp\`ere measure, rearranging the terms.
\end{proof}

We can now prove a recursive formula relating the notions of the mixed real Monge-Amp\`ere measure and the mixed integral of concave functions, via Legendre-Fenchel duality. A vector $u\in N$ is said to be \emph{primitive} if it is nonzero and there is no other element $u^\prime\in N$ such that $ku^\prime=u$ for some positive integer $k$.

\begin{theorem}\label{recursive formula with mixed integral and mixed real Monge-Ampere measure}
For $i=0,\dots,n$, let $g_i$ be a continuous concave function on a rational polytope $Q_i$ in $M_\mathbb{R}$. Then 
\begin{multline*}
\MI_M(g_0,\dots,g_n)=-\sum_{\substack{u\in N\\\primitive}}\Psi_{Q_0}(u)\MI_{M(u)}\big(g_1|_{Q_1^{u}},\dots,g_n|_{Q_n^{u}}\big)\\-\int_{N_\mathbb{R}}g_0^\vee\ d\MM_{M}\big(g_1^\vee,\dots,g_n^\vee\big),
\end{multline*}
the first sum being finite.
\\In particular, if $g_i$ is a piecewise affine concave function on $Q_i$ with hypograph $\Gamma_i$ for any $i=0,\dots,n$, denoting by $\pi:M_\mathbb{R}\times\mathbb{R}\to M_\mathbb{R}$ the projection onto the first factor, one has
\begin{multline*}
\MI_M(g_0,\dots,g_n)=-\sum_{\substack{u\in N\\\primitive}}\Psi_{Q_0}(u)\MI_{M(u)}\big(g_1|_{Q_1^{u}},\dots,g_n|_{Q_n^{u}}\big)\\-\sum_{v\in N_\mathbb{R}}g_0^\vee(v)\MV_{M}\bigg(\pi\Big(\Gamma_1^{(v,-1)}\Big),\dots,\pi\Big(\Gamma_n^{(v,-1)}\Big)\bigg).
\end{multline*}
\end{theorem}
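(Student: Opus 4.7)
The plan is to reduce to the piecewise affine case and then apply the recursive formula of Philippon--Sombra, repackaging the interior contribution as an integral against a mixed Monge--Amp\`ere measure by means of the proposition on piecewise affine functions. In detail:

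First I would establish the ``in particular'' piecewise affine version of the statement, and then deduce the general continuous one by approximation. The rationale is that the proposition on real Monge--Amp\`ere measures of piecewise affine functions provides a direct combinatorial expression for $\MM_M(g_1^\vee,\dots,g_n^\vee)$ as a finite positive combination of Dirac masses; thus the two displayed formulas in the statement are equivalent in the piecewise affine regime, and the integral form of the second term is really a compact reformulation of the finite sum.

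For the piecewise affine case, I would argue as follows. The Legendre--Fenchel dual $g_0^\vee$ is a piecewise affine concave function on $N_\mathbb{R}$ whose recession function is exactly $-\Psi_{Q_0}$ (up to sign conventions) and whose ``interior'' breakpoints record the vertices of $\Gamma(g_0)$. Apply the recursive formula for mixed integrals from \cite[Proposition~4.5]{PS1} with respect to the distinguished entry $g_0$. That recursion writes $\MI_M(g_0,g_1,\dots,g_n)$ as a boundary contribution, indexed by the primitive facet normals of $Q_0+\cdots+Q_n$, plus an interior contribution. Using that $(Q_1+\cdots+Q_n)^u = Q_1^u+\cdots+Q_n^u$ and the multilinearity of the mixed integral under sup-convolution on each face $u^\perp$, the boundary contribution becomes exactly
\[
-\sum_{\substack{u\in N\\\primitive}}\Psi_{Q_0}(u)\,\MI_{M(u)}\bigl(g_1|_{Q_1^u},\dots,g_n|_{Q_n^u}\bigr),
\]
where only finitely many $u$ contribute, namely those that are primitive rays of the common refinement of the normal fans of $Q_1,\dots,Q_n$ in the support of $\Psi_{Q_0}$. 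The interior contribution matches a finite sum
\[
-\sum_{v\in N_\mathbb{R}}g_0^\vee(v)\,\MV_{M}\bigl(\pi(\Gamma_1^{(v,-1)}),\dots,\pi(\Gamma_n^{(v,-1)})\bigr),
\]
after identifying vertices of $\Gamma(g_0)$ with their duals via the pairing $\langle(x,t),(v,-1)\rangle=\langle x,v\rangle-t$; this is the piecewise affine formula. The previous proposition then immediately converts this finite sum into $-\int_{N_\mathbb{R}}g_0^\vee\,d\MM_M(g_1^\vee,\dots,g_n^\vee)$, since the mixed Monge--Amp\`ere measure of piecewise affine duals is precisely the claimed finite weighted sum of Dirac masses.

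For the general continuous case, I would approximate each $g_i$ uniformly on $Q_i$ by a sequence of piecewise affine concave functions $g_i^{(k)}$ (for instance via sup-convolution with tent functions, or by linear interpolation over a refining rational triangulation of $Q_i$). The mixed integral operator is continuous under uniform convergence of its entries on fixed polytopes, so the left-hand side passes to the limit; on the right-hand side, each $g_i^{(k)}|_{Q_i^u}$ still converges uniformly on $Q_i^u$ to $g_i|_{Q_i^u}$, yielding convergence of the boundary term term-by-term (only finitely many $u$ are involved since $Q_0,\dots,Q_n$ are rational). For the remaining term, Legendre--Fenchel duality exchanges uniform convergence on compacta with uniform convergence of the duals on compacta, and the mixed Monge--Amp\`ere measure depends weakly continuously on its entries (cf.\ \cite[\S2.7]{BPS}) while keeping a uniformly bounded total mass equal to $\MV_M(Q_1,\dots,Q_n)$; hence $\int_{N_\mathbb{R}}g_0^\vee\,d\MM_M(g_1^\vee,\dots,g_n^\vee)$ is stable under the limit.

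The main obstacle I expect is bookkeeping: correctly matching the indexing between the Philippon--Sombra recursion (which is phrased in terms of facets of a Minkowski sum) and the formula above (phrased in terms of the primitive rays in the normal fan of $Q_0$ weighted by $\Psi_{Q_0}$), and tracking the sign carefully so that the minus signs in front of both terms are justified. A secondary subtlety is to ensure the first sum is genuinely finite in the continuous case without the hypographs being polytopes: this uses that $\Psi_{Q_0}$ is a conic piecewise affine function whose non-smooth locus is a union of finitely many rational cones, together with $Q_i^u$ being a proper face of $Q_i$ only for primitive $u$ in the normal fan of $Q_0+\cdots+Q_n$.
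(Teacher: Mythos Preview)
Your proposal is correct and follows essentially the same route as the paper: reduce to the piecewise affine case by uniform approximation, invoke the Philippon--Sombra recursion (note: this is \cite[Proposition~8.5]{PS2}, not \cite[Proposition~4.5]{PS1}, which only establishes the polarization), and repackage the interior term via the preceding proposition on mixed Monge--Amp\`ere measures of piecewise affine duals. The bookkeeping you anticipate is exactly what the paper carries out, in particular tracking the normalization factors $\|u\|$ and $\|(v,-1)\|$ when comparing the Euclidean restriction of $\vol_M$ to $u^\perp$ (resp.\ $(v,-1)^\perp$) with the lattice measure $\vol_{M(u)}$ (resp.\ with $\vol_M$ after projecting via $\pi$).
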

\begin{proof}
By \cite[Proposition 2.5.23 (1)]{BPS}, any continuous concave function on a polytope can be approximated, with respect to uniform convergence, by a sequence of piecewise affine concave functions on the polytope itself. On the other hand, the Legendre-Fenchel duality and the real Monge-Amp\`ere operator are continuous with respect to uniform limits of concave functions, see \cite[Proposition 2.2.3]{BPS} and \cite[\S3]{RT}, respectively. It is not difficult to show that the same holds for mixed integrals. Thanks to \hyperref[real Monge-Ampere measure of piecewise affine functions]{Proposition \ref*{real Monge-Ampere measure of piecewise affine functions}}, it is hence enough to prove the formula in the particular case of $g_0,\dots,g_n$ being piecewise affine concave functions.
\\Let hence $g_i$ be a concave piecewise affine function on the rational polytope $Q_i$ in $M_\mathbb{R}$, $\Gamma_i$ its hypograph, for $i=0,\dots,n$. The choice of a basis of $N$ (and of the dual basis of $M$) endows $N_\mathbb{R}$ and $M_\mathbb{R}$ with an euclidean structure, allowing to consider the sets \[\mathbb{S}^{n-1}:=\{w\in N_\mathbb{R}:\|w\|=1\}\subseteq N_\mathbb{R}\]and\[\mathbb{S}^n_-:=\{(v,t)\in N_\mathbb{R}\times\mathbb{R}:\|(v,t)\|=1,t<0\}\subseteq N_\mathbb{R}\times\mathbb{R}.\] After a change of sign due to the use of a different notation, \cite[Proposition 8.5]{PS2} affirms that
\begin{multline}\label{equation in the proof of the recursive formula for mixed integrals}
\MI_M(g_0,\dots,g_n)=-\sum_{w\in\mathbb{S}^{n-1}}\Psi_{Q_0}(w)\MI_{n-1}\big(g_1|_{Q_1^w},\dots,g_n|_{Q_n^w}\big)\\-\sum_{r\in\mathbb{S}^n_-}\Psi_{\Gamma_0}(r)\ \MV_n(\Gamma^r_1,\dots,\Gamma^r_n),
\end{multline}
where, on the right hand side, one refers to the mixed integral with respect to the measure obtained restricting $\vol_M$ to $w^\perp$ and to the mixed volume with respect to the restriction of $\vol_{M\oplus\mathbb{Z}}$ to $r^\perp$.
\\Concerning the first sum on the right hand side of \eqref{equation in the proof of the recursive formula for mixed integrals}, if a term in the sum is different from zero, then there exists a subset $I\subset\{1,\dots,n\}$ such that the Minkowski sum of $Q_i^w$, with $i\in I$, is of dimension $n-1$; in particular, denoting $Q:=Q_1+\dots+Q_n$, $Q^w=Q_1^w+\dots+Q_n^w$ needs to be of dimension $n-1$. As a consequence, one can restrict the sum to the set of vectors $w\in\mathbb{S}^{n-1}$ for which $Q^w$ is a $(n-1)$-dimensional face of $Q$. This set is included in the set of vectors of unitary length which are perpendicular to a $(n-1)$-dimensional face of $Q$, hence it is finite since $Q$ is a polytope. Moreover, since $Q$ is rational, the ray spanned by such a vector $w$ contains a unique primitive vector $u\in N$. The linearity of $\Psi_{Q_0}$ yields hence the equality
\[\sum_{w\in\mathbb{S}^{n-1}}\Psi_{Q_0}(w)\MI_{n-1}\big(g_1|_{Q_1^w},\dots,g_n|_{Q_n^w}\big)=\sum_{\substack{u\in N\\\primitive}}\frac{\Psi_{Q_0}(u)}{\|u\|}\MI_{n-1}\big(g_1|_{Q_1^u},\dots,g_n|_{Q_n^u}\big).
\]
The fact that the restriction of $\vol_M$ to $u^\perp$ is equal to the measure $\vol_{M(u)}$ multiplied by $\|u\|$, see \cite[proof of Corollary 2.7.10]{BPS}, allows to conclude that the first sum in \eqref{equation in the proof of the recursive formula for mixed integrals} coincides with the desired one.
\\Regarding the second sum in \eqref{equation in the proof of the recursive formula for mixed integrals}, there exists an obvious bijection between $\mathbb{S}^n_-$ and $N_\mathbb{R}$ given by associating to each $r\in\mathbb{S}^n_-$ the only vector $v\in N_\mathbb{R}$ such that $(v,-1)$ lies on the line spanned by $r$. Hence,
\[\sum_{r\in\mathbb{S}^n_-}\Psi_{\Gamma_0}(r)\ \MV_n(\Gamma^r_1,\dots,\Gamma^r_n)=\sum_{v\in N_\mathbb{R}}\frac{\Psi_{\Gamma_0}(v,-1)}{\|(v,-1)\|}\ \MV_n\Big(\Gamma^{(v,-1)}_1,\dots,\Gamma^{(v,-1)}_n\Big).\]Directly by the definition of the Legendre-Fenchel duality, one has that $\Psi_{\Gamma_0}(v,-1)=g_0^\vee(v)$. The statement follows then from the fact that for every Borel set $E$ in $(v,-1)^\perp$, the measure of $E$ with respect to the restriction of $\vol_{M\oplus\mathbb{Z}}$ to $(v,-1)^\perp$ equals $\|(v,-1)\|\cdot\vol_M(\pi(E))$, again by \cite[proof of Corollary 2.7.10]{BPS}.
\end{proof}

\begin{rem}\label{recursive formula with sum over the faces}
For a rational polytope $P$ of full dimension $n$ in $M_\mathbb{R}$, every facet $F$ of $P$, that is a face of dimension $n-1$, admits a distinguished orthogonal vector: it is the unique primitive vector $v_F\in N$ which satisfies $P^{v_F}=F$. Under the additional assumption that the Minkowski sum $Q:=Q_1+\dots+Q_n$ is of dimension $n$ in $M_\mathbb{R}$, the formula in \hyperref[recursive formula with mixed integral and mixed real Monge-Ampere measure]{Theorem \ref*{recursive formula with mixed integral and mixed real Monge-Ampere measure}} can be written as
\begin{multline*}
\MI_M(g_0,\dots,g_n)=-\sum_{F}\Psi_{Q_0}(v_F)\MI_{M(v_F)}\big(g_1|_{Q_1^{v_F}},\dots,g_n|_{Q_n^{v_F}}\big)\\-\int_{N_\mathbb{R}}g_0^\vee\ d\MM_{M}\big(g_1^\vee,\dots,g_n^\vee\big),
\end{multline*}
the first sum being over the finite set of facets of the polytope $Q$. Indeed, in such a situation the application $F\mapsto v_F$ realizes a bijection between the set of facets of $Q$ and the set of primitive vectors $u\in N$ for which $Q^u$ is a $(n-1)$-dimensional face of $Q$, which are the only vectors for which the term of the sum in the statement of the theorem does not vanish.
\end{rem}

\begin{rem}\label{remark for the Legendre-Fenchel dual version of the recursive formula for mixed integrals}
The statement of \hyperref[recursive formula with mixed integral and mixed real Monge-Ampere measure]{Theorem \ref*{recursive formula with mixed integral and mixed real Monge-Ampere measure}} can be reformulated in terms of Legendre-Fenchel duality. For $i=0,\dots,n$, let $f_i$ be a concave function on $N_\mathbb{R}$ with stability set a rational polytope $Q_i$ in $M_\mathbb{R}$. Under the assumption that $Q_1+\dots+Q_n$ is of dimension $n$ in $M_\mathbb{R}$, \hyperref[recursive formula with sum over the faces]{Remark \ref*{recursive formula with sum over the faces}} yields
\begin{multline}\label{version of recursive formula in terms of Legendre-Fenchel duals}
\MI_M(f^\vee_0,\dots,f^\vee_n)=-\sum_{F}\Psi_{Q_0}(v_F)\MI_{M(v_F)}\big(f^\vee_1|_{Q_1^{v_F}},\dots,f^\vee_n|_{Q_n^{v_F}}\big)\\-\int_{N_\mathbb{R}}f_0\ d\MM_{M}\big(f_1,\dots,f_n\big).
\end{multline}
Indeed, it is sufficient to readily apply the previous theorem to the functions $f_0^\vee,\dots,f_n^\vee$, which are continuous on their domain and satisfy the equality $(f_i^\vee)^\vee=f_i$ for each $i=0,\dots,n$ by concavity and closedness. It is easy to verify that the choice $f_0=\dots=f_n=f$ in \eqref{version of recursive formula in terms of Legendre-Fenchel duals} yields the formula in \cite[Corollary 2.7.10]{BPS}.
\end{rem}

We present now two applications of the recursive formula proved above. The first one concerns the computation of the mixed integral when all except one entry are indicator functions in the sense of \hyperref[indicator and support function]{Example \ref*{indicator and support function}}.

\begin{corollary}\label{mixed integral with indicator functions}
Let $Q_1,\dots,Q_n$ be rational polytopes in $M_\mathbb{R}$ and $f$ a concave function on $N_\mathbb{R}$ with stability set a rational polytope. Then
\[\MI_M\big(\iota_{Q_1},\dots,\iota_{Q_n},f^\vee\big)=-\MV_M(Q_1,\dots,Q_n)\cdot f(0).\]
\end{corollary}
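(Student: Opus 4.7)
The plan is to deduce this from the recursive formula of Theorem \ref{recursive formula with mixed integral and mixed real Monge-Ampere measure}, after using symmetry of the mixed integral to place $f^\vee$ in the first slot. I set $g_0 := f^\vee$, which has effective domain the rational polytope $\stab(f)$, and $g_i := \iota_{Q_i}$ for $i=1,\dots,n$; then $g_i^\vee = \Psi_{Q_i}$, and under the closedness assumption on $f$ (so that $(f^\vee)^\vee = f$) the recursive formula expresses the target mixed integral as minus the sum of two contributions: a sum over primitive $u \in N$ weighted by $\Psi_{\stab(f)}(u)$, and the integral of $f$ against $\MM_M(\Psi_{Q_1},\dots,\Psi_{Q_n})$.

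The first sum vanishes term by term. Indeed, fix a primitive $u \in N$; since the sup-convolution of indicator functions satisfies $\iota_{P_1}\boxplus\dots\boxplus\iota_{P_r}=\iota_{P_1+\dots+P_r}$, every sup-convolution appearing in the defining alternating sum of $\MI_{M(u)}\bigl(\iota_{Q_1}|_{Q_1^u},\dots,\iota_{Q_n}|_{Q_n^u}\bigr)$ is itself an indicator function of a Minkowski sum of faces, which takes the value $0$ on its support. Every Lebesgue integral in the definition of that mixed integral is therefore zero, and so is the mixed integral itself.

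For the second contribution, I would apply Proposition \ref{real Monge-Ampere measure of piecewise affine functions} to the piecewise affine concave functions $\iota_{Q_i}$, whose hypographs are $\Gamma_i = Q_i \times (-\infty, 0]$. A direct computation gives $\pi(\Gamma_i^{(v,-1)}) = Q_i^v$, whence
\[
\MM_M(\Psi_{Q_1},\dots,\Psi_{Q_n}) = \sum_{v\in N_\mathbb{R}} \MV_M(Q_1^v,\dots,Q_n^v)\, \delta_v.
\]
For any $v\neq 0$ the faces $Q_i^v$ lie in parallel affine hyperplanes orthogonal to $v$, so $Q_1^v + \dots + Q_n^v$ has dimension at most $n-1$ in $M_\mathbb{R}$ and the mixed volume vanishes; only $v=0$ contributes, giving $\MV_M(Q_1,\dots,Q_n)\,\delta_0$. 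Consequently $\int_{N_\mathbb{R}} f\, d\MM_M(\Psi_{Q_1},\dots,\Psi_{Q_n}) = \MV_M(Q_1,\dots,Q_n)\cdot f(0)$, and combining with the vanishing of the first sum yields the stated identity. The only delicate point is continuity of $f^\vee$ on all of $\stab(f)$, required to legitimately apply Theorem \ref{recursive formula with mixed integral and mixed real Monge-Ampere measure}; should this fail on the boundary, one can work first with a piecewise affine approximation and pass to the limit using continuity of both sides of the claimed identity with respect to uniform convergence of the entries.
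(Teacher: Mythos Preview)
Your proof is correct and follows essentially the same route as the paper: use symmetry to put $f^\vee$ in the distinguished slot, apply the recursive formula of Theorem \ref{recursive formula with mixed integral and mixed real Monge-Ampere measure}, observe that the boundary sum vanishes because the restricted indicator functions are identically zero, and then identify $\MM_M(\Psi_{Q_1},\dots,\Psi_{Q_n})$ with $\MV_M(Q_1,\dots,Q_n)\,\delta_0$ via Proposition \ref{real Monge-Ampere measure of piecewise affine functions}. Your write-up is in fact more explicit than the paper's on both steps (you spell out the hypograph computation and the vanishing of the face mixed volumes for $v\neq 0$), and your remark on continuity of $f^\vee$ is a reasonable caution, though it is automatic here since a closed concave function on a polytope is continuous.
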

\begin{proof}
By symmetry, one can develop the recursive formula in \hyperref[remark for the Legendre-Fenchel dual version of the recursive formula for mixed integrals]{Remark \ref*{remark for the Legendre-Fenchel dual version of the recursive formula for mixed integrals}} with respect to $f^\vee$ to obtain
\[\MI_M\big(\iota_{Q_1},\dots,\iota_{Q_n},f^\vee\big)=-\int_{N_\mathbb{R}}f\ d\MM_M\Big(\iota_{Q_1}^\vee,\dots,\iota_{Q_n}^\vee\Big),\]
the indicator functions $\iota_{Q_1},\dots,\iota_{Q_n}$ being zero where defined. The duality in \hyperref[indicator and support function]{Example \ref*{indicator and support function}} and the fact that
\[\MM_M\big(\Psi_{Q_1},\dots,\Psi_{Q_n}\big)=\MV_M(Q_1,\dots,Q_n)\delta_0\]
because of \hyperref[real Monge-Ampere measure of piecewise affine functions]{Proposition \ref*{real Monge-Ampere measure of piecewise affine functions}} conclude the proof.
\end{proof}

The second application explains how the mixed integral behaves with respect to pointwise sum by a constant in one entry.

\begin{corollary}\label{mixed integral with pointwise sum by a constant}
Let $g_i$ be a concave function defined on a rational polytope $Q_i\subseteq M_\mathbb{R}$, for $i=0,\dots,n$, and $c\in\mathbb{R}$. Then
\[\MI_M(g_0,\dots,g_{n-1},g_n+c)=\MI_M(g_0,\dots,g_n)+c\cdot\MV_M(Q_0,\dots,Q_{n-1}).\]
\end{corollary}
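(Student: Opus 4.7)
The plan is to reduce the identity to a special computation via the multilinearity of $\MI_M$ with respect to sup-convolution, and then to evaluate the resulting auxiliary mixed integral using the recursive formula of \hyperref[recursive formula with mixed integral and mixed real Monge-Ampere measure]{Theorem \ref*{recursive formula with mixed integral and mixed real Monge-Ampere measure}}.

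First I would introduce the concave function $h$ on $M_\mathbb{R}$ which equals $c$ at the origin and $-\infty$ elsewhere, so that $h = c + \iota_{\{0\}}$. A direct unwinding of the definition of sup-convolution shows that $g_n \boxplus h = g_n + c$ on $Q_n$, since the supremum defining the sup-convolution is forced to be taken at $u_2 = 0$. Appealing to the multilinearity of $\MI_M$ in its entries with respect to $\boxplus$ then yields
\[\MI_M(g_0,\dots,g_{n-1},g_n+c)=\MI_M(g_0,\dots,g_n)+\MI_M(g_0,\dots,g_{n-1},h),\]
which reduces the claim to proving $\MI_M(g_0,\dots,g_{n-1},h)=c\cdot\MV_M(Q_0,\dots,Q_{n-1})$.

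For this identity I would apply \hyperref[recursive formula with mixed integral and mixed real Monge-Ampere measure]{Theorem \ref*{recursive formula with mixed integral and mixed real Monge-Ampere measure}} to the tuple $(h,g_0,\dots,g_{n-1})$, which is permissible by symmetry. Since $\dom(h)=\{0\}$, the support function $\Psi_{\{0\}}$ vanishes identically on $N_\mathbb{R}$, so the first (combinatorial) sum in the recursive formula is zero. The Legendre-Fenchel dual of $h$ is readily computed to be the constant function $-c$ on $N_\mathbb{R}$, and therefore the remaining integral equals $-\int_{N_\mathbb{R}}(-c)\,d\MM_M(g_0^\vee,\dots,g_{n-1}^\vee)$. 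Since $\stab(g_i^\vee)=Q_i$, the recalled formula for the total mass of a mixed real Monge-Amp\`ere measure in \hyperref[subsection about real Monge-Ampere measures]{Section \ref*{subsection about real Monge-Ampere measures}} identifies this integral with $c\cdot\MV_M(Q_0,\dots,Q_{n-1})$, closing the argument.

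The only delicate point is to verify that both the multilinearity statement and the recursive formula remain applicable when one entry has a degenerate, zero-dimensional effective domain. This is straightforward, as $\{0\}$ is itself a rational polytope on which $h$ is trivially continuous, and every integral and sup-convolution appearing in the definitions of $\MI_M$ and of the recursive formula is perfectly well-defined in this degenerate situation.
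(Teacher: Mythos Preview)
Your proof is correct and follows essentially the same route as the paper's own argument: the paper likewise writes $g_n+c=g_n\boxplus c\delta_0$ (your $h$), invokes multilinearity, and then applies \hyperref[recursive formula with mixed integral and mixed real Monge-Ampere measure]{Theorem \ref*{recursive formula with mixed integral and mixed real Monge-Ampere measure}} with respect to this degenerate entry, using $(c\delta_0)^\vee=-c$ and the total mass of the mixed Monge--Amp\`ere measure to conclude. Your explicit remark that $\Psi_{\{0\}}\equiv0$ kills the boundary sum is the step the paper leaves implicit.
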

\begin{proof}
Denoting by $c\delta_0$ the concave function which has value $c$ at $0$ and $-\infty$ otherwise, it follows from the definitions that $g_n+c=g_n\boxplus c\delta_0$. The multilinearity of mixed integrals implies then that \[\MI_M(g_0,\dots,g_{n-1},g_n+c)=\MI_M(g_0,\dots,g_n)+\MI_M(g_0,\dots,g_{n-1},c\delta_0).\]
Using the fact that $(c\delta_0)^\vee=-c$, the recursive formula in \hyperref[recursive formula with mixed integral and mixed real Monge-Ampere measure]{Theorem \ref*{recursive formula with mixed integral and mixed real Monge-Ampere measure}}, developed with respect to $c\delta_0$, yields
\[\MI_M(g_0,\dots,g_{n-1},c\delta_0)=\int_{N_\mathbb{R}}c\ d\MM_M(g_0^\vee,\dots,g_{n-1}^\vee)=c\cdot\MM_M(g_0^\vee,\dots,g_{n-1}^\vee)(N_\mathbb{R}).\]
The statement follows then from the fact that the total volume of the mixed Monge-Amp\`ere measure of $g_0^\vee,\dots,g_{n-1}^\vee$ is equal to $\MV_M(\dom(g_0),\dots,\dom(g_{n-1}))$ by \cite[Proposition 3(iv)]{PR}.
\end{proof}

We conclude the section by proving a formula expressing the mixed integral of a $(n+1)$-tuple of concave functions on $M_\mathbb{R}$ where one of them is the indicator function of a line segment. \\Let $m$ be a primitive vector of $M$ and consider the quotient $P:=M/\mathbb{Z}m$. Since $m$ is primitive, $P$ is a lattice of rank $n-1$. By abuse of notation, let $\pi$ denote both the projection from $M$ to $P$ and the induced linear map from $M_\mathbb{R}$ to $P_\mathbb{R}$. For each closed concave function $g$ defined on a compact subset $B$ of $M_\mathbb{R}$, let
\begin{equation}\label{definition of direct image of concave functions}
\pi_*g:\pi(B)\to\mathbb{R},\quad x\mapsto\max_{y\in \pi^{-1}(x)}g(y)
\end{equation}
be the \emph{direct image} of $g$ by $\pi$. It is a well defined closed concave function with domain a bounded subset of $P_\mathbb{R}$, see \cite[Theorem 5.7 and Theorem 9.2]{R}. Finally, for $x_1,x_2\in M_\mathbb{R}$, denote by $\overline{x_1x_2}$ the line segment in $M_\mathbb{R}$ with extremal points $x_1$ and $x_2$. The following lemma is a generalization of \cite[exercise 3 at page 128]{Ewald} and seems to be well-known to experts, though we could not find an adequate reference in the literature for its proof.

\begin{lemma}\label{lemma for mixed volumes and projections}
In the above hypotheses and notations and for $n\geq2$, let $Q_1,\dots,Q_{n-1}$ be polytopes in $M_\mathbb{R}$. Then,
\[\MV_M\big(\overline{0m},Q_1,\dots,Q_{n-1}\big)=\MV_P\big(\pi(Q_1),\dots,\pi(Q_{n-1})\big).\]
\end{lemma}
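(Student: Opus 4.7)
The idea is to use the Cavalieri--Fubini decomposition of $\vol_M$ with respect to the projection $\pi$, combined with the polarization identity for mixed volumes. Since $m$ is primitive in $M$, it can be completed to a $\mathbb{Z}$-basis of $M$, giving a splitting $M_\mathbb{R}\cong\mathbb{R}m\oplus P_\mathbb{R}$ under which the Haar measure $\vol_M$ decomposes as the product of $\vol_P$ with the Haar measure on $\mathbb{R}m$ that assigns length $1$ to the segment $\overline{0m}$.

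Set $R(\lambda):=\lambda_1 Q_1+\cdots+\lambda_{n-1}Q_{n-1}$ for $\lambda\in\mathbb{R}_{\geq 0}^{n-1}$. Since $\overline{0m}$ lies in the kernel $\mathbb{R}m$ of $\pi$, for every $x\in P_\mathbb{R}$ the fiber identity
\[
\pi^{-1}(x)\cap\bigl(\lambda_0\overline{0m}+R(\lambda)\bigr)=\bigl(\pi^{-1}(x)\cap R(\lambda)\bigr)+\lambda_0\overline{0m}
\]
holds inside the affine line $\pi^{-1}(x)$, so the fiber length (measured in the $m$-normalization) is augmented by exactly $\lambda_0$. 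Integrating over $P_\mathbb{R}$ with respect to $\vol_P$ and using that $\pi$ commutes with Minkowski sums and scalar multiples, I obtain
\[
\vol_M\bigl(\lambda_0\overline{0m}+R(\lambda)\bigr)=\vol_M(R(\lambda))+\lambda_0\,\vol_P\!\Big(\sum_{i=1}^{n-1}\lambda_i\pi(Q_i)\Big).
\]

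To conclude, I would extract the coefficient of $\lambda_0\lambda_1\cdots\lambda_{n-1}$ from both sides. By polarization and the normalization $\MV_M(Q,\dots,Q)=n!\,\vol_M(Q)$, that coefficient on the left is exactly $\MV_M(\overline{0m},Q_1,\dots,Q_{n-1})$; on the right, the first term contributes nothing because it has no factor of $\lambda_0$, while the second contributes the coefficient of $\lambda_1\cdots\lambda_{n-1}$ in $\vol_P(\sum\lambda_i\pi(Q_i))$, which is $\MV_P(\pi(Q_1),\dots,\pi(Q_{n-1}))$. The main (mildly technical) obstacle is the bookkeeping of the lattice normalizations: the Fubini product must yield precisely $\vol_M$, and the fiber length inherited from $\vol_M$ must measure $\overline{0m}$ as unit, both of which rest on the primitivity of $m$.
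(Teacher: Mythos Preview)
Your argument is correct and follows essentially the same route as the paper's proof. Both begin by completing $m$ to a lattice basis to obtain the product decomposition $\vol_M=\vol_{\mathbb{Z}m}\times\vol_P$, and both rest on the Cavalieri identity that adding the segment $\overline{0m}$ to a convex body increases the $M$-volume by the $P$-volume of its projection. The only organisational difference is in how polarization is invoked: the paper specialises to $\lambda_0\in\{0,1\}$ and feeds the resulting identity $\vol_M(S+Q)-\vol_M(Q)=\vol_P(\pi(Q))$ into the alternating-sum definition of the mixed volume, whereas you keep the $\lambda_i$ variable and read off the coefficient of $\lambda_0\lambda_1\cdots\lambda_{n-1}$ in the Minkowski polynomial. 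These two devices are equivalent, so the proofs coincide in substance.
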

\begin{proof}
The vector $m$ being primitive, it can be extended to a basis of the lattice $M$, see for instance \cite[Theorem 5 at page 21]{Lekk}. We suppose fixed throughout the proof such a basis $(m_1,\dots,m_{n-1},m)$ of $M$ and the induced isomorphism $M_\mathbb{R}\simeq\mathbb{R}^n$; under this identification, the normalized volume $\vol_M$ corresponds to the Lebesgue measure $ \vol_n$ on $\mathbb{R}^n$. Since $(\pi(m_1),\dots,\pi(m_{n-1}))$ is a basis of $P$, such a lattice is isomorphic to the span of $m_1,\dots,m_{n-1}$ in $M$ and hence it is identified with the linear subspace $\mathbb{R}^{n-1}\times\{0\}$ of $\mathbb{R}^n$. Moreover, $\vol_P$ corresponds to the $(n-1)$-dimensional Lebesgue measure $\vol_{n-1}$ on $\mathbb{R}^{n-1}\times\{0\}$ and the map $\pi$ to the vertical projection.
\\The claim reduces then to the particular case of a family of polytopes $Q_1,\dots,Q_{n-1}$ in $\mathbb{R}^n$, $m=(0,\dots,0,1)$ and $\pi$ the vertical projection. Denoting by $S$ the vertical segment of unitary length and rearranging the terms in the definition of the mixed volume given for instance in \cite[Definition 2.7.14]{BPS} one obtains
\begin{multline*}
\MV_n(S,Q_1,\dots,Q_{n-1})=\\\sum_{k=1}^{n-1}(-1)^{n-1-k}\sum_{1\leq i_1<\dots<i_k\leq n-1}\big(\vol_n(S+Q_{i_1}+\dots+Q_{i_k}\big)-\vol_n(Q_{i_1}+\dots+Q_{i_k})\big)
\end{multline*}
since the $n$-dimensional volume of a line segment vanishes for $n\geq2$. To prove the claim it is hence enough to show that for each polytope $Q$ in $\mathbb{R}^n$ the equality
\[\vol_n(S+Q)-\vol_n(Q)=\vol_{n-1}(\pi(Q))\]
holds. But $Q\subset S+Q$ and the difference of their volumes coincides with the integral over $\pi(Q)=\pi(S+Q)$ of the difference between the concave functions parametrizing the roof of the polytope $S+Q$ and $Q$ respectively. Such a difference being constantly equal to $1$ on $\pi(Q)$, the claim follows from the definition of the Lebesgue integral, concluding the proof.
\end{proof}

\begin{proposition}\label{mixed integral with indicator function of a segment}
In the above hypotheses and notations, let $g_i$ be a continuous concave function defined on a polytope $Q_i$ in $M_\mathbb{R}$, for $i=1,\dots,n$. Then, \[\MI_M\big(\iota_{\overline{0m}},g_1,\dots,g_n\big)=\MI_P(\pi_*g_1,\dots,\pi_*g_n).\]
\end{proposition}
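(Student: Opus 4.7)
The plan is to expand both sides from the definition of the mixed integral and match them term by term, using a Fubini-type identity along the fibers of $\pi$. The starting point is the following identity: for any nonempty $S \subseteq \{1, \ldots, n\}$, writing $h_S := \boxplus_{i \in S} g_i$ with domain $Q_S := \sum_{i \in S} Q_i$, one has
\[
\int_{\overline{0m} + Q_S} \big(\iota_{\overline{0m}} \boxplus h_S\big)\, d\vol_M \;=\; \int_{Q_S} h_S\, d\vol_M + \int_{\pi(Q_S)} \pi_* h_S\, d\vol_P.
\]
To establish it, extend the primitive vector $m$ to a lattice basis of $M$, so the coordinate $t$ along $m$ disintegrates $\vol_M$ as $d\vol_P \otimes dt$. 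For each $x \in \pi(Q_S)$, the function $r \mapsto h_S(x + rm)$ is concave on its support interval $[a(x), b(x)]$, attaining its maximum $\pi_* h_S(x)$ at some $r^*(x)$. A direct computation, cutting the fiber integral into the three regions $t \in [a(x), r^*(x)]$, $t \in [r^*(x), r^*(x)+1]$, and $t \in [r^*(x)+1, b(x)+1]$, gives
\[
\int_\mathbb{R} \big(\iota_{\overline{0m}} \boxplus h_S\big)(x + tm)\, dt \;=\; \int_\mathbb{R} h_S(x + tm)\, dt + \pi_* h_S(x),
\]
since the sup-convolution with $\iota_{\overline{0m}}$ stretches the concave profile $r \mapsto h_S(x+rm)$ by inserting a plateau of length $1$ at the value $\pi_* h_S(x)$.

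Next, expand $\MI_M(\iota_{\overline{0m}}, g_1, \ldots, g_n)$ from its definition and split the sum over subsets $S^\prime \subseteq \{0, 1, \ldots, n\}$ according to whether $0 \in S^\prime$ or not. For each nonempty $S \subseteq \{1, \ldots, n\}$, the contribution with $S^\prime = \{0\} \cup S$ carries the sign $(-1)^{n-|S|}$, while the one with $S^\prime = S$ carries the opposite sign $-(-1)^{n-|S|}$. Applying the Fubini identity, the integrals $\int_{Q_S} h_S\, d\vol_M$ cancel between the two groups, and what remains is
\[
\MI_M\big(\iota_{\overline{0m}}, g_1, \ldots, g_n\big) \;=\; \sum_{\emptyset \neq S \subseteq \{1, \ldots, n\}} (-1)^{n - |S|} \int_{\sum_{i \in S} \pi(Q_i)} \pi_* h_S\, d\vol_P;
\]
the remaining contribution from $S^\prime = \emptyset$ vanishes because $\iota_{\overline{0m}} \equiv 0$ on $\overline{0m}$. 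An easy check from the definitions shows that $\pi_*$ commutes with sup-convolution, so $\pi_* h_S = \boxplus_{i \in S} \pi_* g_i$; after reindexing $|S| = k+1$, the sum above coincides exactly with the expansion of $\MI_P(\pi_* g_1, \ldots, \pi_* g_n)$ (noting that the ambient dimension drops from $n$ to $n-1$, so the sign $(-1)^{(n-1) - k}$ agrees with $(-1)^{n - |S|}$).

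The main point requiring care is the fiberwise identity, since the shape of $(\iota_{\overline{0m}} \boxplus h_S)(x + \cdot\, m)$ depends on the location of $r^*(x)$; however, concavity of $r \mapsto h_S(x+rm)$ makes the three-region decomposition work uniformly, and the boundary cases where $r^*(x)$ lies at an endpoint of the support interval collapse harmlessly to one-sided shifts. No appeal to the recursive formula of Theorem \ref{recursive formula with mixed integral and mixed real Monge-Ampere measure} is needed, which keeps the argument self-contained and completely parallel to the mixed volume statement of Lemma \ref{lemma for mixed volumes and projections}.
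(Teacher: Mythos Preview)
Your argument is correct and genuinely different from the paper's. The only slip is cosmetic: the leftover term you call ``$S' = \emptyset$'' is actually $S' = \{0\}$ (the definition of the mixed integral only sums over nonempty index sets), but your justification that $\iota_{\overline{0m}} \equiv 0$ on its domain is exactly the right one for that term, so nothing is affected.

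The paper proceeds quite differently. It invokes \cite[Proposition IV.5 (d)]{PS1} to rewrite each mixed integral as a mixed volume of the truncated hypographs $Q_{g_i,\gamma_i} \subset M_\mathbb{R}\times\mathbb{R}$, plus correction terms involving the $\gamma_i$; this reduces the statement to Lemma~\ref{lemma for mixed volumes and projections} applied in $M_\mathbb{R}\times\mathbb{R}$ with the primitive vector $(m,0)$, after which the same \cite{PS1} formula is used in reverse on $P_\mathbb{R}$. Your route avoids this detour entirely: you work directly with the alternating-sum definition and isolate the single fiberwise identity
\[
\int_{\mathbb{R}} \big(\iota_{\overline{0m}} \boxplus h_S\big)(x+tm)\,dt \;=\; \int_{\mathbb{R}} h_S(x+tm)\,dt + \pi_*h_S(x),
\]
which is the concave-function analogue of the Cavalieri step inside Lemma~\ref{lemma for mixed volumes and projections}. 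The trade-off is clear: the paper's proof is short because it outsources the combinatorics to an existing result, but it imports a nontrivial external statement; yours is self-contained and makes the parallel with Lemma~\ref{lemma for mixed volumes and projections} transparent, at the cost of tracking the signs and the boundary cases by hand. Both ultimately rest on the same geometric fact, that sup-convolution with $\iota_{\overline{0m}}$ inserts a unit-length plateau at the fiberwise maximum.
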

\begin{proof}
For $n=1$, the claim follows from \hyperref[mixed integral with indicator functions]{Corollary \ref*{mixed integral with indicator functions}}. Assume hence $n\geq2$. Choose for each $i=1,\dots,n$ a nonpositive real number $\gamma_i$ such that $\gamma_i\leq\min_{x\in Q_i} g_i(x)$ and consider the convex body\[Q_{g_i,\gamma_i}:=\{(x,t)\in Q_i\times\mathbb{R}: \gamma_i\leq t\leq g_i(x)\}\]in $M_\mathbb{R}\times\mathbb{R}$. The formula in \cite[Proposition IV.5 (d)]{PS1} implies that
\begin{multline*}
\MI_M\big(\iota_{\overline{0m}},g_1,\dots,g_n\big)=\MV_{M\oplus\mathbb{Z}}\Big(\overline{(0,0)(m,0)},Q_{g_1,\gamma_1},\dots,Q_{g_n,\gamma_n}\Big)\\+\sum_{i=1}^n\gamma_i\MV_M\big(\overline{0m},Q_1,\dots,Q_{i-1},Q_{i+1},\dots,Q_n\big).
\end{multline*}
By the hypotheses on $m$, $(m,0)$ is a nonzero primitive vector of the lattice $M\oplus\mathbb{Z}$. The map $\pi^\prime:=\pi\times\id_\mathbb{Z}:M\oplus\mathbb{Z}\to P\oplus\mathbb{Z}$ is a surjective group homomorphism, giving $(M\oplus\mathbb{Z})/\mathbb{Z}(m,0)\simeq P\oplus\mathbb{Z}$. By \hyperref[lemma for mixed volumes and projections]{Lemma \ref*{lemma for mixed volumes and projections}},
\begin{multline*}
\MI_M\big(\iota_{\overline{0m}},g_1,\dots,g_n\big)=\MV_{P\oplus\mathbb{Z}}\big(
\pi^\prime(Q_{g_1,\gamma_1}),\dots,\pi^\prime(Q_{g_n,\gamma_n})\big)\\+\sum_{i=1}^n\gamma_i\MV_P\big(\pi(Q_1),\dots,\pi(Q_{i-1}),\pi(Q_{i+1}),\dots,\pi(Q_n)\big).
\end{multline*}
The statement follows hence from the equality in \cite[Proposition IV.5 (d)]{PS1} applied to the concave functions $\pi_*g_1,\dots,\pi_*g_n$, the direct image of $g_i$ by $\pi$ being a concave function defined on $\pi(Q_i)$ and satisfying
\begin{equation*}
\begin{split}
\pi^\prime(Q_{g_i,\gamma_i})&=\{(\pi(y),t)\in \pi(Q_i)\times\mathbb{R}: \gamma_i\leq t\leq g_i(y)\}\\&=\big\{(x,t)\in \pi(Q_i)\times\mathbb{R}: \gamma_i\leq t\leq (\pi_*g_i)(x)\big\}
\end{split}
\end{equation*}
for every $i=1,\dots,n$.
\end{proof}

\section{Ronkin functions}\label{section about Ronkin functions}

Fix for the whole section an algebraically closed complete field $(K,|\cdot|)$, that is a pair of an algebraically closed field and an absolute value with respect to which the field is complete. Depending on the nature of the absolute value, $(K,|\cdot|)$ is said to be archimedean or non-archimedean. As a consequence of Gelfand-Mazur theorem, the only archimedean algebraically closed complete field is $\mathbb{C}$ endowed with a power of the usual absolute value. When the choice of the absolute value is clear from the context, an algebraically closed complete field will be simply denoted by $K$.

\subsection{Tropicalization}

Given an affine variety $X=\spec A$ over an algebraically closed complete field $K$, let $\iota:K\to A$ be the corresponding ring homomorphism. Berkovich's construction allows to define a locally ringed space $(X^{\an},\mathcal{O}_{X^{\an}})$, called the \emph{analytification} of $X$; as a set
\[X^{\an}:=\{\|\cdot\|_x \text{ multiplicative seminorm on }A: \|\iota(k)\|_x=|k| \text{ for all }k\in K\}.\]
We refer to \cite[\S1.5]{Ber} (or also to \cite[\S1.2]{BPS} for a more concise treatment) for the definition of the topology and the structure sheaf on $X^{\an}$. By a gluing argument, one can then extend the definition to any variety over $K$.

\begin{rem}
When $K=\mathbb{C}$ endowed with the usual absolute value, the Gelfand-Mazur theorem implies that the locally ringed space produced by Berkovich's construction agrees with the standard complex analytification of a variety over $\spec \mathbb{C}$.
\end{rem}

Let $\mathbb{T}$ denote a split torus over $K$ of dimension $n$ and $M$ its character lattice, in such a way that $\mathbb{T}=\spec K[M]$. A basis of the $K$-algebra $K[M]$ will be denoted, accordingly to \cite[beginning of \S1.3]{Ful}, by $(\chi^m)_{m\in M}$ and the elements of $K[M]$ will be called \emph{Laurent polynomials} over $K$. Let $N$ be the dual lattice of $M$ and denote by $N_\mathbb{R}=N\otimes\mathbb{R}$ the associated real vector space.

\begin{defn}\label{definition of tropicalization}
The \emph{tropicalization map} $\trop:\mathbb{T}^{\an}\to N_\mathbb{R}=\hom(M,\mathbb{R})$ is the application defined by \[\left(\trop(\|\cdot\|_x)\right)(m):=-\log\|\chi^m\|_x\] for every $\|\cdot\|_x\in\mathbb{T}^{\an}$.
\end{defn}

The tropicalization map turns out to be a continuous application with respect to the Berkovich topology of $\mathbb{T}^{\an}$ and the Euclidean topology of $N_\mathbb{R}$. Moreover, in the archimedean case, the choice of a basis of $M$ allows to write such map in the more familiar form \[\trop((z_1,\dots,z_n))=(-\log|z_1|,\dots,-\log|z_n|).\] 

\begin{rem}
The tropicalization map coincides with the valuation map $\val$ used by Burgos Gil, Philippon and Sombra, see \cite[Equation 4.1.2]{BPS}.
\end{rem}

When the absolute value on $K$ is non-archimedean, one can construct a suitable section of $\trop$. For each $u\in N_\mathbb{R}$, consider the map which associates to a Laurent polynomial $f=\sum c_m\chi^m$ the real value \[\|f\|_{\kappa(u)}:=\max_m |c_m|e^{-\langle m,u\rangle}.\] One can verify that $\|\cdot\|_{\kappa(u)}$ is a multiplicative seminorm on $K[M]$ extending the absolute value on $K$, and hence it corresponds to a point $\kappa(u)\in\mathbb{T}^{\an}$, which one calls the \emph{Gauss point} over $u$. The application $\kappa:N_\mathbb{R}\to\mathbb{T}^{\an}$ defined by \[\kappa:u\to\|\cdot\|_{\kappa(u)}\] is proved to be a continuous section of $\trop$, see for instance \cite[Proposition-Definition 4.2.12]{BPS}, $\kappa$ coinciding with $\theta_0\circ\mathbf{e}$ in the cited reference.

\vspace{\baselineskip}
It is easily seen that for any closed subvariety $X$ of $\mathbb{T}$, there is a natural inclusion of sets $X^{\an}\subseteq\mathbb{T}^{\an}$, making the following definition meaningful.

\begin{defn}
Let $f$ be a nonzero Laurent polynomial in $K[M]$ and $V(f)$ the associated closed subvariety of $\mathbb{T}$. The subset \[\mathcal{A}_f:=\trop\left(V(f)^{\an}\right)\subseteq N_\mathbb{R}\]is called the \emph{amoeba} of $f$.
\end{defn}

The so-defined set has been widely studied in the literature. In the archimedean case, it coincides (up to a change of sign) with the notion of amoeba studied by Gelfand, Kapranov and Zelevinsky in \cite{GKZ} and by Passare and Rullg\aa rd in \cite{PR}. In the non-archimedean case, the amoeba of a nonzero Laurent polynomial $f$ coincides with the corner locus of the associated tropical polynomial $f^{\trop}$ (see \cite[Theorem 2.1.1]{EKL}).

\subsection{Minimal boundaries}\label{section on boundaries}

To keep the treatment of the archimedean and non-archimedean settings uniform, the following notion results to be crucial.

\begin{defn}
Let $K$ be an algebraically closed complete field and $A$ a $K$-algebra. For any set $\mathcal{S}$ of multiplicative seminorms on $A$ extending the absolute value of $K$, a \emph{boundary} of $\mathcal{S}$ is a subset $\mathcal{B}\subseteq\mathcal{S}$ such that\[\max_{x\in\mathcal{S}}\|a\|_x=\max_{x\in\mathcal{B}}\|a\|_x\]for every $a\in A$. 
\end{defn}

In other words, any boundary of $\mathcal{S}$ contains the whole information about the maximal values that the seminorms in $\mathcal{S}$ can attain. As a trivial example, $\mathcal{S}$ is a boundary of $\mathcal{S}$.
\\We are especially interested in the boundary of the fibers of the tropicalization map. Keeping the notation of the previous subsection, for a point $u\in N_\mathbb{R}$, it is immediate to show that \[\trop^{-1}(u)=\left\{\|\cdot\|_x\in\mathbb{T}^{\an}:\|\chi^m\|_x=e^{-\langle m,u\rangle}\text{ for all }m\in M\right\}.\]In the complex case, the existence of a unique minimal boundary for an algebra of functions on a compact space has been proved by Shilov. The following result, which equally holds in the non-archimedean case, is well-known by experts.

\begin{proposition}\label{minimal boundary}
The set $\trop^{-1}(u)$ has a unique minimal boundary. In the archime\-dean case, it coincides with the whole $\trop^{-1}(u)$, while in the non-archimedean case it consists of the Gauss point over $u$.
\end{proposition}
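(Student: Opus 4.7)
The proof naturally splits into the archimedean and non-archimedean cases, which are handled by different tools.

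\textbf{Non-archimedean case.} The central input is the ultrametric triangle inequality. For any $x \in \trop^{-1}(u)$ and any Laurent polynomial $f = \sum_m c_m \chi^m \in K[M]$, multiplicativity of $\|\cdot\|_x$ combined with the ultrametric inequality and the definition of the fiber gives
\[\|f\|_x \;\le\; \max_m |c_m|\cdot \|\chi^m\|_x \;=\; \max_m |c_m|\,e^{-\langle m,u\rangle} \;=\; \|f\|_{\kappa(u)}.\]
Hence the singleton $\{\kappa(u)\}$ is a boundary of $\trop^{-1}(u)$; and since the empty set cannot be a boundary (apply the boundary condition to $f=1$), this singleton is inclusion-minimal. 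Uniqueness comes from the observation that any other singleton boundary $\{x\}$ would force $\|\cdot\|_x = \|\cdot\|_{\kappa(u)}$ on all of $K[M]$, so $x=\kappa(u)$.

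\textbf{Archimedean case.} Here $K=\mathbb{C}$. After choosing a basis of $M$, the fiber $\trop^{-1}(u)$ is identified with the compact real torus $T_u=\{z\in(\mathbb{C}^*)^n:|z_k|=e^{-u_k}\}\cong(S^1)^n$, on which $\chi^m$ evaluates (as a function of the argument $\theta$) to $e^{-\langle m,u\rangle}e^{i\langle m,\theta\rangle}$. The strategy is to show that every point of $T_u$ is a strict peak for some Laurent polynomial, which forces every boundary to contain every point of $T_u$ and so to coincide with the whole fiber, trivializing minimality and uniqueness. For $y_0 \in T_u$ with argument coordinates $\theta^0$, I would set
\[f_N(z) := \sum_{\|m\|_\infty \le N} \prod_{k=1}^n \Bigl(1-\tfrac{|m_k|}{N+1}\Bigr)\,e^{\langle m,u\rangle - i\langle m,\theta^0\rangle}\,\chi^m(z);\]
a direct computation reduces $f_N|_{T_u}(\theta)$ to the tensor-product Fej\'er kernel $\prod_k F_N(\theta_k-\theta_k^0)\ge 0$, which is strictly maximized on $T_u$ at $\theta = \theta^0$, i.e. at $y_0$.

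\textbf{Main obstacle.} In both cases the existence part is an immediate computation; the delicate point is uniqueness. In the archimedean setting, one must produce genuine Laurent polynomials (not merely continuous functions) that peak at a prescribed point, which is exactly what the Fej\'er recipe above does once the coefficients are tuned to absorb the factors $e^{-\langle m,u\rangle}$. In the non-archimedean setting the subtlety is of a different flavor: many seminorms in $\trop^{-1}(u)$ simultaneously attain the Gauss value on individual monomials (e.g.\ every $x \in \trop^{-1}(u)$ has $\|\chi^m\|_x = \|\chi^m\|_{\kappa(u)}$), so the uniqueness of $\{\kappa(u)\}$ as the minimal boundary ultimately rests on the fact that a multiplicative seminorm on $K[M]$ is determined by its values on the whole algebra.
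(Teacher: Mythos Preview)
Your proof is correct and follows the same two-case strategy as the paper: in the non-archimedean case the ultrametric inequality makes $\{\kappa(u)\}$ a boundary, and in the archimedean case one shows that every point of the fiber is the strict peak of some Laurent polynomial, forcing it into every boundary. The only real difference is your choice of peak polynomial: you use a tensor-product Fej\'er kernel with coefficients rescaled by $e^{\langle m,u\rangle}$, whereas the paper takes the more economical
\[
f(T)=\prod_{k=1}^n\bigl(T_k+e^{-u_k+i\theta_k}\bigr),
\]
for which $|f(z)|^2=\prod_k e^{-2u_k}\bigl(2+2\cos(\phi_k-\theta_k)\bigr)$ on the fiber is visibly strictly maximized at $\phi=\theta$. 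Both constructions do the job; the paper's is just quicker to verify. On uniqueness in the non-archimedean case, your remark only rules out other \emph{singleton} boundaries, but the paper does not address uniqueness at all (it frames the proposition as ``well-known by experts''), so you are at least as rigorous as the source.
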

\begin{proof}
In the archimedean case, after having chosen a basis of $M$, one can treat $\mathbb{T}^{\an}$ as $(\mathbb{C}^*)^n$, by identifying a point $z=(z_1,\dots,z_n)$ of $(\mathbb{C}^*)^n$ with the seminorm $\|\cdot\|_z$ defined as \[\|f\|_z=|f(z)|\] for every $f\in K[T_1,\dots,T_n]$. Assuming that the coordinates of $u$ in the basis dual to the chosen one are $(u_1,\dots,u_n)$, the set $\trop^{-1}(u)$ corresponds to the subset of $(\mathbb{C}^*)^n$ consisting of the points $(z_1,\dots,z_n)$ satisfying $-\log|z_i|=u_i$ for every $i=1,\dots,n$. A point $\widetilde{z}$ of $\trop^{-1}(u)$ is then of the form \[\widetilde{z}:=(e^{-u_1+i\theta_1},\dots,e^{-u_n+i\theta_n}),\]with $\theta_i\in[0,2\pi)$ for every $i=1,\dots,n$. It is easy to check that such a point is the only one in $\trop^{-1}(u)$ for which \[\left\|(T_1+e^{-u_1+i\theta_1})\dots(T_n+e^{-u_n+i\theta_n})\right\|_{z}\] is maximal. As a consequence, $\widetilde{z}$ must belong to any boundary of $\trop^{-1}(u)$. This proves that the only boundary of $\trop^{-1}(u)$ is the set itself.
\\In the non-archimedean case, for every $\|\cdot\|_x\in\trop^{-1}(u)$ and for every $f=\sum c_m\chi^m\in K[M]$ one has (by definition of non-archimedean absolute value) that:\[\|f\|_x\leq\max_m|c_m|\|\chi^m\|_x=\max_m|c_m|e^{-\langle m,u\rangle}.\]This trivially implies that the Gauss norm $\|f\|_{\kappa(u)}=\max_m|c_m|e^{-\langle m,u\rangle}$ is a boundary for $\trop^{-1}(u)$.
\end{proof}

For $u\in N_\mathbb{R}$, one denotes the unique minimal boundary of $\trop^{-1}(u)$ described in the previous proposition by $\mathcal{B}(u)$. If not otherwise mentioned, such a set is considered to be endowed with the topology induced from $\mathbb{T}^{\an}$. If one sets
\[
\mathcal{B}_K:=
\begin{cases}
\big(\mathbb{S}^1\big)^n  & \text{if }K\text{ is archimedean}\\
\{1\} & \text{otherwise}
\end{cases},
\]
the boundary $\mathcal{B}(u)$ is homeomorphic to the compact group $\mathcal{B}_K$ for every $u\in N_\mathbb{R}$. This allows to define the measure \[\sigma_u:=\Haar_{\mathcal{B}(u)}\] on $\trop^{-1}(u)$, which is the Haar measure on the compact group $\mathcal{B}(u)$ normalized to have total mass $1$; it is a finite measure on $\trop^{-1}(u)$, supported on $\mathcal{B}(u)$ and distributing homogeneously on this set. In the non-archimedean case it coincides with the Dirac delta at the Gauss point over $u$.
\\There exists an embedding $\iota:N_\mathbb{R}\times\mathcal{B}_K\to\mathbb{T}^{\an}$, fitting in the commutative diagram
\begin{equation}\label{diagram}
\begin{tikzcd}
N_\mathbb{R}\times\mathcal{B}_K \arrow[r,hook,"\iota"] \arrow[d] & \mathbb{T}^{\an} \arrow[dl,"\trop"]\\
N_\mathbb{R}
\end{tikzcd}
\end{equation}
with the vertical arrow being the projection onto the first factor. In the archimedean case, it is determined by the choice of a homeomorphism $\big(\mathbb{C}^*\big)^n\simeq N_\mathbb{R}\times\big(\mathbb{S}^1\big)^n$, while in the non-archimedean case it coincides with the map $(u,1)\mapsto\kappa(u)$. The image of $\iota$ is homeomorphic to $N_\mathbb{R}\times\mathcal{B}_K$ and it is a deformation retract of the analytic torus, coinciding with it if $K$ is archimedean.

\subsection{Ronkin functions}

The terminology and notations introduced in the previous subsection allow to define Ronkin functions in the archimedean and non-archimedean case simultaneously.

\begin{defn}
Let $f$ be a nonzero Laurent polynomial over $K$. The \emph{Ronkin function} of $f$ is the map $\rho_f:N_\mathbb{R}\to\mathbb{R}$ defined as \[\rho_f(u):=\int_{\trop^{-1}(u)}-\log\|f\|_x\ d\sigma_u(x)\]for every $u\in N_\mathbb{R}$.
\end{defn}

The integral in the previous definition is finite. Indeed, logarithmic singularities are integrable in the archimedean case, and the Gauss norm of a nonzero Laurent polynomial is positive in the non-archimedean case.

\begin{rem}\label{remark about known version of Ronkin functions}
In the archimedean case, $\rho_f(u)=-N_f(-u)$, where $N_f$ is the classical Ronkin function associated to a complex Laurent polynomial (refer for instance to \cite{PR}). In the non-archimedean case, it is easily checked that $\rho_f=f^{\trop}$, where the tropicalization of a Laurent polynomial $f=\sum c_m\chi^m$ is defined by \[f^{\trop}(u)=\min_m(\langle m,u\rangle-\log|c_m|)\]for every $u\in N_\mathbb{R}$.
\end{rem}

The following property of the Ronkin function follows immediately from its definition.

\begin{proposition}\label{Ronkin function of a product}
For every pair of nonzero Laurent polynomials $f$ and $g$, $\rho_{f\cdot g}=\rho_f+\rho_g$. For every $\lambda\in K$, moreover, $\rho_{\lambda\cdot f}=-\log|\lambda|+\rho_f$.
\end{proposition}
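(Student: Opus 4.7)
The plan is to reduce both identities to the multiplicativity of the seminorms parametrizing $\mathbb{T}^{\mathrm{an}}$ and the fact that these seminorms extend the absolute value of $K$, then integrate against the probability measure $\sigma_u$.

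First I would record the pointwise identity $\|fg\|_x = \|f\|_x\cdot\|g\|_x$ for every $\|\cdot\|_x\in\mathbb{T}^{\mathrm{an}}$, which is just the definition of a multiplicative seminorm on $K[M]$. Taking $-\log$ gives
\[
-\log\|fg\|_x = -\log\|f\|_x + \bigl(-\log\|g\|_x\bigr)
\]
at every point where both factors are nonzero. In the non-archimedean setting the minimal boundary $\mathcal{B}(u)$ reduces to the Gauss point $\kappa(u)$, whose value on any nonzero Laurent polynomial is strictly positive (as recalled in the proof of Proposition~\ref{minimal boundary}), so the identity holds pointwise on $\mathrm{supp}(\sigma_u)$. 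In the archimedean case, $\sigma_u$ is the normalized Haar measure on the compact torus $\mathcal{B}(u)\simeq (\mathbb{S}^1)^n$, and each of $\{\|f\|_x=0\}$, $\{\|g\|_x=0\}$ is the trace on $\mathcal{B}(u)$ of a proper analytic subset of $(\mathbb{C}^*)^n$, hence has $\sigma_u$-measure zero; so the identity holds $\sigma_u$-almost everywhere.

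Next I would invoke the remark, made right after the definition of $\rho_f$, that the integrals $\int_{\trop^{-1}(u)}-\log\|f\|_x\,d\sigma_u$ and the analogous one for $g$ are finite (logarithmic singularities are $L^1$ for Haar measure on the archimedean torus, and the integrand is constant in the non-archimedean case). By linearity of the integral with respect to $\sigma_u$, summing gives
\[
\rho_{fg}(u) = \int_{\trop^{-1}(u)}-\log\|fg\|_x\,d\sigma_u(x) = \rho_f(u) + \rho_g(u),
\]
which is the first equality.

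For the second, by definition the seminorms $\|\cdot\|_x\in\mathbb{T}^{\mathrm{an}}$ extend the absolute value of $K$, so for $\lambda\in K^\times$ and any $x$ one has $\|\lambda f\|_x = |\lambda|\cdot\|f\|_x$, and therefore $-\log\|\lambda f\|_x = -\log|\lambda| - \log\|f\|_x$. Integrating against $\sigma_u$ and using that $\sigma_u$ is a probability measure yields $\rho_{\lambda f}(u) = -\log|\lambda| + \rho_f(u)$, as asserted. I do not anticipate any real obstacle here; the only subtlety is checking that one has enough integrability to split the integrals, which is handled uniformly by the remark on finiteness of $\rho_f$ in both archimedean and non-archimedean cases.
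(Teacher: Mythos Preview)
Your argument is correct and is exactly the unpacking of the paper's one-line proof (``follows immediately from its definition''): multiplicativity of the seminorms in $\mathbb{T}^{\an}$, the fact that they extend $|\cdot|$, and linearity of the integral against the probability measure $\sigma_u$, with the finiteness remark handling the integrability needed to split the sum. There is nothing to add.
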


Recall that for any Laurent polynomial $f=\sum_mc_m\chi^m$, we denote by $\NP(f)$ the \emph{Newton polytope of $f$}, that is the convex hull in $M_\mathbb{R}$ of the set $\{m\in M:c_m\neq0\}$. Its \emph{support function}, as defined in \hyperref[indicator and support function]{Example \ref*{indicator and support function}}, is denoted by $\Psi_{\NP(f)}$.

\begin{proposition}\label{properties of Ronkin functions}
Let $f$ be a nonzero Laurent polynomial. Then:
\begin{enumerate}
\item $\rho_f$ is a continuous concave function on $N_\mathbb{R}$ (in particular it is closed) and it is affine on each connected component of the complement of the amoeba of $f$
\item $|\rho_f-\Psi_{\NP(f)}|$ is bounded on $N_\mathbb{R}$
\item the stability set of $\rho_f$ coincides with $\NP(f)$ and $\rec(\rho_f)=\Psi_{\NP(f)}$.
\end{enumerate}
\end{proposition}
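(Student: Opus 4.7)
My plan is to handle the archimedean and non-archimedean cases in parallel, exploiting the uniform setup of subsection \ref{section on boundaries} but splitting when needed. In the non-archimedean case the measure $\sigma_u$ is a Dirac mass at the Gauss point (by \hyperref[minimal boundary]{Proposition \ref*{minimal boundary}}), so by \hyperref[remark about known version of Ronkin functions]{Remark \ref*{remark about known version of Ronkin functions}} one has the explicit form $\rho_f=f^{\trop}$, and all three properties can essentially be read off by inspection. In the archimedean case one must invoke classical properties of the complex Ronkin function, as studied by Ronkin and by Forsberg--Passare--Tsikh. The logical order I would follow is to prove (1) and the recession part of (3) first, deduce $\stab(\rho_f)=\NP(f)$ formally, and leave (2) for the end.

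For (1), the non-archimedean case is transparent since $\rho_f(u)=\min_{m}(\langle m,u\rangle-\log|c_m|)$ (the minimum over $m$ with $c_m\neq 0$) is a minimum of finitely many affine functions, hence continuous and concave on $N_\mathbb{R}$, and affine on each component of the complement of its corner locus, which coincides with $\mathcal{A}_f$. In the archimedean case, after parametrizing the fiber $\trop^{-1}(u)$ as $\{(e^{-u_1+i\theta_1},\dots,e^{-u_n+i\theta_n})\}$, one has $\rho_f(u)=-\int\log|f(e^{-u+i\theta})|\,d\theta/(2\pi)^n$; concavity follows from the plurisubharmonicity of $\log|f|$ on $(\mathbb{C}^*)^n$ (the torus average of a plurisubharmonic function is convex in the logarithms of the radii), continuity from local integrability of $\log|f|$ and dominated convergence, and the affine behaviour on each component of $N_\mathbb{R}\setminus\mathcal{A}_f$ is a classical theorem of Ronkin. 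For the first half of (3) I would compute directly that $\lim_{\lambda\to+\infty}\rho_f(\lambda u)/\lambda=\Psi_{\NP(f)}(u)$: this is immediate from the explicit formula in the non-archimedean case, and in the archimedean case follows from the fact that, for large $\lambda$, the monomial minimizing $\langle m,u\rangle$ dominates $|f(e^{-\lambda u+i\theta})|$ uniformly off a measure-zero set of $\theta$. Combined with the closedness of $\rho_f$ proved in (1), this identifies $\rec(\rho_f)=\Psi_{\NP(f)}$, and then $\stab(\rho_f)=\stab(\rec(\rho_f))=\stab(\Psi_{\NP(f)})=\NP(f)$ by a formal consequence of Legendre--Fenchel duality together with \hyperref[indicator and support function]{Example \ref*{indicator and support function}}.

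For (2), one direction comes for free: concavity of $\rho_f$ and $\rec(\rho_f)=\Psi_{\NP(f)}$ imply that $\rho_f(\lambda u)/\lambda$ decreases monotonically to $\Psi_{\NP(f)}(u)$, hence $\rho_f\geq\Psi_{\NP(f)}$ on all of $N_\mathbb{R}$. The reverse estimate $\rho_f\leq\Psi_{\NP(f)}+C$ is the main obstacle. In the non-archimedean case it is routine since $f^{\trop}$ is continuous and piecewise affine on finitely many linearity domains with the same recession function as $\Psi_{\NP(f)}$, forcing the difference to be bounded. In the archimedean case I would rely on the order map of Forsberg--Passare--Tsikh: each connected component of $N_\mathbb{R}\setminus\mathcal{A}_f$ is labelled by a lattice point $m\in\NP(f)\cap M$, the unbounded components correspond precisely to the vertices of $\NP(f)$, and on such a component $\rho_f$ equals $\langle m,\cdot\rangle-\log|c_m|$, which differs from $\Psi_{\NP(f)}$ by a constant on the open dual cone of $m$. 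Since the union of these unbounded components covers the complement of a compact neighbourhood of the tentacles of $\mathcal{A}_f$, whose directions at infinity lie inside the codimension-one skeleton of the dual fan of $\NP(f)$, continuity of both $\rho_f$ and $\Psi_{\NP(f)}$ takes care of the remaining region.
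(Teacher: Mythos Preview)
Your argument for (1) and for the recession statement in (3) is essentially the same as the paper's, modulo the different logical order: the paper proves (2) first and then deduces both parts of (3) from it (boundedness of $\rho_f-\Psi_{\NP(f)}$ forces equal stability sets, and \cite[Theorem~13.3]{R} gives the recession function), whereas you compute $\rec(\rho_f)$ directly and then try to recover (2). Either order is legitimate, but your route makes the upper bound in (2) harder than it needs to be, and the argument you sketch for it does not close.

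The gap is in your ``geographical'' treatment of the archimedean upper bound $\rho_f\leq\Psi_{\NP(f)}+C$. The union of the unbounded components of $N_\mathbb{R}\setminus\mathcal{A}_f$ does \emph{not} cover the complement of a compact set: the amoeba itself has unbounded tentacles, and the set you still have to control (amoeba plus bounded complement components) is therefore non-compact. Saying that ``continuity of both $\rho_f$ and $\Psi_{\NP(f)}$ takes care of the remaining region'' is not enough, because continuity on an unbounded set gives no bound. One could try to squeeze the tentacles between adjacent unbounded components, but that requires an extra argument you have not supplied. The paper sidesteps this entirely: once you know (from the order map, exactly as you cite) that for each vertex $m\in\mathcal{V}(f)$ the concave function $\rho_f$ agrees with the affine function $u\mapsto\langle m,u\rangle-\log|c_m|$ on some nonempty open set, concavity immediately yields $\rho_f(u)\leq\langle m,u\rangle-\log|c_m|$ \emph{for all} $u\in N_\mathbb{R}$. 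Taking the minimum over vertices gives $\rho_f(u)\leq\Psi_{\NP(f)}(u)-\log\min_{m\in\mathcal{V}(f)}|c_m|$ globally, with an explicit constant and no geography needed.

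A minor point on the lower bound: concavity gives that $\lambda\mapsto(\rho_f(\lambda u)-\rho_f(0))/\lambda$ is decreasing, not $\lambda\mapsto\rho_f(\lambda u)/\lambda$ itself, so what you actually obtain is $\rho_f\geq\Psi_{\NP(f)}+\rho_f(0)$ rather than $\rho_f\geq\Psi_{\NP(f)}$. This is harmless for (2), but the paper's direct estimate via $\|f\|_x\leq\gamma_K(f)\max_m|c_m|e^{-\langle m,u\rangle}$ is both shorter and gives an explicit constant independent of any prior knowledge of $\rec(\rho_f)$.
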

\begin{proof}
The statements in (1) are trivial in the non-archimedean case. In the archime\-dean case, the concavity of $\rho_f$ and its affinity outside the amoeba are shown in \cite[Theorem 1]{PR}. As a consequence of concavity, $\rho_f$ is continuous on $N_\mathbb{R}$, hence closed. 
\\To prove $(2)$, suppose that $f=\sum_mc_m\chi^m$ and let $\gamma_K(f)$ be the number of nonzero coefficients of $f$ if $K$ is archimedean, $1$ otherwise. For any $x\in\trop^{-1}(u)$, the inequality $\|f\|_x\leq \gamma_K(f)\cdot\max_m(|c_m|\|\chi^m\|_x)$ implies \[-\log\|f\|_x\geq \min_m(\langle m,u\rangle-\log|c_m|)-\log \gamma_K(f)\geq \Psi_{\NP(f)}(u)-\max_m\log|c_m|-\log \gamma_K(f)\]and hence \[\rho_f(u)\geq \Psi_{\NP(f)}(u)-\log(\gamma_K(f)\max_m|c_m|)\] for every $u\in N_\mathbb{R}$. For a reverse inequality, denote by $\mathcal{V}(f)$ the set of vertices of $\NP(f)$. Then, for every $m\in\mathcal{V}(f)$ the Ronkin function of $f$ coincides with $\langle m,u\rangle-\log|c_m|$ in a nonempty open subset of $N_\mathbb{R}$ (this follows from \cite[Proposition 2]{PR} in the archimedean setting and from \cite[Corollary 2.1.2]{EKL} in the non-archimedean one). By the concavity of $\rho_f$, one deduces hence that \[\rho_f(u)\leq \Psi_{\NP(f)}(u)-\log\min_{m\in\mathcal{V}(f)}|c_m|,\] for every $u\in N_\mathbb{R}$, concluding the proof of (2).
\\The statements in (3) follows directly from (2). Indeed, since $\big|\rho_f-\Psi_{\NP(f)}\big|$ is bounded, $\stab(\rho_f)=\stab\big(\Psi_{\NP(f)}\big)=\NP(f)$; the last equality follows then from \cite[Theorem 13.3]{R}.
\end{proof}

The calculation of the Ronkin function of a nonzero Laurent polynomial is typically very difficult. Anyway, an explicit expression for it is available in the following two simple situations.

\begin{ex}\label{example Ronkin of a monomial}
It follows from the definition that the Ronkin function of the monomial $\chi^m$ coincides with the linear function $m$ on $N_\mathbb{R}$ for every $m\in M$.
\end{ex}

\begin{ex}\label{example Ronkin of a binomial}
For any $m,m^\prime\in M$ with $m\neq m^\prime$, the Ronkin function of the binomial $f=\chi^m-\chi^{m^\prime}$ coincides with the support function of the segment $\overline{mm^\prime}$, that is \[\rho_f(u)=\min(\langle m,u\rangle, \langle m^\prime,u\rangle)\]for every $u\in N_\mathbb{R}$. To prove this, remark first that one can restrict to the case of the binomial $f=\chi^m-1$ with $m\neq0$ because of \hyperref[Ronkin function of a product]{Proposition \ref*{Ronkin function of a product}} and \hyperref[example Ronkin of a monomial]{Example \ref*{example Ronkin of a monomial}} and by factoring with a monomial. In the non-archimedean case the statement follows immediately from \hyperref[remark about known version of Ronkin functions]{Remark \ref*{remark about known version of Ronkin functions}}. In the archimedean one, the choice of a basis for $M$ allows to write
\[\rho_f(u)=-\frac{1}{(2\pi)^n}\int_{\theta_1,\dots,\theta_n\in[0,2\pi]}\log\big|e^{-m_1u_1+im_1\theta_1}\cdot\dots\cdot e^{-m_nu_n+im_n\theta_n}-1\big|\ d\theta_1\dots d\theta_n,\]
with $m_1,\dots,m_n$ being the coordinates of $m$ in such a basis and $u_1,\dots,u_n$ the coordinates of $u$ in the dual one. Assuming that $m_1>0$, which is always possible since $m\neq0$, Jensen's formula yields, for every $\theta_2,\dots,\theta_n$,
\begin{equation}\label{equality in the proof of the Ronkin function of a binomial}
\int_{\theta_1\in[0,2\pi]}\log\big|e^{-m_1u_1+im_1\theta_1}\cdot\dots\cdot e^{-m_nu_n+im_n\theta_n}-1\big|\ d\theta_1=-2\pi\sum_{j=1}^k\log\frac{|\alpha_j|}{e^{-m_1u_1}}
\end{equation}
with $\alpha_1,\dots,\alpha_k$ being the zeros of the univariate polynomial \[\big(e^{-m_2u_2+im_2\theta_2}\cdot\dots\cdot e^{-m_nu_n+im_n\theta_n}\big)T-1\] lying inside the closed disk of radius $e^{-m_1u_1}$, repeated according to multiplicity. The only complex zero of the above polynomial has modulus $e^{m_2u_2+\dots+m_nu_n}$; the integral in \eqref{equality in the proof of the Ronkin function of a binomial} is then zero if $m_1u_1+\dots+m_nu_n>0$, otherwise it equals $-2\pi(m_1u_1+\dots+m_nu_n)$. It follows that
\[\rho_f(u)=\min(m_1u_1+\dots+m_nu_n,0),\]
hence the claim.
\end{ex}

\section{Heights of toric varieties}\label{section about Arakelov on toric varieties}

A well-suited framework to develop Arakelov geometry is provided by the study of varieties over adelic fields. In this setting, local and global heights of cycles of arbitrary dimension can be defined following \cite{Z}, \cite{G0} and \cite{C-L}. A more general approach involving $M$-fields has been suggested in \cite{G1}. Even if the theory is often phrased in terms of line bundles, we adopt here the equivalent point of view of divisors, which turns out to be more convenient in the toric case, see for instance \cite{BPRS}.

\subsection{Adelic fields}

By a \textit{place} on a field $K$, we mean an equivalence class of absolute values on $K$, that could be either archimedean or non-archimedean. Whenever $\mathfrak{M}$ is a collection of places on $K$, the subset of archimedean places in $\mathfrak{M}$ is denoted by $\mathfrak{M}_\infty$.

\begin{defn}\label{adelic family}
Let $K$ be a field. A family of places $\mathfrak{M}$ on $K$ is said to be \emph{adelic} if it satisfies the following properties:
\begin{enumerate}
\item for every $v\in\mathfrak{M}\setminus\mathfrak{M}_\infty$, one (and hence all) absolute value in the class of $v$ is associated to a nontrivial discrete valuation
\item for each $\alpha\in K^*$, the set of places $v$ for which $|\alpha|_v\neq1$ for any $|\cdot|_v\in v$ is finite.
\end{enumerate}
\end{defn}

It is clear that the two conditions of the previous definition do not depend on the choice of the representant of the class $v$.

\begin{defn}\label{adelic field}
An \emph{adelic field} is a field $K$ together with an adelic family of places $\mathfrak{M}$ on $K$ and a choice of an absolute value $|\cdot|_v$ and of a real positive number $n_v$ for each place $v\in\mathfrak{M}$. An adelic field $\left(K,(|\cdot|_v,n_v)_{v\in\mathfrak{M}}\right)$ is said to satisfy the \emph{product formula} if for every $\alpha\in K^*$ \[\sum_{v\in\mathfrak{M}}n_v\log|\alpha|_v=0.\]
\end{defn}

Whenever there is no ambiguity on its adelic structure, an adelic field will be simply denoted by $K$.
\\The following property is an easy, though fundamental, consequence of the definition.

\begin{lemma}\label{adelic fields have finitely many archimedean places}
Any adelic field $K$ only admits finitely many archimedean places.
\end{lemma}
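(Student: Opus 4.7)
The plan is to exploit the finiteness condition (2) in the definition of an adelic field by producing a single element of $K^*$ whose absolute value is different from $1$ at every archimedean place; the claim then follows at once. If $K$ has no archimedean place, the statement is vacuous, so I would reduce immediately to the case $\mathfrak{M}_\infty \neq \emptyset$.

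The key observation is that the existence of a single archimedean absolute value on $K$ forces $K$ to have characteristic $0$, hence to contain $\mathbb{Q}$ as a prime subfield. Fix then $\alpha = 2 \in K^*$ (any rational integer of absolute value strictly greater than $1$ would do). For each $v \in \mathfrak{M}_\infty$, the restriction of $|\cdot|_v$ to $\mathbb{Q}$ is an archimedean absolute value on $\mathbb{Q}$, and by Ostrowski's theorem it is equivalent to the standard archimedean absolute value on $\mathbb{Q}$. In particular there exists $s_v > 0$ with $|n|_v = |n|^{s_v}$ for every $n \in \mathbb{Z}$, so $|2|_v = 2^{s_v} > 1$, and in particular $|2|_v \neq 1$.

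By property (2) in \hyperref[adelic family]{Definition \ref*{adelic family}}, the set $\{v \in \mathfrak{M} : |2|_v \neq 1\}$ is finite. Since this set contains $\mathfrak{M}_\infty$ by the discussion above, $\mathfrak{M}_\infty$ itself must be finite, which is the claim.

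The argument is essentially routine and I do not foresee any real obstacle; the only conceptual point that must not be overlooked is the appeal to Ostrowski's theorem, which is what guarantees that an archimedean place of $K$ really does separate some fixed rational integer from a unit. One should also note that condition (2) is insensitive to the choice of representative $|\cdot|_v$ in the class $v$, since equivalent absolute values have the same unit group, so the statement $|2|_v \neq 1$ is intrinsic to the place.
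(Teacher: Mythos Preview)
Your proposal is correct and follows essentially the same argument as the paper: reduce to characteristic zero so that $\mathbb{Q}\subseteq K$, invoke Ostrowski to get $|2|_v>1$ at every archimedean place, and then apply axiom (2) of \hyperref[adelic family]{Definition \ref*{adelic family}} to the element $2$. The paper phrases the reduction slightly differently (observing that a field of positive characteristic has no archimedean places), but the substance is identical.
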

\begin{proof}
Since an absolute value on $K$ is non-archimedean if and only if it is bounded on the image of $\mathbb{Z}$ in $K$, a field with positive characteristic has no archimedean absolute values. Suppose hence that $K$ has characteristic zero. In this case it contains a copy of $\mathbb{Q}$ and any archimedean absolute value $|\cdot|_v$ on $K$ restricts to an archimedean absolute value on $\mathbb{Q}$. By Ostrowski's theorem, one has $|2|_v>1$. The second axiom in \hyperref[adelic field]{Definition \ref*{adelic family}} allows hence to conclude the claim.
\end{proof}

For an adelic field $K$ and a finite field extension $F$ of $K$, there exists a canonical way of endowing $F$ with the structure of an adelic field, see \cite[Remark 2.5]{G1} and \cite[\S3]{MS} for the detailed construction. With this induced adelic structure, $F$ satisfies the product formula whenever $K$ does.

\begin{ex}\label{examples of adelic fields}
The archetypical example of an adelic field satisfying the product formula is given by the field $\mathbb{Q}$, together with the collection of all its nontrivial places, the standard normalized absolute value for each of them and weights equal to $1$.
\end{ex}

More generally, any \emph{global field}, that is a number field or the function field of a smooth projective curve over a field $k$ with the structure described in \cite[Example 1.5.4]{BPS}, is an adelic field satisfying the product formula.

\subsection{Local and global heights}

Let $K$ be an adelic field satisfying the product formula and $X$ a proper variety of dimension $n$ over $K$. For every place $v\in\mathfrak{M}$, denote by $K_v$ the completion of $K$ with respect to $|\cdot|_v$ and by $\mathbb{C}_v$ the completion of an algebraic closure of $K_v$ with respect to the unique extension of the absolute value. It is a well-known fact that $\mathbb{C}_v$ is algebraically closed; moreover, $\mathbb{C}_v$ comes with an absolute value that one denotes, with abuse of notation, by $|\cdot|_v$. The pair $(\mathbb{C}_v,|\cdot|_v)$ is hence an algebraically closed complete field as in \hyperref[section about Ronkin functions]{section \ref*{section about Ronkin functions}}. 
\\The base change $X_{\mathbb{C}_v}$ is a scheme of finite type over $\spec \mathbb{C}_v$ to which one associates its Berkovich analytification $(X_v^{\an},\mathscr{O}_{X_v^{\an}})$, whose underlying topological space is compact because of the properness of $X$. To stress its dependence on the choice of the place $v$, $X_v^{\an}$ is called the \emph{$v$-adic analytification} of $X$. Similarly, one can consider the base change $X_{K_v}$ of $X$ over $\spec K_v$ and consider its Berkovich analytification $X_{K_v}^{\an}$. The two spaces are related by the isomorphism\[X_{K_v}^{\an}\simeq X_v^{\an}/\Gal(\overline{K}_v^{\sep}/K_v),\] as shown in \cite[Proposition 1.3.5]{Ber}. Moreover, there exists a surjective morphism of locally ringed space $\pi_v:X_v^{\an}\to X_{\mathbb{C}_v}$.

\begin{rem}
By Ostrowski's and Gelfand-Mazur theorems, if $v$ is an archimedean absolute value on $K$, $\mathbb{C}_v$ is isometric to the field $\mathbb{C}$ endowed with a power of the usual absolute value. In this case, the Berkovich space $(X_v^{\an},\mathscr{O}_{X_v^{\an}})$ is isomorphic to the usual complex analytification of $X_\mathbb{C}$.
\end{rem}

For any line bundle $L$ on $X$, its $v$-adic analytification is the analytic line bundle \[L_v^{\an}:=\pi_v^*L_{\mathbb{C}_v}\] on $X_v^{\an}$. Continuous metrics on $L_v^{\an}$ are defined as in \cite[\S1.1.1]{C-L1}, unregardingly on the nature of the place $v$. Relevant classes of metrics on $L_v^{\an}$ are \emph{smooth} metrics in the archimedean case and \emph{algebraic} (or, equivalently, formal, see \cite[Proposition 8.13]{GK}) metrics when $v$ is non-archimedean, see for example \cite[\S1]{C-L1} and \cite[8.8 and 8.12]{GK} for the precise definitions. A divisor $D$ on $X$ together with a continuous $\Gal(\overline{K}_v^{\sep}/K_v)$-invariant metric $\|\cdot\|_v$ on the analytic line bundle $\mathscr{O}(D)_v^{\an}$ is called a \emph{$v$-adic metrized divisor} and it is denoted by $\overline{D}_v$ or also by $(\mathscr{O}(D),\|\cdot\|_v)$. Sums and pull-backs of $v$-adic metrized divisors can be defined as in \cite[\S1.2]{C-L1}.
\\A $v$-adic metrized divisor $\overline{D}_v$ is said to be \emph{semipositive} if the corresponding metric can be approximated by semipositive smooth (when $v$ is archimedean) or algebraic (when $v$ is non-archimedean) semipositive metrics in the sense of \cite[\S1.4]{BPS}. For any $d$-dimensional subvariety $Y$ of $X$ and for any $d$-tuple of $v$-adic semipositive metrized divisors $\overline{D}_{0,v},\dots,\overline{D}_{d-1,v}$ on $X$, there exists a positive measure 
\begin{equation}\label{measure for semipositive metrized line bundles}
c_1(\overline{D}_{0,v})\wedge\dots\wedge c_1(\overline{D}_{d-1,v})\wedge\delta_{Y}
\end{equation}
on $X_v^{\an}$, which was first introduced in \cite[D\'efinition 2.4 and Proposition 2.7 b)]{C-L} in the non-archimedean setting and extended in \cite[\S3.8]{G2} under weaker assumptions. The suggestive notation for the measure in \hyperref[measure for semipositive metrized line bundles]{(\ref*{measure for semipositive metrized line bundles})} is compatible with the wedge product of first Chern forms in the smooth archimedean case, while it is justified by the recent advances in the theory of forms and currents on Berkovich spaces otherwise, as shown in \cite[\S6.9]{CLD} and \cite[Theorem 10.5]{GK}.
\\Recall also that for a $d$-dimensional cycle $Z$ in $X$ and a family $(D_0,s_0),\dots,(D_d,s_d)$ of divisors on $X$ with rational sections of the associated line bundles, one says that $s_0,\dots,s_d$ \emph{meet $Z$ properly} if for every $J\subseteq\{0,\dots,d\}$, each irreducible component of $|Z|\cap\bigcap_{i\in J}|\divisor(s_i)|$ has dimension $d-\#J$, $|\cdot|$ denoting here the support of a cycle.

\begin{defn}\label{local height definition}
Let $Z$ be a $d$-dimensional cycle in $X$ and $\big(\overline{D}_{0,v},s_0\big),\dots,\big(\overline{D}_{d,v},s_d\big)$ a collection of $v$-adic semipositive metrized divisors on $X$ with rational sections of the corresponding line bundles, with $s_0,\dots,s_d$ meeting $Z$ properly. The \emph{$v$-adic local height of $Z$ in $X$ with respect to $\big(\overline{D}_{i,v},s_i\big)$} for $i=0,\dots,d$ is defined, linearly in its irreducible components, by the recursive formula 
\begin{multline*}
h_{\overline{D}_{0,v},\dots,\overline{D}_{d,v}}(Z;s_0,\dots,s_d):=h_{\overline{D}_{0,v},\dots,\overline{D}_{d-1,v}}(Z\cdot\divisor(s_d);s_0,\dots,s_{d-1})\\-\int_{X_v^{\an}}\log\|s_d\|_{d,v}\ c_1(\overline{D}_{0,v})\wedge\dots\wedge c_1(\overline{D}_{d-1,v})\wedge\delta_{Z},
\end{multline*}
where $\|\cdot\|_{d,v}$ denotes the metric of $\overline{D}_{d,v}$ and one sets the height of the zero cycle to be zero.
\end{defn}

The integrals appearing in the previous definition are well-defined, as shown in \cite[Th\'eor\`eme 4.1]{CLT} in both the archimedean and non-archimedean setting, and \cite[Theorem 1.4.3]{GH} for the case of non-archimedean valuations which are not necessarily discrete. The $v$-adic local height function is moreover symmetric and multilinear with respect to sums of metrized divisors with rational sections of the associated line bundles, see \cite[Proposition 3.4 and Remark 9.3]{G}.
\vspace{\baselineskip}
\\The adelic structure on the field $K$ allows to define a \emph{semipositive metrized divisor} $\overline{D}$ on $X$ by the choice, for every place $v\in\mathfrak{M}$, of a continuous semipositive metric on $\mathscr{O}(D)_v^{\an}$. This global definition induces a notion of a $v$-adic local height function at each place of $K$. Some care has to be taken when defining global heights as sums of such $v$-adic local heights, since they do not need to be well-defined in general. 

\begin{defn}\label{global heights of integrable cycles}
A $d$-dimensional irreducible subvariety $Y$ of $X$ is said to be \emph{integrable} with respect to the choice of $d+1$ semipositive metrized divisors $\overline{D}_0,\dots,\overline{D}_d$ if there exists a birational proper map $\varphi:Y^\prime\to Y$, with $Y^\prime$ projective, and sections $s_i$ of $\varphi^*\mathscr{O}(D_i)$ for each $i=0,\dots,d$, meeting $Y^\prime$ properly, such that the $v$-adic local height \[h_{\varphi^*\overline{D}_{0,v},\dots,\varphi^*\overline{D}_{d,v}}(Y^\prime;s_0,\dots,s_d)\] is zero for all but finitely many places $v\in\mathfrak{M}$. A $d$-dimensional cycle is said to be \emph{integrable} if each of its irreducible components is. If $Y$ is an integrable $d$-dimensional irreducible subvariety, the \emph{global height of $Y$ in $X$ with respect to $\overline{D}_0,\dots,\overline{D}_d$} is defined as \[h_{\overline{D}_0,\dots,\overline{D}_d}(Y):=\sum_{v\in\mathfrak{M}}n_v\ h_{\varphi^*\overline{D}_{0,v},\dots,\varphi^*\overline{D}_{d,v}}(Y^\prime;s_0,\dots,s_d).\]
The \emph{global height} of integrable cycles is defined by linearity.
\end{defn} 

The previous definition does not depend on the choice of the projective resolution $Y^\prime$ of $Y$ nor of the sections $s_0,\dots,s_d$, as a consequence of \cite[Proposition 3.6 and Remark 9.3]{G}, \cite[Theorem 1.4.17 (3)]{BPS} and the product formula on $K$. As its local counterparts, the global height is symmetric and multilinear with respect to sums of metrized divisors. Moreover, it is well-behaved under proper transformations, in the sense of the next proposition.

\begin{proposition}\label{heights and proper morphisms}
Let $\varphi:X^\prime\to X$ be a dominant morphism of proper varieties over $K$, $\overline{D}_0,\dots,\overline{D}_d$ semipositive metrized divisors over $X$ and $Z^\prime$ a $d$-dimensional cycle in $X^\prime$. The cycle $\varphi_*Z^\prime$ is integrable with respect to $\overline{D}_0,\dots,\overline{D}_d$ if and only if $Z^\prime$ is integrable with respect to $\varphi^*\overline{D}_0,\dots,\varphi^*\overline{D}_d$ and in this case \[h_{\overline{D}_0,\dots,\overline{D}_d}(\varphi_*Z^\prime)=h_{\varphi^*\overline{D}_0,\dots,\varphi^*\overline{D}_d}(Z^\prime).\]
\end{proposition}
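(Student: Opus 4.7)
The plan is to reduce to an irreducible cycle and then establish a $v$-adic local projection formula place by place, summing afterwards with the adelic weights.

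By $\mathbb{Z}$-linearity of both the local and global heights (recalled right after \hyperref[local height definition]{Definition \ref*{local height definition}}) and of proper pushforward of cycles, I would first reduce to the case where $Z^\prime = Y^\prime$ is an irreducible $d$-dimensional subvariety of $X^\prime$. Setting $Y := \overline{\varphi(Y^\prime)}$, there are two possibilities: either $\varphi|_{Y^\prime}$ is generically finite of some degree $e \geq 1$ onto $Y$, in which case $\varphi_* Y^\prime = e\,[Y]$, or $\dim Y < d$ and $\varphi_* Y^\prime = 0$. In both cases I would choose compatible projective birational resolutions by first picking $\pi: W \to Y$ with $W$ projective (or with $W$ a projective model of any closed subvariety of $X$ containing $Y$ in the second case) and then, after resolving indeterminacies of the induced rational map from $Y^\prime$ to $W$, a projective birational resolution $\pi^\prime: W^\prime \to Y^\prime$ admitting a morphism $\widetilde{\varphi}: W^\prime \to W$ with $\pi \circ \widetilde{\varphi} = \varphi \circ \pi^\prime$. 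Rational sections $s_0, \dots, s_d$ of $\mathscr{O}(D_0), \dots, \mathscr{O}(D_d)$ can be chosen so that $\pi^*s_0, \dots, \pi^*s_d$ meet $W$ properly, and by generic finiteness (respectively dimension reasons in the degenerate case) the pullbacks $\widetilde{\varphi}^*\pi^*s_i$ meet $W^\prime$ properly as well.

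The heart of the argument is the $v$-adic local projection formula at each place: I claim
\begin{equation*}
h_{\widetilde{\varphi}^*\pi^*\overline{D}_{0,v}, \dots, \widetilde{\varphi}^*\pi^*\overline{D}_{d,v}}\bigl(W^\prime; \widetilde{\varphi}^*\pi^*s_0, \dots, \widetilde{\varphi}^*\pi^*s_d\bigr) = h_{\pi^*\overline{D}_{0,v}, \dots, \pi^*\overline{D}_{d,v}}\bigl(\widetilde{\varphi}_*W^\prime; \pi^*s_0, \dots, \pi^*s_d\bigr),
\end{equation*}
where $\widetilde{\varphi}_*W^\prime$ equals $e\,[W]$ or $0$ according to the two cases above. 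I would prove this by induction on $d$ using the recursive formula of \hyperref[local height definition]{Definition \ref*{local height definition}}. Two ingredients enter: proper pushforward of cycles commutes with intersection against the divisor of a rational section, so the intersection term $W^\prime \cdot \divisor(\widetilde{\varphi}^*\pi^*s_d)$ pushes forward to $\widetilde{\varphi}_*W^\prime \cdot \divisor(\pi^*s_d)$; and the Chambert-Loir measures at $v$ satisfy the analytic projection formula
\begin{equation*}
\widetilde{\varphi}_{v,*}\bigl(c_1(\widetilde{\varphi}^*\pi^*\overline{D}_{0,v})\wedge\dots\wedge c_1(\widetilde{\varphi}^*\pi^*\overline{D}_{d-1,v})\wedge\delta_{W^\prime}\bigr) = c_1(\pi^*\overline{D}_{0,v})\wedge\dots\wedge c_1(\pi^*\overline{D}_{d-1,v})\wedge\delta_{\widetilde{\varphi}_*W^\prime},
\end{equation*}
which holds at both archimedean and non-archimedean places as a consequence of the constructions in \cite{C-L}, \cite{CLT}, \cite{G} and \cite{GH}. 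Combined with the change-of-variables identity for the integral of $-\log\|\widetilde{\varphi}^*\pi^*s_d\|_v$, this yields the inductive step; the base case $d=0$ is a direct verification from the definitions.

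From the place-wise equality both conclusions follow simultaneously: integrability of $\varphi_* Z^\prime$ with respect to $\overline{D}_0, \dots, \overline{D}_d$ is equivalent to integrability of $Z^\prime$ with respect to $\varphi^*\overline{D}_0, \dots, \varphi^*\overline{D}_d$, since the two local heights agree at every $v \in \mathfrak{M}$ and hence vanish simultaneously outside a finite set of places; multiplying by $n_v$ and summing then gives the claimed global equality. The main obstacle I expect is the uniform formulation and verification of the projection formula for Chambert-Loir measures across archimedean and non-archimedean places, but this is essentially folklore and contained in the references cited just before the statement.
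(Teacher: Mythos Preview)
Your sketch is correct in outline and reconstructs the argument that the paper delegates entirely to external references: the paper's proof consists of two citations, to \cite[Proposition 1.5.8 (2)]{BPS} for the integrability equivalence and to \cite[Proposition 3.6 and Remark 9.3]{G} for the local projection formula on heights. What you have written is essentially a compressed version of what those references establish, namely the induction on $d$ via the recursive \hyperref[local height definition]{Definition \ref*{local height definition}}, the compatibility of proper pushforward with intersection by $\divisor(s_d)$, and the pushforward behaviour of Chambert-Loir measures. So your route is not genuinely different in mathematical content, only in that you unpack the black boxes the paper invokes.

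Two points deserve a bit more care in your write-up. First, the claim that sections $s_i$ meeting $W$ properly automatically pull back to sections meeting $W^\prime$ properly is not immediate: one typically argues instead that the choice of sections can be made on $W^\prime$ directly (via the moving lemma on the projective model $W^\prime$), and then the independence of local heights from the choice of sections, together with the equality you establish for \emph{some} choice, gives the result. Second, in the degenerate case $\dim Y < d$ you should make explicit that the positivity of the Chambert-Loir measure, combined with the fact that its pushforward is the zero measure, forces the measure itself to vanish on $W^{\prime,\an}_v$; this is what makes both the integral term and, inductively, the whole local height equal to zero on the left-hand side.
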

\begin{proof}
The statement about integrability is \cite[Proposition 1.5.8 (2)]{BPS}, while the equality of the global heights follows from the same property on local heights, as proved in \cite[Proposition 3.6 and Remark 9.3]{G} in the more general context of pseudo-divisors.
\end{proof}

\subsection{Heights on toric varieties}\label{section height on toric varieties}

In this section the basic constructions and results of the arithmetic geometry of toric varieties are recalled, following the treatment of \cite{BPS}. Let hence $X_\Sigma$ be a proper toric variety of dimension $n$ over an adelic field $K$, with torus $\mathbb{T}$ and dense open orbit $X_0$. Denote by $N$ and $M$ the character and cocharacter groups of $\mathbb{T}$ and by $N_\mathbb{R}$ and $M_\mathbb{R}$ the associated (reciprocally dual) real vector spaces. The toric variety $X_\Sigma$ is associated with a complete fan $\Sigma$ in $N_\mathbb{R}$, whose collection of $k$-dimensional cones is written $\Sigma^{(k)}$. For any $\tau\in\Sigma^{(1)}$, denote by $v_\tau$ its minimal nonzero integral vector and by $V(\tau)$ its associated orbit closure, consistently with \cite[\S3.1]{Ful}.
\\Divisors on $X_\Sigma$ which are invariant under the torus action are called \emph{toric divisors} and admit a nice combinatorial description, as follows. By \cite[\S I.2, Theorem 9]{KKMS}, there exists a bijection between the set of toric Cartier divisors on $X_\Sigma$ and the set of \emph{virtual support functions} on $\Sigma$, that is piecewise linear real-valued functions on the support of $\Sigma$, with integral slope on each cone of $\Sigma$. The toric Cartier divisor constructed from the virtual support function $\Psi$ is denoted by $D_\Psi$ and it defines a distinguished rational section $s_\Psi$ of $\mathscr{O}(D_\Psi)$ satisfying $\divisor(s_\Psi)=D_\Psi$. The corresponding Weil divisor is given by
\begin{equation}\label{divisor of toric section}
[D_\Psi]=\sum_{\tau\in\Sigma^{(1)}}-\Psi(v_\tau)V(\tau).
\end{equation}
In particular, the rational section $s_\Psi$ is regular and nowhere vanishing on $X_0$. A toric divisor $D_\Psi$ also determines a polyhedron \[\Delta_\Psi:=\{x\in M_\mathbb{R}:x-\Psi\geq0\}\] in $M_\mathbb{R}$, which is bounded because of the properness of $X_\Sigma$, see \cite[Proposition at page 67]{Ful}, and coincides with the stability set of $\Psi$ if $\Psi$ is concave. Many algebro-geometric properties of a toric divisor are read from its associated virtual support function: for instance, $D_\Psi$ is generated by global sections if and only if $\Psi$ is concave, and it is ample if and only if $\Psi$ is concave and restricts to different linear functions on different maximal cones of $\Sigma$.
\\Regarding the arithmetic part of the toric dictionary, let $D_\Psi$ be a toric divisor on $X_\Sigma$, with associated virtual support function $\Psi$. The continuous metrics on $\mathscr{O}(D_\Psi)$ admitting a combinatorial description are the ones which are invariant under the action of a certain compact torus, see \cite[\S4.2]{BPS} for more details about this notion. In concrete terms, a continuous metric $\|\cdot\|_v$ on $\mathscr{O}(D_\Psi)_v^{\an}$ is called a \emph{$v$-adic toric metric} if the map
\[(X_0)_v^{\an}\to\mathbb{R},\quad p\mapsto\|s_\Psi(p)\|_v\]
is constant along the fibers of the $v$-adic tropicalization map $\trop_v:(X_0)_v^{\an}\to N_\mathbb{R}$ introduced in \hyperref[definition of tropicalization]{Definition \ref*{definition of tropicalization}}. A toric divisor $D$ together with a $v$-adic toric metric on $\mathscr{O}(D)$ is called a \emph{$v$-adic toric metrized divisor}. To a $v$-adic toric metrized divisor $\overline{D}_v$ one can associate the real-valued map $\psi_{\overline{D}_v}$ on $N_\mathbb{R}$ satisfying the equality
\begin{equation}\label{definition of the metric function}
\psi_{\overline{D}_v}\circ\trop_v=\log\|s_D\|_v
\end{equation}
on the analytic torus $(X_0)_v^{\an}$, $s_D$ being the distinguished rational section of $\mathscr{O}(D)$.
\\The map $\psi_{\overline{D}_v}$, which will be referred to as the \emph{metric function} of $\overline{D}_v$, has been introduced by Burgos Gil, Philippon and Sombra in their study of Arakelov geometry of toric varieties to encode many arithmetic properties of $\overline{D}_v$, see \cite[Chapter 4]{BPS}. For instance, it is smooth in the archimedean case if the metric is smooth, while in the non-archimedean setting it is rational piecewise affine if the metric is algebraic, see \cite[Theorem 4.5.10 (1)]{BPS} and \cite[Proposition 2.5.5]{GH}. Also, the semipositivity of $\overline{D}_v$ is translated into the concavity of its corresponding metric function.

\begin{theorem}\label{characterization of toric metrics}
Let $D$ be the toric divisor associated to the virtual support function $\Psi$. The assignment $\|\cdot\|_v\mapsto\psi_{\overline{D}_v}$ is a bijection between the space of $v$-adic semipositive toric metrics on $\mathscr{O}(D)_v^{\an}$ and the space of concave functions $\psi$ on $N_\mathbb{R}$ such that $|\psi-\Psi|$ is bounded.
\end{theorem}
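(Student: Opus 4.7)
The plan is to build the bijection first on the full space of continuous toric metrics and then cut down to the semipositive case. For a $v$-adic toric metric $\|\cdot\|_v$, the toric invariance ensures $p\mapsto\log\|s_\Psi(p)\|_v$ factors through $\trop_v$, and the continuous section $\iota$ of diagram \eqref{diagram} gives a continuous function $\psi_{\overline{D}_v}$ on $N_\mathbb{R}$ satisfying \eqref{definition of the metric function}. Injectivity is immediate: by \eqref{divisor of toric section} the section $s_\Psi$ is regular and nowhere vanishing on the dense open $X_0$, hence $\psi_{\overline{D}_v}$ determines the metric on $(X_0)_v^{\an}$, and continuity extends it uniquely to all of $X_v^{\an}$.

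To identify which $\psi$ arise in this way, I would argue chart by chart on the standard toric affine covering. In a toric affine chart $U_\sigma$ containing a ray $\tau$ in its closure, the order of $s_\Psi$ along $V(\tau)$ is encoded by \eqref{divisor of toric section}, and the continuous extension of the metric across $V(\tau)_v^{\an}$ translates, in tropical coordinates, into $\psi_{\overline{D}_v}-\Psi$ being bounded along the direction $v_\tau$. Conversely, a continuous $\psi$ with $|\psi-\Psi|$ bounded defines a toric metric on the analytic torus through \eqref{definition of the metric function} and, by the same chartwise computation, extends continuously to each boundary stratum.

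The core step is matching semipositivity of the metric with concavity of $\psi_{\overline{D}_v}$. Since every $v$-adic semipositive toric metric is by definition a uniform limit of smooth (archimedean) or algebraic (non-archimedean) semipositive toric metrics, I would carry out the identification first in those two regular cases and then pass to the limit. In the smooth archimedean setting, the curvature form of a toric metric on $(X_0)_v^{\an}$, written in coordinates $(e^{-u_j+i\theta_j})_j$, is governed by the real Hessian of $\psi_{\overline{D}_v}$, so semipositivity of the curvature is equivalent to concavity; in the algebraic non-archimedean setting, rational piecewise affine concavity of $\psi_{\overline{D}_v}$ corresponds to semipositivity of the underlying toric model via the combinatorial characterization of toric Cartier divisors. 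Uniform convergence preserves both concavity and the bounded difference with $\Psi$, giving one direction. Conversely, any concave $\psi$ with $|\psi-\Psi|$ bounded can be uniformly approximated by rational piecewise affine concave functions $\psi_k$ with $|\psi_k-\Psi|$ uniformly bounded, each arising from a semiample toric divisor on a smooth refinement of $\Sigma$; the corresponding metrics are smooth or algebraic semipositive, and their uniform limit realizes $\|\cdot\|_v$ as semipositive.

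The main obstacle I anticipate is maintaining the compatibility between semipositivity and the boundedness condition near the toric boundary throughout the approximation, since concavity of $\psi$ on $N_\mathbb{R}$ alone does not guarantee continuity of the associated metric on the full compact $X_v^{\an}$. Treating archimedean and non-archimedean places uniformly should be facilitated by the minimal-boundary description of tropical fibers from \hyperref[section on boundaries]{section \ref*{section on boundaries}}, allowing the relevant analytic estimates to be phrased on the same footing in both cases.
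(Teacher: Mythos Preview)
The paper does not prove this statement; it simply cites \cite[Theorem 4.8.1 (1)]{BPS} for the archimedean and discrete non-archimedean cases and \cite[Theorem 2.5.8]{GH} for the general non-archimedean case. Your proposal, by contrast, sketches the actual argument underlying those references: first establish the bijection between continuous toric metrics and continuous functions $\psi$ with $|\psi-\Psi|$ bounded (this is essentially \cite[Proposition 4.3.10]{BPS}), then characterize semipositivity via concavity by treating smooth or algebraic metrics directly and passing to uniform limits. This is indeed the strategy of \cite{BPS}, so your outline is faithful to the proof in the cited source even though the present paper does not reproduce it. The obstacle you flag---keeping the boundedness condition uniform through the approximation---is genuine and is handled in \cite{BPS} by approximating with rational piecewise affine concave functions having the same recession function $\Psi$, which forces $|\psi_k-\Psi|$ to be bounded uniformly in $k$; your mention of ``semiample toric divisor on a smooth refinement'' should be understood as pulling back the fixed divisor $D_\Psi$ to such a refinement rather than changing the divisor.
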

\begin{proof}
This is \cite[Theorem 4.8.1 (1)]{BPS}. The extension to the general non-archime\-dean case is \cite[Theorem 2.5.8]{GH}.
\end{proof}

If $\overline{D}_v$ is a $v$-adic semipositive toric metrized divisor, the Legendre-Fenchel dual of the metric function of $\overline{D}_v$ is called the \emph{roof function} of $\overline{D}_v$ and denoted by $\vartheta_{\overline{D}_v}$: it is a concave function on $M_\mathbb{R}$ with effective domain the polytope $\Delta_\Psi$.
\\A toric divisor admits a $v$-adic semipositive metric if and only if it is generated by global sections, as proved in \cite[Corollary 4.8.5]{BPS}. For such divisors, moreover, there exists a distinguished choice of a $v$-adic semipositive metric.

\begin{defn}\label{canonical metric definition}
Let $D$ be a toric divisor generated by global sections, $\Psi$ its associated virtual support function. The \emph{$v$-adic canonical metric} on $D$ is the semipositive toric metric on $\mathscr{O}(D)_v^{\an}$ corresponding to $\Psi$ in the bijection of \hyperref[characterization of toric metrics]{Theorem \ref*{characterization of toric metrics}}.
\end{defn}

In the non-archimedean case, the canonical metric on $D$ coincides with the algebraic metric induced by the canonical model of $X_\Sigma$ and $D$, see \cite[Example 4.5.4]{BPS}.
\\For semipositive $v$-adic toric metrized divisors, the measure in \eqref{measure for semipositive metrized line bundles} can be expressed in terms of the associated metric functions. Indeed, recall from \hyperref[section on boundaries]{subsection \ref*{section on boundaries}} that there exists an embedding \[\iota_v:N_\mathbb{R}\times\mathcal{B}_{\mathbb{C}_v}\to X_{0,v}^{\an}\]which fits into the \hyperref[diagram]{commutative diagram (\ref*{diagram})}, and denote by $\Haar_{\mathcal{B}_{\mathbb{C}_v}}$ the Haar measure on $\mathcal{B}_{\mathbb{C}_v}$ normalized to have total mass $1$.

\begin{theorem}\label{Chambert-Loir measure in the toric case}
For $i=0,\dots,n-1$, let $\overline{D}_{i,v}$ be a semipositive $v$-adic toric metrized divisor on $X_\Sigma$, $\Psi_i$ the virtual support function associated to $D_i$ and $\psi_{i,v}$ the metric function of $\overline{D}_{i,v}$. Then, the positive measure \[c_1(\overline{D}_{0,v})\wedge\dots\wedge c_1(\overline{D}_{n-1,v})\wedge\delta_{X_\Sigma}\]is zero outside $X_{0,v}^{\an}$ and \[c_1(\overline{D}_{0,v})\wedge\dots\wedge c_1(\overline{D}_{n-1,v})\wedge\delta_{X_\Sigma}|_{X_{0,v}^{\an}}=(\iota_v)_*\big(\MM_M(\psi_{0,v},\dots,\psi_{n-1,v})\times\Haar_{\mathcal{B}_{\mathbb{C}_v}}\big).\]
In particular, \[(\trop_v)_*\big(c_1(\overline{D}_{0,v})\wedge\dots\wedge c_1(\overline{D}_{n-1,v})\wedge\delta_{X_\Sigma}|_{X_{0,v}^{\an}}\big)=\MM_M(\psi_{0,v},\dots,\psi_{n-1,v})\]as measures on $N_\mathbb{R}$. 
\end{theorem}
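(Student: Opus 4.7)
The plan is to reduce to the diagonal case by polarization and then invoke the toric description of the Chambert-Loir measure of \cite{BPS}, reformulated through the embedding $\iota_v$ introduced in \hyperref[section on boundaries]{subsection \ref*{section on boundaries}}. Both sides of the claimed identity are symmetric and multilinear in the semipositive $v$-adic toric metrized divisors $\overline{D}_{0,v},\dots,\overline{D}_{n-1,v}$: the right-hand side by the defining formula \eqref{mixed Monge-Ampere measure} together with the linearity of the assignment $\overline{D}_v\mapsto\psi_{\overline{D}_v}$ from \hyperref[characterization of toric metrics]{Theorem \ref*{characterization of toric metrics}}, and the left-hand side by the standard polarization for wedge products of first Chern classes of semipositive metrized divisors. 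It therefore suffices to treat the diagonal case $\overline{D}_{0,v}=\cdots=\overline{D}_{n-1,v}=\overline{D}_v$ with metric function $\psi_v$, in which case the right-hand side reduces to $(\iota_v)_*\big(n!\,\mathcal{M}_M(\psi_v)\times\Haar_{\mathcal{B}_{\mathbb{C}_v}}\big)$.

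For the vanishing of the measure on $X_v^{\an}\setminus X_{0,v}^{\an}$ I would uniformly approximate $\psi_v$ by concave functions yielding smooth (resp.\ algebraic) semipositive toric metrics, for which the Chambert-Loir measure is known to be supported on $X_{0,v}^{\an}$ by the explicit computations in \cite[\S4.7]{BPS}, and then pass to the limit using the continuity of the Chambert-Loir operator. To identify the measure on $X_{0,v}^{\an}$, I would exploit the $\mathcal{B}_{\mathbb{C}_v}$-invariance of the Chambert-Loir measure: in the archimedean setting this is the invariance under the compact real torus $(\mathbb{S}^1)^n$ forced by the toric nature of the metric, while in the non-archimedean setting $\mathcal{B}_{\mathbb{C}_v}=\{1\}$ is trivial and the measure is already supported on the skeleton $\iota_v(N_\mathbb{R})$. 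Combined with the commutative \hyperref[diagram]{diagram (\ref*{diagram})}, this invariance forces the restriction to $X_{0,v}^{\an}$ of the Chambert-Loir measure to be the pushforward under $\iota_v$ of a product measure $\mu\times\Haar_{\mathcal{B}_{\mathbb{C}_v}}$; the factor $\mu$ is then identified by computing the pushforward under $\trop_v$, which equals $n!\,\mathcal{M}_M(\psi_v)$ by \cite[Theorem 4.7.4]{BPS} (and its non-archimedean extension in \cite{GH}). The final ``in particular'' assertion follows immediately from Fubini's theorem together with $\Haar_{\mathcal{B}_{\mathbb{C}_v}}(\mathcal{B}_{\mathbb{C}_v})=1$.

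The main technical obstacle is to phrase the $\mathcal{B}_{\mathbb{C}_v}$-invariance of the Chambert-Loir measure uniformly in $v$: in \cite{BPS} this equivariance is formulated separately in the archimedean and non-archimedean cases, whereas here it must be channelled through the common embedding $\iota_v$ that realizes $\trop_v$ as the quotient by the $\mathcal{B}_{\mathbb{C}_v}$-action. Once this uniform reformulation is in place, the remainder of the argument is a routine combination of polarization and the previously established results.
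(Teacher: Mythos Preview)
Your proposal is correct and aligns with the paper's approach: both use multilinearity to reduce to the diagonal case and then invoke the toric description of the Chambert-Loir measure from \cite{BPS} (extended in \cite{GH} to non-discrete valuations). The paper's proof is considerably shorter because it cites the ready-made \cite[Theorem 4.8.11]{BPS} (and \cite[Theorem 2.5.10]{GH}), which already packages the $\mathcal{B}_{\mathbb{C}_v}$-equivariance and the identification via $\iota_v$ that you propose to unpack by hand; likewise, the vanishing outside $X_{0,v}^{\an}$ is obtained by a direct citation of \cite[Theorem 1.4.10 (1)]{BPS} and \cite[Corollary 1.4.5]{GH} rather than by an approximation argument. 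The ``technical obstacle'' you flag about phrasing the invariance uniformly in $v$ is therefore not a genuine obstacle here, since the cited results already deliver the measure in the form $(\iota_v)_*(\cdot\times\Haar_{\mathcal{B}_{\mathbb{C}_v}})$ in both the archimedean and non-archimedean cases.
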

\begin{proof}
The first statement follows from \cite[Theorem 1.4.10 (1)]{BPS} and \cite[Corollary 1.4.5]{GH}. The expression for the measure in the archimedean and the discrete non-archimedean case is obtained from \cite[Theorem 4.8.11]{BPS} and multilinearity; the general non-archimedean case is deduced from \cite[Theorem 2.5.10]{GH}. The last assertion is an easy consequence of the commutativity of the \hyperref[diagram]{diagram (\ref*{diagram})}.
\end{proof}

Moving to the global case, a (\emph{semipositive}) \emph{toric metric} on a toric divisor $D$ is a choice, for each place $v\in\mathfrak{M}$, of a (semipositive) $v$-adic toric metric on the line bundle $\mathscr{O}(D)$. The toric divisor $D$ together with a (semipositive) toric metric is called a (\emph{semipositive}) \emph{toric metrized divisor} and it is denoted by $\overline{D}$. From the point of view of convex geometry, the semipositive toric metrized divisor $\overline{D}$ is completely described by the collection $(\psi_v)_{v\in\mathfrak{M}}$ of its metric functions or, equivalently, by the collection $(\vartheta_v)_{v\in\mathfrak{M}}$ of its roof functions.
\\A notion of well-behaving toric metrics was defined in \cite[Definition 4.9.1]{BPS}.

\begin{defn}\label{definition of adelic toric metric}
A toric metric $(\|\cdot\|_v)_{v\in\mathfrak{M}}$ on a toric divisor is said to be \emph{adelic} if for all but finitely many $v\in\mathfrak{M}$ the $v$-adic toric metric $\|\cdot\|_v$ is the canonical one, in the sense of \hyperref[canonical metric definition]{Definition \ref*{canonical metric definition}}.
\end{defn}

In convex terms, a toric metric on the toric divisor $D$ associated to the virtual support function $\Psi$ is adelic if and only if the family $(\psi_v)_v$ of its metric functions satisfies $\psi_v=\Psi$ for all but finitely many $v\in\mathfrak{M}$.
\\It follows from \cite[Theorem 5.2.4]{BPS} that any toric subvariety of $X_\Sigma$ is integrable with respect to the choice of adelic semipositive toric metrized divisors. In particular, one can compute the global height of the $n$-dimensional cycle $X_\Sigma$ with respect to such choices.

\begin{theorem}\label{height of a toric variety}
Let $\overline{D}_0,\dots,\overline{D}_n$ be toric divisors over $X_\Sigma$, equipped with adelic semipositive toric metrics. Then\[h_{\overline{D}_0,\dots,\overline{D}_n}(X_\Sigma)=\sum_{v\in\mathfrak{M}}n_v\MI_M(\vartheta_{0,v},\dots,\vartheta_{n,v}),\] where $\vartheta_{i,v}$ is the roof function of $\overline{D}_{i,v}$, for every $i=0,\dots,n$ and $v\in\mathfrak{M}$.
\end{theorem}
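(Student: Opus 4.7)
The plan is to proceed by induction on the dimension $n$ of $X_\Sigma$, combining the recursive definition of local heights with the combinatorial description of Chambert-Loir measures from \hyperref[Chambert-Loir measure in the toric case]{Theorem \ref*{Chambert-Loir measure in the toric case}} and the recursive formula for mixed integrals of \hyperref[recursive formula with mixed integral and mixed real Monge-Ampere measure]{Theorem \ref*{recursive formula with mixed integral and mixed real Monge-Ampere measure}}. The base case $n=0$ is immediate, and the sum on the right-hand side is effectively finite: at the cofinitely many places where all the metrics are canonical the roof functions $\vartheta_{i,v}$ vanish identically on $\Delta_{\Psi_i}$, so $\MI_M(\vartheta_{0,v},\dots,\vartheta_{n,v})=0$ by multilinearity of the mixed integral. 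The integrability on the Arakelov side is guaranteed by \cite[Theorem 5.2.4]{BPS}.

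For the inductive step, I would fix a place $v \in \mathfrak{M}$ and, after refining $\Sigma$ if necessary (allowed by \hyperref[heights and proper morphisms]{Proposition \ref*{heights and proper morphisms}}) so that its set of rays contains every facet normal of the Minkowski sum $\Delta_{\Psi_0}+\cdots+\Delta_{\Psi_{n-1}}$, choose as sections the distinguished toric ones $s_{\Psi_i}$. By \eqref{divisor of toric section}, the cycle $X_\Sigma \cdot \divisor(s_{\Psi_n})$ decomposes as $\sum_{\tau\in\Sigma^{(1)}}-\Psi_n(v_\tau)\, V(\tau)$ into toric subvarieties of dimension $n-1$, whose character lattices are naturally $M(v_\tau)$, whose associated polytopes are translates of $\Delta_{\Psi_i}^{v_\tau}$ (harmless thanks to \hyperref[translation in mixed integral does not change the value]{Proposition \ref*{translation in mixed integral does not change the value}}), and whose roof functions are the restrictions $\vartheta_{i,v}|_{\Delta_{\Psi_i}^{v_\tau}}$. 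The integral term in the recursive formula for local heights collapses, via \hyperref[Chambert-Loir measure in the toric case]{Theorem \ref*{Chambert-Loir measure in the toric case}} and the identity $\log\|s_{\Psi_n}\|_v = \psi_{n,v}\circ\trop_v$ from \eqref{definition of the metric function}, to $\int_{N_\mathbb{R}}\psi_{n,v}\,d\MM_M(\psi_{0,v},\dots,\psi_{n-1,v})$.

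Summing the resulting expressions over all places with weights $n_v$ and invoking the inductive hypothesis on each $V(\tau)$ then produces a formula that I would recognize, term by term, as the recursive expansion of $\MI_M(\vartheta_{0,v},\dots,\vartheta_{n,v})$ with respect to its $n$-th entry provided by \hyperref[recursive formula with sum over the faces]{Remark \ref*{recursive formula with sum over the faces}}. The rays $\tau$ whose dual face in the Minkowski sum has dimension strictly smaller than $n-1$ contribute zero to both sides by degeneracy of the mixed integral on lower-dimensional sumsets, so after the initial refinement the indexing sets on the two sides match bijectively.

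The hardest part is this final bookkeeping: aligning the combinatorial recursion on the convex-geometric side with the geometric recursion on the Arakelov side requires a careful tracking of which rays of $\Sigma$ survive and which facet of $\Delta_{\Psi_0}+\cdots+\Delta_{\Psi_{n-1}}$ each of them is dual to. A secondary obstacle is guaranteeing well-definedness of all the local quantities for general adelic fields beyond global fields, which relies on the non-archimedean analysis developed in \cite{GH} and already invoked in the statement of \hyperref[Chambert-Loir measure in the toric case]{Theorem \ref*{Chambert-Loir measure in the toric case}}.
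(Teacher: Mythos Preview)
The paper itself gives no argument here: it simply cites \cite[Theorem 5.2.5]{BPS}. Your inductive outline is close in spirit to the proof in that reference, but it contains a genuine gap. The distinguished toric sections $s_{\Psi_0},\dots,s_{\Psi_n}$ do not meet $X_\Sigma$ properly, since each $\divisor(s_{\Psi_i})$ is supported on the toric boundary $X_\Sigma\setminus X_0$; whenever a ray $\tau$ satisfies $\Psi_i(v_\tau)\neq0$ for two distinct indices $i$, the orbit closure $V(\tau)$ lies in the common support, violating the codimension condition already for $|J|=2$. (In the extreme case $D_0=\dots=D_n$ the sections are identical.) The local height of \hyperref[local height definition]{Definition \ref*{local height definition}} is therefore undefined for this choice of sections, and the recursive step you invoke has no meaning as written.

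The way Burgos Gil--Philippon--Sombra circumvent this is precisely through the \emph{toric local height} recalled later in this paper: for an arbitrary choice of properly intersecting sections one subtracts the local height computed with the canonical metrics from the one computed with the given metrics. That difference is section-independent, and it is for this quantity that an induction matching your combinatorial recursion goes through (cf.\ \cite[\S5.1]{BPS}, in particular Corollary 5.1.9, cited later in the paper). The passage to the global height then uses that the canonical local heights sum to zero over all places. Your sketch captures the combinatorial skeleton of the argument but omits the mechanism---replacing ``local height with toric sections'' by ``toric local height''---that makes the induction legitimate.
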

\begin{proof}
This is \cite[Theorem 5.2.5]{BPS}.
\end{proof}

\section{Divisors of rational functions}\label{section divisor of rational functions}

The present section focuses on the combinatorial description of the Weil divisor on a toric variety of the rational function coming from a Laurent polynomial. This result will be used in the proof of the main theorems in the next section.
\\To fix notation, let $X_\Sigma$ be a proper smooth toric variety of dimension $n$ over a field $K$, $M$ the character lattice of its torus $\mathbb{T}$ and $N_\mathbb{R}$ the associated dual vector space. The toric variety $X_\Sigma$ has a dense open orbit $X_0$ isomorphic to $\mathbb{T}$ and hence the function field of $X_\Sigma$ coincides with $K(M)$. In particular, any Laurent polynomial $f=\sum c_m\chi^m$ gives rise to a rational function on $X_\Sigma$, which is regular and coincides with $f$ on $X_0$. To avoid confusion, one will denote by $\tilde{f}$ such a rational function.
\\Recall from \cite[Theorem 3.1.19 (a)]{CLS} that the toric variety $X_\Sigma$ is smooth if and only if each cone of $\Sigma$ is a smooth cone, that is it is generated by a part of a basis of the lattice $N$. For each cone $\tau$ in $\Sigma$ of dimension $1$, denote by $v_\tau$ its minimal nonzero integral vector, which generates $\tau\cap N$ as a monoid. If $\sigma$ is a smooth cone of dimension $n$ in $N_\mathbb{R}$, the collection $(v_\tau)_\tau$, with $\tau$ ranging in the set of one dimensional faces of $\sigma$, is a basis of $N$ and hence gives a dual basis $(v_\tau^\vee)_\tau$ of the lattice $M$.

\begin{lemma}\label{lemma prime ideal of the orbit}
Let $\sigma$ be a strongly convex polyhedral rational cone in $N_\mathbb{R}$. For every face $\tau$ of $\sigma$ of dimension $1$, the orbit closure $V(\tau)$ in the affine toric variety $X_\sigma$ is the subvariety corresponding to the prime ideal \[\mathfrak{p}=\big(\chi^m:m\in\sigma^\vee\cap M,m\notin\tau^\perp\big)\]of $\mathscr{O}(X_\sigma)=K[\sigma^\vee\cap M]$. Moreover, if $\sigma$ is smooth and of maximal dimension in $N_\mathbb{R}$, $\mathfrak{p}$ is principal and generated by $\chi^{v_\tau^\vee}$.
\end{lemma}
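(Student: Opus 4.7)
The plan is to identify the orbit closure $V(\tau)$ with the spectrum of an explicit quotient of $\mathscr{O}(X_\sigma) = K[\sigma^\vee \cap M]$ and then read off its defining ideal; everything will follow from the standard toric correspondence between faces of $\sigma$ and orbits in $X_\sigma$, together with an elementary monomial computation in the smooth case.

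First I would invoke the standard description of $V(\tau)$ as an affine toric variety whose coordinate ring is the semigroup algebra $K[\tau^\perp \cap \sigma^\vee \cap M]$, for example as in \cite[Proposition 3.2.7]{CLS} or \cite[\S3.1]{Ful}. Under the closed immersion $V(\tau) \hookrightarrow X_\sigma$, the corresponding surjection of coordinate rings sends $\chi^m$ to $\chi^m$ when $m \in \tau^\perp$ and to $0$ otherwise. Its kernel, being spanned as a $K$-vector space by the monomials $\chi^m$ with $m \in (\sigma^\vee \cap M) \setminus \tau^\perp$, coincides exactly with the ideal $\mathfrak{p}$ in the statement. Primality follows at once, since the quotient $K[\tau^\perp \cap \sigma^\vee \cap M]$ embeds into the Laurent polynomial ring $K[\tau^\perp \cap M]$ and hence is an integral domain.

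For the moreover part I would exploit the fact that a smooth cone of maximal dimension has a free dual semigroup. Namely, if $v_{\tau_1}, \dots, v_{\tau_n}$ are the minimal integral generators of the $1$-dimensional faces of $\sigma$, smoothness forces $(v_{\tau_i})_{i=1}^n$ to be a $\mathbb{Z}$-basis of $N$, so that $(v_{\tau_i}^\vee)_{i=1}^n$ generates $\sigma^\vee \cap M$ freely as a monoid. Fixing $\tau = \tau_1$, an element $m = \sum_{i=1}^n a_i v_{\tau_i}^\vee \in \sigma^\vee \cap M$ belongs to $\tau^\perp$ if and only if $a_1 = 0$; otherwise $a_1 \geq 1$ and $\chi^m = \chi^{v_\tau^\vee} \cdot \chi^{m - v_\tau^\vee}$ with $m - v_\tau^\vee \in \sigma^\vee \cap M$. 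Thus every generator of $\mathfrak{p}$ is divisible by $\chi^{v_\tau^\vee}$, which yields $\mathfrak{p} = (\chi^{v_\tau^\vee})$.

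I do not expect any serious obstacle: once the quotient description of $V(\tau)$ is in place, both the primality and the principality statements reduce to bookkeeping with the monoid $\sigma^\vee \cap M$. The only point that truly uses the hypotheses is the freeness of the dual basis, which is precisely the combinatorial translation of smoothness of $\sigma$ at maximal dimension.
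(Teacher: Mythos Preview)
Your proposal is correct and follows essentially the same approach as the paper: both identify $V(\tau)$ via the surjection $K[\sigma^\vee\cap M]\to K[\tau^\perp\cap\sigma^\vee\cap M]$ and then, in the smooth maximal-dimensional case, show that each monomial generator of $\mathfrak{p}$ is divisible by $\chi^{v_\tau^\vee}$. The only cosmetic difference is that you invoke directly the freeness of $\sigma^\vee\cap M$ on the dual basis to write $m=\sum a_i v_{\tau_i}^\vee$, whereas the paper verifies $m-v_\tau^\vee\in\sigma^\vee$ by pairing against an arbitrary $u\in\sigma$; these are two phrasings of the same computation.
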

\begin{proof}
Recall for example from \cite[\S 3.1]{Ful} that the orbit closure $V(\tau)$ is the toric variety $\spec K[\sigma^\vee\cap\tau^\perp\cap M]$ and can be embedded in $X_\sigma=\spec K[\sigma^\vee\cap M]$ via the surjection of rings sending $\chi^m$ to itself if $m\in\tau^\perp$, and to $0$ otherwise. Then $V(\tau)$ is seen as the subvariety of $X_\sigma$ corresponding to the kernel of such homomorphism, that is
\[\mathfrak{p}=\bigoplus_{\substack{m\in\sigma^\vee\cap M\\m\notin\tau^\perp}} K\chi^m=\big(\chi^m:m\in \sigma^\vee\cap M,m\notin\tau^\perp\big),\]
proving the first statement.
\\Suppose now that $\sigma$ is smooth and of dimension $n$ in $N_\mathbb{R}$; denote by $v_1,v_2,\dots,v_n$ the basis of $N$ given by the minimal integral vectors of the rays of $\sigma$, with the assumption that $v_1=v_\tau$. By definition, \[\sigma=\mathbb{R}_{\geq0}v_1+\dots+\mathbb{R}_{\geq0}v_n.\]As a result, denoting by $(v_i^\vee)_{i=1,\dots,n}$ the basis of $M$ dual to $(v_i)_{i=1,\dots,n}$, one has that \[\langle v_i^\vee,u\rangle=\lambda_i\geq0\]for every $i\in\{1,\dots,n\}$ and for every $u=\sum_i\lambda_iv_i\in\sigma$. In particular, $v_\tau^\vee\in\sigma^\vee$. It is easy to check that $v_\tau^\vee$ is integrally valued on each element of $N$ and hence it belongs to $M$. It follows then from $\langle v_\tau^\vee,v_\tau\rangle=1$ that \[\Big(\chi^{v_\tau^\vee}\Big)\subseteq\mathfrak{p}.\]
For the reverse inclusion, consider $m\in \sigma^\vee\cap M$ with $m\notin\tau^\perp$. By assumption, $\langle m,v_\tau\rangle\in\mathbb{Z}$ and $\langle m,v_\tau\rangle\geq0$; moreover, since $m\notin\tau^\perp$, one has $\langle m,v_\tau\rangle\geq1$. For each $u=\sum_i\lambda_iv_i\in\sigma$ one has \[\langle m-v_\tau^\vee,u\rangle=\lambda_1\langle m-v_\tau^\vee,v_\tau\rangle+\sum_{i\geq2}\lambda_i\langle m,v_i\rangle\geq\lambda_1\left(\langle m,v_\tau\rangle-1\right)\geq0.\]As a result, $m-v_\tau^\vee\in\sigma^\vee\cap M$ and hence $\chi^m=\chi^{v_\tau^\vee}\cdot\chi^{m-v_\tau^\vee}\in\big(\chi^{v_\tau^\vee}\big)$, completing the proof.
\end{proof}

\begin{rem}
The last statement of the previous lemma is not true for a general strongly convex polyhedral rational cone $\sigma$ of maximal dimension in $N$. For example, if $\sigma$ has more than $n$ faces of dimension $1$, the divisor class group of $X_\sigma$, which is generated by the classes of the orbit closures associated to the rays, turns out to be nontrivial, as a consequence of \cite[Proposition at page 63]{Ful}. 
\end{rem}

For a nonzero Laurent polynomial $f\in K[M]$, the subset $V(f)$ of zeros of $f$ in $X_0$ is a closed subscheme of the dense open orbit. Its closure in $X_\Sigma$ is a closed subscheme of $X_\Sigma$, denoted by $\overline{V(f)}$. Taking into account multiplicities, one can consider the associated Weil divisor $\Big[\overline{V(f)}\Big]$. It is the zero cycle when $f$ is a monomial.

\begin{theorem}\label{divisor of Laurent polynomial}
Let $f$ be a nonzero Laurent polynomial, $\tilde{f}$ the rational function on $X_\Sigma$ arising from $f$. Then, \[\divisor(\tilde{f})=\Big[\overline{V(f)}\Big]+\sum_{\tau\in\Sigma^{(1)}}\Psi_{\NP(f)}(v_\tau)V(\tau),\]
where $\Psi_{\NP(f)}$ denotes the support function of the Newton polytope of $f$. In particular, $\Big[\overline{V(f)}\Big]$ is rationally equivalent to the cycle $-\sum_{\tau\in\Sigma^{(1)}}\Psi_{\NP(f)}(v_\tau)V(\tau)$ on $X_\Sigma$.
\end{theorem}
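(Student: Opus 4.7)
The plan is to split $\divisor(\tilde{f})$ according to whether a prime divisor meets the dense open torus $X_0$ or not, and to compute the two contributions separately. Since $\tilde{f}$ restricts to $f$ on $X_0$, for any prime divisor $W$ with $W\cap X_0\neq\emptyset$ the local ring of $X_\Sigma$ at the generic point of $W$ agrees with the local ring of $X_0$ at the generic point of $W\cap X_0$, so $\ord_W(\tilde{f})=\ord_{W\cap X_0}(f)$. Summing these contributions recovers precisely the Weil divisor $\big[\overline{V(f)}\big]$. The remaining prime divisors in the support of $\divisor(\tilde{f})$ must be torus-invariant, hence of the form $V(\tau)$ for some $\tau\in\Sigma^{(1)}$, and the only task left is to show
\[\ord_{V(\tau)}(\tilde{f})=\Psi_{\NP(f)}(v_\tau)\]
for every $\tau\in\Sigma^{(1)}$.

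To compute this order, I would localize on a maximal cone $\sigma\in\Sigma^{(n)}$ containing $\tau$, which exists since $\Sigma$ is complete and smooth. By smoothness, the minimal integral generators of the rays of $\sigma$ form a basis $v_1,\dots,v_n$ of $N$, which we may order so that $v_1=v_\tau$; let $v_1^\vee,\dots,v_n^\vee$ be the dual basis of $M$, and set $y_i:=\chi^{v_i^\vee}$. Then $X_\sigma\cong\spec K[y_1,\dots,y_n]\cong\mathbb{A}^n_K$, and by \hyperref[lemma prime ideal of the orbit]{Lemma \ref*{lemma prime ideal of the orbit}} the uniformizer of the DVR $\mathscr{O}_{X_\Sigma,V(\tau)}$ is $y_1$. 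Moreover, the residue field of this local ring is the function field of $V(\tau)\cap X_\sigma=\spec K[y_2,\dots,y_n]$, namely $K(y_2,\dots,y_n)$; in particular $y_2,\dots,y_n$ are units in $\mathscr{O}_{X_\Sigma,V(\tau)}$.

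Writing $f=\sum_m c_m\chi^m$ and using $\chi^m=y_1^{\langle m,v_1\rangle}\cdots y_n^{\langle m,v_n\rangle}$, I would set $k:=\min_{c_m\neq 0}\langle m,v_\tau\rangle=\Psi_{\NP(f)}(v_\tau)$ and factor
\[
\tilde{f}=y_1^k\cdot g,\qquad g:=\sum_m c_m\,y_1^{\langle m,v_1\rangle-k}y_2^{\langle m,v_2\rangle}\cdots y_n^{\langle m,v_n\rangle}.
\]
Every exponent of $y_1$ appearing in $g$ is a non-negative integer; the terms with $\langle m,v_\tau\rangle=k$ have $y_1$-exponent $0$, and there is at least one such term. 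Hence $g$ lies in $\mathscr{O}_{X_\Sigma,V(\tau)}$ (the $y_i$ for $i\geq 2$ being units there), and its image in the residue field is the nonzero Laurent polynomial $\sum_{\langle m,v_\tau\rangle=k}c_m\bar{y}_2^{\langle m,v_2\rangle}\cdots\bar{y}_n^{\langle m,v_n\rangle}$ in $K(y_2,\dots,y_n)$. Thus $g$ is a unit and $\ord_{V(\tau)}(\tilde{f})=k=\Psi_{\NP(f)}(v_\tau)$, as needed. The second assertion follows because $\divisor(\tilde{f})$ is rationally equivalent to zero.

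The main subtlety, in my view, is the final nonvanishing argument: one must be careful that the leading-in-$y_1$ part of $g$ survives in the residue field, which amounts to the fact that distinct monomials in $y_2,\dots,y_n$ remain linearly independent over $K$ in $K(y_2,\dots,y_n)$. Once this is recognized, the computation reduces cleanly to evaluating $\Psi_{\NP(f)}$ on the ray generator.
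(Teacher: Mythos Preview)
Your proof is correct and follows essentially the same approach as the paper: both split $\divisor(\tilde{f})$ into its torus part and its boundary part, localize at a smooth maximal cone containing $\tau$, and use \hyperref[lemma prime ideal of the orbit]{Lemma \ref*{lemma prime ideal of the orbit}} to identify $\chi^{v_\tau^\vee}$ as a uniformizer. The only cosmetic difference is that the paper first treats the case $f\in K[\sigma^\vee\cap M]$ and then reduces the general Laurent polynomial to it by multiplying by a monomial $\chi^{m_0}$, whereas you work directly in the localization $\mathscr{O}_{X_\Sigma,V(\tau)}$, exploiting that $y_2,\dots,y_n$ are already units there; this lets you factor $\tilde f=y_1^k g$ and read off the order in one step without the case split.
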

\begin{proof}
By \cite[formula at page 55]{Ful}, the irreducible components of $X_\Sigma\setminus X_0$ are exactly the orbit closures $V(\tau)$, with $\tau$ ranging in the set of $1$ dimensional cones of $\Sigma$. Since moreover the restriction of $\tilde{f}$ to $X_0$ is the regular function $f$, it follows from the classical theory of divisors that \[\divisor(\tilde{f})=\Big[\overline{V(f)}\Big]+\sum_\tau \nu_\tau(\tilde{f})V(\tau),\] where $\nu_\tau(\tilde{f})\in\mathbb{Z}$ is the order of vanishing of $\tilde{f}$ along $V(\tau)$. The statement of the theorem then follows from the fact that, for every $\tau\in\Sigma^{(1)}$, such an order equals $\Psi_{\NP(f)}(v_\tau)$.
\\This claim can be proved locally; fix a ray $\tau\in\Sigma^{(1)}$ and let $\sigma$ be any maximal dimensional cone of $\Sigma$ containing $\tau$. The fan being complete and consisting of smooth cones, such a $\sigma$ exists and the minimal integral vectors $v_1,\dots,v_n$ of its rays are a basis of $N$. Assume moreover that $v_1=v_\tau$ and, for simplicity, denote by $R:=K[\sigma^\vee\cap M]$ the ring of regular functions over $X_\sigma$. The order of vanishing of $\tilde{f}$ along $V(\tau)$ is computed as the valuation of $\tilde{f}$ determined by the valuation ring $R_{\mathfrak{p}}$, the localization of $R$ at the prime ideal $\mathfrak{p}$ corresponding to the subvariety $V(\tau)$ in $X_\sigma$. By \hyperref[lemma prime ideal of the orbit]{Lemma \ref*{lemma prime ideal of the orbit}}, the cone $\sigma$ being smooth and maximal dimensional, one has that $\mathfrak{p}=\big(\chi^{v_\tau^\vee}\big)$. The maximal ideal $\mathfrak{p}R_\mathfrak{p}$ of $R_\mathfrak{p}$ is hence the principal ideal generated by $\chi^{v_\tau^\vee}$.
\\Suppose first that $\tilde{f}=\sum_m c_m\chi^m$ lies in $R$, that is every $m$ appearing in $\tilde{f}$ belongs to $\sigma^\vee\cap M$. By definition of the valuation in $R_\mathfrak{p}$,
\begin{equation*}
\begin{split}
\nu_\tau(\tilde{f})&=\max\ \left\{l\in\mathbb{N}:\tilde{f}\in(\mathfrak{p}R_\mathfrak{p})^l\right\}=\max\ \left\{l\in\mathbb{N}:\tilde{f}\in\big(\chi^{lv_\tau^\vee}\big)\right\}\\&=\max\ \left\{l\in\mathbb{N}:\chi^{m-lv_\tau^\vee}\in R_\mathfrak{p}\text{ for all }m\text{ with } c_m\neq0\right\}.
\end{split}
\end{equation*}
The condition $\chi^{m-lv_\tau^\vee}\in R_\mathfrak{p}$ is equivalent to the fact that $\langle m,v_\tau\rangle\geq l$. Indeed, if the first is true, then \[\langle m,v_\tau\rangle-l=\langle m-lv_\tau^\vee,v_\tau\rangle\geq0.\] Conversely, for each $u=\sum_i\lambda_iv_i\in\sigma$ one has\[\langle m-lv_\tau^\vee,u\rangle=\lambda_1(\langle m,v_\tau\rangle-l)+\sum_{i\geq2}\lambda_i\langle m,v_i\rangle\geq0,\]and so $m-lv_\tau^\vee\in\sigma^\vee\cap M$. As a consequence,
\begin{equation*}
\begin{split}
\nu_\tau(\tilde{f})&=\max\ \left\{l\in\mathbb{N}:\langle m,v_\tau\rangle\geq l\text{ for all }m\text{ with } c_m\neq0\right\}\\&=\min\ \{\langle m,v_\tau\rangle: m\text{ with }c_m\neq0\}=\Psi_{\NP(f)}(v_\tau).
\end{split}
\end{equation*}
For a general $\tilde{f}=\sum_m c_m\chi^m$, the fact that $\sigma^\vee$ has dimension $n$ in $M_\mathbb{R}$ ($\sigma$ is indeed strongly convex) assures that there exists a big enough vector $m_0\in\sigma^\vee\cap M$ for which $m+m_0\in\sigma^\vee\cap M$ for each $m$ such that $c_m\neq0$. Hence\[\tilde{f}=\frac{\sum_mc_m\chi^{m+m_0}}{\chi^{m_0}},\]with both the numerator and the denominator belonging to $R$. Applying the result for such elements one deduces\[\nu_\tau(\tilde{f})=\nu_\tau\left(\sum_mc_m\chi^{m+m_0}\right)-\nu_\tau(\chi^{m_0})=\Psi_{\NP(f)+m_0}(v_\tau)-\langle m_0,v_\tau\rangle=\Psi_{\NP(f)}(v_\tau),\]concluding the proof.
\end{proof}

\section{Local and global heights of hypersurfaces}\label{section hypersurface}

Fix for the whole section an adelic field $\left(K,(|\cdot|_v,n_v)_{v\in\mathfrak{M}}\right)$ satisfying the product formula. Let $X_\Sigma$ be a proper toric variety over $K$, of dimension $n$, with torus $\mathbb{T}=\spec K[M]$ and dense open orbit $X_0$. For an effective cycle $Z$ on $X_\Sigma$ of pure codimension $1$, whose prime components intersect $X_0$, we present a series of results concerning its integrability and its local and global height with respect to suitable choices of metrized divisors on $X_\Sigma$.

\subsection{Degrees}

With the notations and assumptions given above, the effective cycle $Z$ can be written as \[Z=\sum_{i=1}^r\ell_i Y_i\] for prime divisors $Y_1,\dots,Y_r$ intersecting $X_0$. For every $i=1,\dots,r$, the closed irreducible subvariety of $X_0$ obtained as the intersection between $Y_i$ and $X_0$ is associated to a prime ideal of height one in $K[M]$, which is principal since $K[M]$ is a unique factorization domain; denote by $f_i$ an irreducible Laurent polynomial generating such an ideal. The Laurent polynomial $f=f_1^{\ell_1}\cdot\dots\cdot f_r^{\ell_r}$ is called a \emph{defining polynomial} for the cycle $Z$ and is uniquely defined up to multiplication by an invertible element of $K[M]$, that means by a monomial. Moreover,
\begin{equation}\label{cycle of defining polynomial}
\Big[\overline{V(f)}\Big]=Z,
\end{equation}
that is the cycle associated to the closure of the subscheme $V(f)$ in $X_\Sigma$ agrees with $Z$. Let \[\Psi_f:=\Psi_{\NP(f)}\] be the support function, in the sense of \hyperref[indicator and support function]{Example \ref*{indicator and support function}}, of the Newton polytope $\NP(f)$ of $f$; it is a piecewise linear function on $N_\mathbb{R}$. It is not necessarily a virtual support function on the fan $\Sigma$, but it can always be made such after a suitable refinement of the fan.

\begin{lemma}\label{a suitable resolution exists}
For any proper toric variety $X_\Sigma$ there exist a smooth projective toric variety $X_{\Sigma^\prime}$ with fan $\Sigma^\prime$ in $N_\mathbb{R}$ and a proper toric morphism $\pi:X_{\Sigma^\prime}\to X_\Sigma$ satisfying:
\begin{enumerate}
\item $\pi$ restricts to the identity on the dense open orbit of $X_{\Sigma^\prime}$ and $X_\Sigma$
\item $\Psi_f$ is a virtual support function on $\Sigma^\prime$.
\end{enumerate}
\end{lemma}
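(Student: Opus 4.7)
The plan is to construct $\Sigma'$ in two (or three) stages of refinement of $\Sigma$, each time using a standard tool of toric geometry, and to verify that the desired properties of $\pi$ and $\Psi_f$ are preserved by refinement. The map $\pi$ will be the one induced by the refinement of fans: any refinement $\Sigma'$ of a fan $\Sigma$ in $N_\mathbb{R}$ gives a proper toric morphism $X_{\Sigma'}\to X_\Sigma$ that restricts to the identity on the dense open torus, so property (1) will come for free.

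First I would take the coarsest common refinement $\Sigma_0$ of $\Sigma$ and the normal fan $\Sigma_{\NP(f)}$ of the Newton polytope of $f$. Since $\NP(f)$ is a lattice polytope in $M_\mathbb{R}$, its support function $\Psi_f=\Psi_{\NP(f)}$ is piecewise linear with integral slopes on the cones of $\Sigma_{\NP(f)}$ (on each maximal cone $\sigma_v$ associated to a vertex $v$ of $\NP(f)$, $\Psi_f$ is the linear function $\langle v,\cdot\rangle$). Consequently $\Psi_f$ is already a virtual support function on $\Sigma_0$, and this property is preserved under any further refinement of $\Sigma_0$: linearity on a cone implies linearity on each of its subcones, and slopes remain in $M$.

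Next I would refine $\Sigma_0$ to a smooth projective fan $\Sigma'$. This is where I invoke the classical resolution of singularities for toric varieties together with a projective refinement, as in \cite[Theorem~11.1.9]{CLS} or the corresponding statements in Fulton's and Oda's books: every complete fan admits a refinement that is simultaneously smooth and regular (i.e., corresponds to a smooth projective toric variety). Concretely one can first take a projective refinement of $\Sigma_0$ using a sequence of stellar subdivisions that preserves the existence of a strictly convex virtual support function, and then desingularize by further stellar subdivisions along the rays generated by primitive interior lattice vectors of the non-smooth cones, keeping projectivity throughout. By the observation of the previous paragraph, $\Psi_f$ remains a virtual support function on the final fan $\Sigma'$, so condition (2) holds; condition (1) holds because $\Sigma'$ refines $\Sigma$.

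The only conceptual subtlety is the joint smoothness and projectivity: a naive toric resolution of a projective fan can in principle destroy projectivity, and conversely a naive projectivization can destroy smoothness. I would therefore cite a packaged version of the result (smooth projective refinement of a complete fan), rather than carry out the combinatorial argument by hand. Once that is available, the lemma reduces to the bookkeeping above.
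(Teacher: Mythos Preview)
Your proposal is correct and follows essentially the same approach as the paper: refine $\Sigma$ so that $\Psi_f$ becomes a virtual support function, then refine further to obtain a smooth projective fan, with the induced toric morphism automatically satisfying (1). The paper is terser---it simply cites the toric Chow lemma \cite[Theorem 6.1.18]{CLS} for projectivity and \cite[\S2.6]{Ful} for smoothness without flagging the compatibility issue you raise---but the substance is the same.
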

\begin{proof}
One can always refine the complete fan $\Sigma$ to a fan $\Sigma^\prime$ in such a way that $\Psi_f$ is a virtual support function on $\Sigma^\prime$. After possibly refining again, one can suppose that $\Sigma^\prime$ is the fan of a projective toric variety (because of the toric Chow lemma, see \cite[Theorem 6.1.18]{CLS}) and that each of its cones is generated by a part of a basis of $N$ (see \cite[\S2.6]{Ful}). The associated toric variety $X_{\Sigma^\prime}$ is smooth, projective and it satisfies (2). Finally, since $\Sigma^\prime$ is a refinement of $\Sigma$, the toric morphism $\pi$ given by \cite[Theorem 3.3.4]{CLS} is proper and restricts to the identity on the dense open orbit of $X_{\Sigma^\prime}$.
\end{proof}

The previous lemma, together with the fact that intersection theoretical properties are stable under birational transformations, allows to compute the degree of a cycle of codimension $1$ in a toric variety with respect to a family of toric divisors generated by global sections.

\begin{proposition}\label{degree of an hypersurface}
Let $D_{\Psi_1},\dots,D_{\Psi_{n-1}}$ be toric divisors on $X_\Sigma$ generated by global sections, $Z$ an effective cycle on $X_\Sigma$ of pure codimension $1$ and prime components intersecting $X_0$, with defining polynomial $f$. Then \[\deg_{D_{\Psi_1},\dots,D_{\Psi_{n-1}}}(Z)=\MV_M(\Delta_{\Psi_1},\dots,\Delta_{\Psi_{n-1}},\NP(f)),\]where $\MV_M$ denotes the mixed volume function associated to the measure $\vol_M$ (see \hyperref[lattice volume]{Remark \ref*{lattice volume}}) and $\Delta_{\Psi_i}$ the polytope associated to the toric divisor $D_{\Psi_i}$, for each $i=1,\dots,n-1$.
\end{proposition}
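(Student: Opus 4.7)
My plan is to reduce to the case of a smooth projective toric variety via the resolution provided by \hyperref[a suitable resolution exists]{Lemma \ref*{a suitable resolution exists}}, and then to use \hyperref[divisor of Laurent polynomial]{Theorem \ref*{divisor of Laurent polynomial}} to replace $Z$ by a rationally equivalent toric divisor, whose degree is computed by the classical combinatorial intersection formula for toric varieties.

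First, I would apply \hyperref[a suitable resolution exists]{Lemma \ref*{a suitable resolution exists}} to obtain a proper toric birational morphism $\pi\colon X_{\Sigma^\prime}\to X_\Sigma$, with $X_{\Sigma^\prime}$ smooth and projective, on which $\Psi_f$ is a virtual support function. Since $\pi$ restricts to the identity on the dense open orbit and the prime components of $Z$ all meet this orbit, the strict transform $\widetilde{Z}$ of $Z$ on $X_{\Sigma^\prime}$ is well-defined, still admits $f$ as a defining polynomial, and satisfies $\pi_*\widetilde{Z}=Z$. Each pull-back $\pi^*D_{\Psi_i}$ is again a toric divisor generated by global sections, with the same associated polytope $\Delta_{\Psi_i}$. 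The projection formula for degrees then reduces the statement to the computation of $\deg_{\pi^*D_{\Psi_1},\dots,\pi^*D_{\Psi_{n-1}}}(\widetilde{Z})$ on $X_{\Sigma^\prime}$.

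Next I would identify $\widetilde{Z}$ combinatorially in the Chow group. By \eqref{cycle of defining polynomial} and \hyperref[divisor of Laurent polynomial]{Theorem \ref*{divisor of Laurent polynomial}}, one has
\[\widetilde{Z}=\divisor(\tilde{f})-\sum_{\tau\in(\Sigma^\prime)^{(1)}}\Psi_f(v_\tau)V(\tau),\]
so $\widetilde{Z}$ is rationally equivalent to $-\sum_\tau\Psi_f(v_\tau)V(\tau)$. By \eqref{divisor of toric section}, the latter is the Weil divisor of the toric Cartier divisor $D_{\Psi_f}$ on $X_{\Sigma^\prime}$. Since $\Psi_f$ is the support function of the nonempty polytope $\NP(f)$, it is concave, so $D_{\Psi_f}$ is generated by global sections and its associated polytope is precisely $\NP(f)$.

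Finally, I would invoke the classical combinatorial formula expressing the intersection number of $n$ toric divisors generated by global sections on a smooth projective toric variety as the mixed volume of their associated polytopes (see e.g.\ \cite[\S5.4]{Ful}) to conclude
\[\deg_{\pi^*D_{\Psi_1},\dots,\pi^*D_{\Psi_{n-1}}}\big(\widetilde{Z}\big)=\MV_M(\Delta_{\Psi_1},\dots,\Delta_{\Psi_{n-1}},\NP(f)).\]
The main subtlety I foresee is the reduction step: one must carefully check that passing to the strict transform on the toric refinement $\Sigma^\prime$ does not alter the Newton polytope data, and that the projection formula legitimately transports the degree from $X_\Sigma$ to $X_{\Sigma^\prime}$. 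Once that bookkeeping is settled, the proof is a clean combination of \hyperref[divisor of Laurent polynomial]{Theorem \ref*{divisor of Laurent polynomial}} with the classical mixed-volume formula.
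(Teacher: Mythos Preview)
Your proposal is correct and follows essentially the same route as the paper: reduce via \hyperref[a suitable resolution exists]{Lemma \ref*{a suitable resolution exists}} to a smooth projective toric variety on which $\Psi_f$ is a virtual support function, use \hyperref[divisor of Laurent polynomial]{Theorem \ref*{divisor of Laurent polynomial}} together with \eqref{divisor of toric section} and \eqref{cycle of defining polynomial} to identify (the strict transform of) $Z$ with the toric divisor $D_{\Psi_f}$ up to rational equivalence, and then apply the classical mixed-volume formula for the top intersection of nef toric divisors. The paper packages the same computation slightly differently---it works with the rational section $\tilde{f}s_f$ of $\mathscr{O}(D_f)$ and invokes the projection formula and B\'ezout directly---but the underlying argument is the one you describe.
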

\begin{proof}
Consider the smooth projective toric variety $X_{\Sigma^\prime}$ and the proper toric morphism $\pi:X_{\Sigma^\prime}\to X_\Sigma$ given by \hyperref[a suitable resolution exists]{Lemma \ref*{a suitable resolution exists}}. Since the support function $\Psi_f$ is a virtual support function on $\Sigma^\prime$, one can consider the corresponding toric divisor $D_f$ on $X_{\Sigma^\prime}$ and the associated distinguished rational section $s_f$ of $\mathscr{O}(D_f)$. The product $\tilde{f}s_f$ is a rational section of $\mathscr{O}(D_f)$, with associated Weil divisor satisfying
\begin{equation*}
\pi_*\divisor\big(\tilde{f} s_f\big)=\pi_*(\divisor(\tilde{f})+\divisor(s_f))=Z
\end{equation*}
by \hyperref[divisor of Laurent polynomial]{Theorem \ref*{divisor of Laurent polynomial}}, \eqref{divisor of toric section}, \eqref{cycle of defining polynomial} and the definition of $\pi$. The projection formula in \cite[Proposition 2.3 (c)]{Ful2} and B\'ezout's theorem yield
\[\deg_{D_{\Psi_1},\dots,D_{\Psi_{n-1}}}(Z)=\deg_{\pi^*D_{\Psi_1},\dots,\pi^*D_{\Psi_n-1},D_f}(X_{\Sigma^\prime}).\]
The function $\Psi_f$ being concave, $D_f$ is generated by global sections. Moreover, the virtual support functions associated to the toric divisor $\pi^*D_{\Psi_i}$ on $X_{\Sigma^\prime}$ agrees with $\Psi_i$, for every $i=1,\dots,n-1$. The combinatorial description in \cite[Proposition 2.10]{O} of the degree of a toric variety with respect to toric divisor generated by global sections concludes then the proof.
\end{proof}

\begin{rem}\label{remark for degree of toric divisor}
By \cite[formula at page 55]{Ful}, the irreducible components of $X_\Sigma\setminus X_0$ are the orbit closures $V(\tau)$, with $\tau$ ranging in the set of $1$ dimensional cones of $\Sigma$. It follows that if $Z$ is a prime divisor of $X_\Sigma$ not intersecting $X_0$, it coincides with $V(\tau)$ for some $\tau\in\Sigma^{(1)}$. In such a case, the degree of $Z$ with respect to a collection $D_{\Psi_1},\dots,D_{\Psi_{n-1}}$ of toric divisors on $X_\Sigma$ generated by global sections is given by
\[\deg_{D_{\Psi_1},\dots,D_{\Psi_{n-1}}}(V(\tau))=\MV_{M(v_\tau)}\big(\Delta_{\Psi_1}^{v_\tau},\dots,\Delta_{\Psi_{n-1}}^{v_\tau}\big),\]
where $v_\tau$ is the minimal nonzero integral vector of $\tau$, see \cite[formul{\ae} (3.4.1) and (3.4.4)]{BPS}.
\end{rem}

\begin{rem}\label{can reduce to the case of a toric variety compatible with the polynomial}
The reduction to the case of a smooth projective toric variety employed in the proof of \hyperref[degree of an hypersurface]{Proposition \ref*{degree of an hypersurface}} equally works when computing the local height of the cycle $Z$ with respect to a family of $v$-adic semipositive toric metrized divisors $\overline{D}_{0,v},\dots,\overline{D}_{n-1,v}$. Indeed, let $f$ be a defining polynomial for $Z$, $X_{\Sigma^\prime}$ and $\pi$ as in the statement of \hyperref[a suitable resolution exists]{Lemma \ref*{a suitable resolution exists}}. For every family of rational sections $s_0,\dots,s_{n-1}$ of $\mathscr{O}(D_0),\dots,\mathscr{O}(D_{n-1})$ respectively for which the following local heights are well-defined, the local arithmetic projection formula in \cite[Theorem 1.4.17 (2)]{BPS} asserts that\[h_{\overline{D}_{0,v},\dots,\overline{D}_{n-1,v}}(Z;s_0,\dots,s_{n-1})=h_{\pi^*\overline{D}_{0,v},\dots,\pi^*\overline{D}_{n-1,v}}(Z^\prime;\pi^*s_0,\dots,\pi^*s_{n-1}),\] where $Z^\prime$ is the cycle in $X_{\Sigma^\prime}$ associated to the subscheme obtained as the closure of $V(f)$ and has hence $f$ as a defining polynomial. Because of \cite[Proposition 4.3.19]{BPS}, the pull-back of $\overline{D}_{i,v}$ via $\pi$ is a $v$-adic semipositive toric metrized divisor on $X_{\Sigma^\prime}$ whose metric function coincides with the one of $\overline{D}_{i,v}$, for every $i=0,\dots,n-1$ and $v\in\mathfrak{M}$. It follows that any combinatorial formula for the local height of $Z^\prime$ in $X_{\Sigma^\prime}$ with respect to $\pi^*\overline{D}_{0,v},\dots,\pi^*\overline{D}_{n-1,v}$ only involving the defining polynomial of $Z$ and the metric functions of the metrized divisors equally holds for the local height of $Z$ in $X_\Sigma$ with respect to $\overline{D}_{0,v},\dots,\overline{D}_{n-1,v}$. Similarly, the reduction step can be adopted when dealing with the integrability and the global height of $Z$, because of \hyperref[heights and proper morphisms]{Proposition \ref*{heights and proper morphisms}}.
\end{rem}

\subsection{Local heights}\label{section about local heights}

Let $f$ be a defining polynomial for the cycle $Z$ and, as in the previous subsection, denote by $\Psi_f$ the support function of its Newton polytope. Under the assumption that $\Psi_f$ is a virtual support function on the fan of $X_\Sigma$, it determines a toric divisor $D_f$ on $X_\Sigma$ together with a distinguished rational section $s_f$ of $\mathscr{O}(D_f)$, as in \hyperref[section height on toric varieties]{subsection \ref*{section height on toric varieties}}.

\begin{defn}\label{v-adic Ronkin metric}
In the notation above, and for a place $v\in\mathfrak{M}$, the \emph{$v$-adic Ronkin metric} on $D_f$ is the $v$-adic semipositive toric metric on $\mathscr{O}(D_f)_v^{\an}$ corresponding to the $v$-adic Ronkin function $\rho_{f,v}$ via \hyperref[characterization of toric metrics]{Theorem \ref*{characterization of toric metrics}}.
\end{defn}

The previous definition makes sense since, for every $v\in\mathfrak{M}$, the $v$-adic Ronkin function of $f$ is concave on $N_\mathbb{R}$ and has bounded difference from $\Psi_f$ because of \hyperref[properties of Ronkin functions]{Proposition \ref*{properties of Ronkin functions}}. If not otherwise specified, $\overline{D}_{f,v}$ will denote the divisor $D_f$ equipped with the $v$-adic Ronkin metric $\|\cdot\|_{f,v}$ defined above. By definition,
\begin{equation}\label{Ronkin metric definition}
\log\|s_f\|_{f,v}=\rho_{f,v}\circ\trop_v
\end{equation}
on $X_{0,v}^{\an}$. To lighten the notation, we will drop the subscript $v$ whenever the choice of the place is clear from the context.

\begin{proposition}
Let $f$ and $g$ be two nonzero Laurent polynomials and assume that $\Psi_f$ and $\Psi_g$ are virtual support functions on the fan of $X_\Sigma$. Then, $\overline{D}_f+\overline{D}_g=\overline{D}_{f\cdot g}$.
\end{proposition}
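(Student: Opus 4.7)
The plan is to verify the equality of the two metrized divisors by checking separately the equality of the underlying toric divisors and of the metrics.

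First I would check that $D_f + D_g = D_{f\cdot g}$ as toric divisors on $X_\Sigma$. Since virtual support functions are in bijection with toric Cartier divisors, this reduces to the equality $\Psi_f + \Psi_g = \Psi_{f\cdot g}$. This follows from the classical fact that $\NP(f\cdot g) = \NP(f) + \NP(g)$ (Minkowski sum of Newton polytopes) combined with the additivity of support functions under Minkowski sums, namely $\Psi_{P+Q} = \Psi_P + \Psi_Q$ for any pair of polytopes $P,Q$ in $M_\mathbb{R}$. In particular, $\Psi_{f\cdot g}$ is a virtual support function on the fan of $X_\Sigma$ as soon as $\Psi_f$ and $\Psi_g$ are, so that $\overline{D}_{f\cdot g}$ is well-defined. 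Under the identification $D_f + D_g = D_{f\cdot g}$, the distinguished rational sections multiply as $s_f\cdot s_g = s_{f\cdot g}$, since both sides are rational sections of the same line bundle whose Weil divisor equals $D_{f\cdot g}$.

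Next I would verify the equality of the metrics at each place $v\in\mathfrak{M}$. By \hyperref[characterization of toric metrics]{Theorem \ref*{characterization of toric metrics}}, a $v$-adic semipositive toric metric on a toric divisor is uniquely determined by its metric function on $N_\mathbb{R}$. From the defining equation \eqref{definition of the metric function} and the multiplicativity of norms applied to $s_{f\cdot g} = s_f \cdot s_g$, the metric function of the sum $\overline{D}_{f,v} + \overline{D}_{g,v}$ is the pointwise sum $\psi_{\overline{D}_{f,v}} + \psi_{\overline{D}_{g,v}}$. By \hyperref[v-adic Ronkin metric]{Definition \ref*{v-adic Ronkin metric}}, these metric functions are respectively $\rho_{f,v}$ and $\rho_{g,v}$, whose sum equals $\rho_{f\cdot g, v}$ by \hyperref[Ronkin function of a product]{Proposition \ref*{Ronkin function of a product}}. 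The latter is precisely the metric function of $\overline{D}_{f\cdot g,v}$, which concludes the proof.

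The argument is essentially a direct assembly of the additivity property of Ronkin functions and the injectivity of the correspondence between semipositive toric metrics and their metric functions, so there is no significant obstacle. The only point requiring a small amount of care is the compatibility between the distinguished sections of $D_f$, $D_g$ and $D_{f\cdot g}$, which is straightforward once one recalls the normalization $\divisor(s_\Psi) = D_\Psi$.
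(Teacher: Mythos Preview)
Your proof is correct and follows essentially the same approach as the paper's: both first deduce $D_f+D_g=D_{f\cdot g}$ from $\NP(f\cdot g)=\NP(f)+\NP(g)$, and then match the metrics place by place via the additivity of Ronkin functions (\hyperref[Ronkin function of a product]{Proposition \ref*{Ronkin function of a product}}). The only difference is cosmetic: the paper invokes \cite[Proposition 4.3.14 (1)]{BPS} for the fact that the metric function of a sum of toric metrized divisors is the sum of the metric functions, whereas you unpack this directly from the multiplicativity of the tensor metric and the compatibility $s_f\cdot s_g=s_{f\cdot g}$.
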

\begin{proof}
The equality $\NP(f\cdot g)=\NP(f)+\NP(g)$ implies that $\Psi_{f\cdot g}=\Psi_f+\Psi_g$. In particular, $\Psi_{f\cdot g}$ is a virtual support function on the fan $\Sigma$ and then defines a toric divisor $D_{f\cdot g}$ on $X_\Sigma$ which satisfies $D_{f\cdot g}=D_f+D_g$ because of \cite[\S3.4]{Ful}. The statement follows now from \cite[Proposition 4.3.14 (1)]{BPS} and \hyperref[Ronkin function of a product]{Proposition \ref*{Ronkin function of a product}}.
\end{proof}

The key property of the Ronkin metric is given in the following proposition.

\begin{proposition}\label{local height of hypersurfaces}
Let $X_\Sigma$ be a smooth projective toric variety, $Z$ an effective cycle on $X_\Sigma$ of pure codimension $1$ and prime components intersecting $X_0$. Let $f$ be a defining polynomial for $Z$ and $\tilde{f}$ the associated rational function on $X_\Sigma$. Assume moreover that $\Psi_f$ is a virtual support function on the fan $\Sigma$. For a fixed place $v$ of $K$, let $\overline{D}_0,\dots,\overline{D}_{n-1}$ be toric divisors on $X_\Sigma$, equipped with $v$-adic semipositive toric metrics. Then
\begin{equation}\label{local height of hypersurfaces equation}
h_{\overline{D}_0,\dots,\overline{D}_{n-1}}(Z;s_0,\dots,s_{n-1})=h_{\overline{D}_0,\dots,\overline{D}_{n-1},\overline{D}_f}\big(X_\Sigma;s_0,\dots,s_{n-1},\tilde{f}s_f\big),
\end{equation}
for every choice of rational sections $s_0,\dots,s_{n-1}$ of $\mathscr{O}(D_0),\dots,\mathscr{O}(D_{n-1})$ respectively with $\divisor(s_0),\dots,\divisor(s_{n-1}),Z$ intersecting properly. 
\end{proposition}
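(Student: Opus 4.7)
The plan is to unfold the right-hand side of \eqref{local height of hypersurfaces equation} via the recursive definition of the local height (Definition \ref{local height definition}) with respect to the last entry $(\overline{D}_f,\tilde{f}s_f)$, and to verify that the two terms so produced reduce exactly to the left-hand side. Writing
\begin{multline*}
h_{\overline{D}_0,\dots,\overline{D}_{n-1},\overline{D}_f}\bigl(X_\Sigma;s_0,\dots,s_{n-1},\tilde{f}s_f\bigr)
= h_{\overline{D}_0,\dots,\overline{D}_{n-1}}\bigl(X_\Sigma\cdot\divisor(\tilde{f}s_f);s_0,\dots,s_{n-1}\bigr) \\
- \int_{X_v^{\an}} \log\bigl\|\tilde{f}s_f\bigr\|_f\, c_1(\overline{D}_{0,v})\wedge\dots\wedge c_1(\overline{D}_{n-1,v})\wedge\delta_{X_\Sigma},
\end{multline*}
the argument will split into a combinatorial identification of the cycle $\divisor(\tilde{f}s_f)$ and an analytical computation showing that the residual integral vanishes.

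First I would identify the divisor of $\tilde{f}s_f$. By \hyperref[divisor of Laurent polynomial]{Theorem \ref*{divisor of Laurent polynomial}}, $\divisor(\tilde{f})=\bigl[\overline{V(f)}\bigr]+\sum_{\tau\in\Sigma^{(1)}}\Psi_f(v_\tau)V(\tau)$, while by \eqref{divisor of toric section} the Weil divisor $[D_f]=\divisor(s_f)$ equals $-\sum_{\tau\in\Sigma^{(1)}}\Psi_f(v_\tau)V(\tau)$. Using the equality $\bigl[\overline{V(f)}\bigr]=Z$ from \eqref{cycle of defining polynomial}, the contributions of the orbit closures cancel and $\divisor(\tilde{f}s_f)=Z$. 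Hence the first summand above becomes $h_{\overline{D}_0,\dots,\overline{D}_{n-1}}(Z;s_0,\dots,s_{n-1})$, which is precisely the left-hand side of \eqref{local height of hypersurfaces equation}. It thus suffices to prove that the remaining integral vanishes.

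The core of the proof is therefore the computation of
\[
I:=\int_{X_v^{\an}}\log\bigl\|\tilde{f}s_f\bigr\|_f\, c_1(\overline{D}_{0,v})\wedge\dots\wedge c_1(\overline{D}_{n-1,v})\wedge\delta_{X_\Sigma}.
\]
By \hyperref[Chambert-Loir measure in the toric case]{Theorem \ref*{Chambert-Loir measure in the toric case}}, the measure against which we integrate is supported on $X_{0,v}^{\an}$ and, through the embedding $\iota_v:N_\mathbb{R}\times\mathcal{B}_{\mathbb{C}_v}\hookrightarrow X_{0,v}^{\an}$, is the pushforward of the product measure $\MM_M(\psi_{0,v},\dots,\psi_{n-1,v})\times\Haar_{\mathcal{B}_{\mathbb{C}_v}}$. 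On $X_{0,v}^{\an}$, the rational function $\tilde{f}$ restricts to the regular function $f$, so that $\log|\tilde{f}(x)|_v=\log\|f\|_x$ for every seminorm $x$; meanwhile, by \eqref{definition of the metric function} and \hyperref[v-adic Ronkin metric]{Definition \ref*{v-adic Ronkin metric}}, $\log\|s_f\|_f=\rho_{f,v}\circ\trop_v$. Writing $\log\|\tilde{f}s_f\|_f=\log\|f\|_x+\rho_{f,v}(\trop_v(x))$ and applying Fubini on $N_\mathbb{R}\times\mathcal{B}_{\mathbb{C}_v}$ yields
\[
I=\int_{N_\mathbb{R}}\Bigl(\int_{\mathcal{B}_{\mathbb{C}_v}}\log\|f\|_{\iota_v(u,\theta)}\,d\Haar(\theta)+\rho_{f,v}(u)\Bigr)\,d\MM_M(\psi_{0,v},\dots,\psi_{n-1,v})(u).
\]
The inner integral equals $-\rho_{f,v}(u)$ by the very definition of the $v$-adic Ronkin function, since the image of $\iota_v(u,\cdot)$ is exactly the minimal boundary $\mathcal{B}(u)$ on which $\sigma_u$ is supported. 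Thus the integrand vanishes identically and $I=0$, which completes the proof.

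The main obstacle I anticipate is the bookkeeping around the integrability of $\log\|f\|_x$ near the amoeba of $f$ in the archimedean case, and the verification that the measure decomposition of \hyperref[Chambert-Loir measure in the toric case]{Theorem \ref*{Chambert-Loir measure in the toric case}} can be applied to the (at worst logarithmically singular) Green function $\log\|\tilde{f}s_f\|_f$ rather than only to continuous integrands; both issues are handled by the general well-definedness results for local heights cited after \hyperref[local height definition]{Definition \ref*{local height definition}}, together with the fact that proper intersection is preserved because $\divisor(\tilde{f}s_f)=Z$ already meets $\divisor(s_0),\dots,\divisor(s_{n-1})$ properly by hypothesis.
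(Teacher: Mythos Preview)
Your proposal is correct and follows essentially the same approach as the paper: both identify $\divisor(\tilde{f}s_f)=Z$ via Theorem~\ref{divisor of Laurent polynomial}, \eqref{divisor of toric section} and \eqref{cycle of defining polynomial}, then apply the recursive definition of local height and show the residual integral vanishes by restricting to $X_{0,v}^{\an}$, splitting $\log\|\tilde{f}s_f\|_f=\log|f|+\rho_{f,v}\circ\trop_v$, and using Theorem~\ref{Chambert-Loir measure in the toric case} together with Fubini so that the inner fiber integral reproduces $-\rho_{f,v}$. Your closing remark on the integrability of $\log|f|$ near the amoeba is a reasonable caveat; the paper handles this implicitly by invoking the well-definedness results for local heights cited after Definition~\ref{local height definition}.
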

\begin{proof}
The product $\tilde{f}s_f$ is a rational section of $\mathscr{O}(D_f)$ on $X_\Sigma$ with associated Weil divisor
\begin{equation*}
\divisor\big(\tilde{f} s_f\big)=\divisor(\tilde{f})+\divisor(s_f)=Z
\end{equation*}
by \hyperref[divisor of Laurent polynomial]{Theorem \ref*{divisor of Laurent polynomial}}, \eqref{divisor of toric section} and \eqref{cycle of defining polynomial}. Hence, the sections $s_0,\dots,s_{n-1},\tilde{f}s_f$ meet $X_\Sigma$ properly and the right hand side term in \eqref{local height of hypersurfaces equation} is well defined.
\\\hyperref[local height definition]{Definition \ref*{local height definition}} stating that
\begin{multline*}
h_{\overline{D}_0,\dots,\overline{D}_{n-1}}(Z;s_0,\dots,s_{n-1})=h_{\overline{D}_0,\dots,\overline{D}_{n-1},\overline{D}_f}\big(X_\Sigma;s_0,\dots,s_{n-1},\tilde{f}s_f\big)\\+\int_{X_\Sigma^{\an}}\log\|\tilde{f}s_f\|_f\ c_1(\overline{D}_0)\wedge\dots\wedge c_1(\overline{D}_{n-1}),
\end{multline*}
the proposition follows from the vanishing of the integral on the right hand side. Indeed, thanks to \hyperref[Chambert-Loir measure in the toric case]{Theorem \ref*{Chambert-Loir measure in the toric case}}, such an integral is supported on the analytification of the dense open orbit of $X_\Sigma$, where the rational function $\tilde{f}$ coincides with the regular function $f$. Together with the definition of the Ronkin metric in \eqref{Ronkin metric definition}, this yields
\begin{multline*}
\int_{X_\Sigma^{\an}}\log\|\tilde{f}s_f\|_f\ c_1(\overline{D}_0)\wedge\dots\wedge c_1(\overline{D}_{n-1})=\int_{X_0^{\an}}\log|f|\ c_1(\overline{D}_0)\wedge\dots\wedge c_1(\overline{D}_{n-1})\\+\int_{X_0^{\an}}(\rho_{f,v}\circ\trop_v)\ c_1(\overline{D}_0)\wedge\dots\wedge c_1(\overline{D}_{n-1}).
\end{multline*}
For every $i=0,\dots,n-1$, denote by $\psi_i$ the metric function of $\overline{D}_i$. The tropicalization map being continuous, the change of variables formula and \hyperref[Chambert-Loir measure in the toric case]{Theorem \ref*{Chambert-Loir measure in the toric case}} imply on the one hand that
\begin{equation*}
\int_{X_0^{\an}}(\rho_{f,v}\circ\trop_v)\ c_1(\overline{D}_0)\wedge\dots\wedge c_1(\overline{D}_{n-1})=\int_{N_\mathbb{R}}\rho_{f,v}\ d\MM_M(\psi_0,\dots,\psi_{n-1}).
\end{equation*}
On the other hand, \hyperref[Chambert-Loir measure in the toric case]{Theorem \ref*{Chambert-Loir measure in the toric case}}, together with the change of variables formula and Fubini's theorem, gives
\begin{multline*}
\int_{X_0^{\an}}\log|f|\ c_1(\overline{D}_0)\wedge\dots\wedge c_1(\overline{D}_{n-1})=\\\int_{N_\mathbb{R}}\bigg(\int_{\mathcal{B}_{\mathbb{C}_v}}(\log|f|\circ\iota_v)\ d\Haar_{\mathcal{B}_{\mathbb{C}_v}}\bigg)\ d\MM_M(\psi_0,\dots,\psi_{n-1}).
\end{multline*}
The definition of the maps $\iota_v$ and $\rho_{f,v}$ assures that the inner integral coincides with the opposite of the $v$-adic Ronkin function of $f$, concluding the proof.
\end{proof}

\subsection{Toric local heights}

Recall from \hyperref[canonical metric definition]{Definition \ref*{canonical metric definition}} that any toric divisor generated by global sections admits a distinguished $v$-adic semipositive toric metric, the canonical metric. This allows to define a local height with respect to toric divisors that is independent of the choice of the sections. Such a notion was introduced in \cite[\S5.1]{BPS} as a key step in the proof of the formula for the global height of a toric variety. 

\begin{defn}
For a place $v$ of $K$, let $\overline{D}_0,\dots,\overline{D}_d$ be toric divisors on $X_\Sigma$, endowed with $v$-adic semipositive toric metrics. Denote by $\overline{D}_0^{\can},\dots,\overline{D}_d^{\can}$ the same divisors equipped with their $v$-adic canonical metric. Let $Y$ be an irreducible $d$-dimensional subvariety of $X_\Sigma$ and $\varphi:Y^\prime\to Y$ a birational morphism, with $Y^\prime$ projective. The \emph{$v$-adic toric local height} of $Y$ with respect to $\overline{D}_0,\dots,\overline{D}_{d}$ is defined as
\[h^{\tor}_{\overline{D}_0,\dots,\overline{D}_d}(Y):=h_{\varphi^*\overline{D}_0,\dots,\varphi^*\overline{D}_d}(Y^\prime;s_0,\dots,s_d)-h_{\varphi^*\overline{D}_0^{\can},\dots,\varphi^*\overline{D}_d^{\can}}(Y^\prime;s_0,\dots,s_d),\]
where $s_i$ is a rational section of $\varphi^*\mathscr{O}(D_i)$, for every $i=0,\dots,d$ and $s_0,\dots,s_d$ meet $Y^\prime$ properly. The definition extends by linearity to any cycle of dimension $d$.
\end{defn}

The toric local height of a cycle does neither depend on the choice of the sections $s_0,\dots,s_d$, nor on the birational model $Y^\prime$ of $Y$ because of \cite[Theorem 1.4.17 (2) and (3)]{BPS}. Moreover, the definition is nonempty: Chow's lemma provides $Y$ with a projective birational model, while the moving lemma assures the existence of rational sections meeting $Y^\prime$ properly.
\\We prove here a formula for the toric local height of an effective cycle $Z$ on $X_\Sigma$ of pure codimension $1$ and prime components intersecting $X_0$.

\begin{theorem}\label{toric local height of hypersurfaces}
Let $X_\Sigma$ be a proper toric variety, $Z$ an effective cycle on $X_\Sigma$ of pure codimension $1$ and prime components intersecting $X_0$. For a place $v$ of $K$, let $\overline{D}_0,\dots,\overline{D}_{n-1}$ be toric divisors on $X_\Sigma$, equipped with $v$-adic semipositive toric metrics. Then
\[h^{\tor}_{\overline{D}_0,\dots,\overline{D}_{n-1}}(Z)=\MI_M\big(\vartheta_0,\dots,\vartheta_{n-1},\rho_{f,v}^\vee\big)+\deg_{D_0,\dots,D_{n-1}}(X_\Sigma)\cdot\rho_{f,v}(0),\]
where $f$ is a defining polynomial for $Z$ and $\vartheta_i$ is the roof function of $\overline{D}_i$, for $i=0,\dots,n-1$.
\end{theorem}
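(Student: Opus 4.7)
The plan is, after a suitable geometric reduction, to invoke \hyperref[local height of hypersurfaces]{Proposition \ref*{local height of hypersurfaces}} to convert the local height of $Z$ into a local height of $X_\Sigma$ with one extra divisor, and then to reorganise the resulting expression as a difference of toric local heights of $X_\Sigma$ which can be evaluated through the mixed integral formula for toric local heights of toric varieties.

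First I would invoke \hyperref[a suitable resolution exists]{Lemma \ref*{a suitable resolution exists}} to produce a smooth projective toric variety $X_{\Sigma^\prime}$ with a proper toric morphism $\pi\colon X_{\Sigma^\prime}\to X_\Sigma$ restricting to the identity on the dense open orbit and on whose fan $\Psi_f$ is a virtual support function. By \hyperref[can reduce to the case of a toric variety compatible with the polynomial]{Remark \ref*{can reduce to the case of a toric variety compatible with the polynomial}} pulling back preserves the metric functions, the defining polynomial $f$ and hence the Ronkin function $\rho_{f,v}$, so one may assume from the start that $X_\Sigma$ is smooth projective and that $\Psi_f$ is a virtual support function on $\Sigma$.

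Write $\overline{D}_f$ for the toric divisor $D_f$ endowed with the $v$-adic Ronkin metric of \hyperref[v-adic Ronkin metric]{Definition \ref*{v-adic Ronkin metric}} and $\overline{D}_i^{\can}$ for $D_i$ endowed with its canonical metric. By the moving lemma one can fix rational sections $s_0,\dots,s_{n-1}$ of $\mathscr{O}(D_0),\dots,\mathscr{O}(D_{n-1})$ meeting $Z$ properly. Applying \hyperref[local height of hypersurfaces]{Proposition \ref*{local height of hypersurfaces}} successively with the original metrics and with the canonical metrics and subtracting yields
\[h^{\tor}_{\overline{D}_0,\dots,\overline{D}_{n-1}}(Z)=h_{\overline{D}_0,\dots,\overline{D}_{n-1},\overline{D}_f}\big(X_\Sigma;s_0,\dots,s_{n-1},\tilde{f}s_f\big)-h_{\overline{D}_0^{\can},\dots,\overline{D}_{n-1}^{\can},\overline{D}_f}\big(X_\Sigma;s_0,\dots,s_{n-1},\tilde{f}s_f\big).\]
Adding and subtracting inside this difference the local height computed with the canonical metric $\overline{D}_f^{\can}$ on $D_f$ (and canonical metrics on the $D_i$) rewrites the right hand side as
\[h^{\tor}_{\overline{D}_0,\dots,\overline{D}_{n-1},\overline{D}_f}(X_\Sigma)-h^{\tor}_{\overline{D}_0^{\can},\dots,\overline{D}_{n-1}^{\can},\overline{D}_f}(X_\Sigma).\]

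To conclude I would apply the local counterpart of \hyperref[height of a toric variety]{Theorem \ref*{height of a toric variety}} (i.e.\ \cite[Proposition 5.1.6]{BPS}), identifying each toric local height of $X_\Sigma$ with the mixed integral of the roof functions of the entries. The roof function of $\overline{D}_f$ equals $\rho_{f,v}^\vee$ by \hyperref[v-adic Ronkin metric]{Definition \ref*{v-adic Ronkin metric}}, while the roof function of $\overline{D}_i^{\can}$ is the indicator $\iota_{\Delta_i}$ of the polytope $\Delta_i$ by \hyperref[indicator and support function]{Example \ref*{indicator and support function}}. Hence the first toric local height equals $\MI_M(\vartheta_0,\dots,\vartheta_{n-1},\rho_{f,v}^\vee)$, whereas the second equals $\MI_M(\iota_{\Delta_0},\dots,\iota_{\Delta_{n-1}},\rho_{f,v}^\vee)$, which \hyperref[mixed integral with indicator functions]{Corollary \ref*{mixed integral with indicator functions}} turns into $-\MV_M(\Delta_0,\dots,\Delta_{n-1})\,\rho_{f,v}(0)=-\deg_{D_0,\dots,D_{n-1}}(X_\Sigma)\,\rho_{f,v}(0)$ via \cite[Proposition 2.10]{O}. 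Subtracting gives the announced equality.

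The main obstacle lies in the bookkeeping of the second step: on each of the $n+1$ divisors (including $D_f$) one has to track three possible choices of metric (the given one, the canonical one, and the Ronkin one), making sure that the two applications of \hyperref[local height of hypersurfaces]{Proposition \ref*{local height of hypersurfaces}} together with the multilinearity of local heights combine cleanly into a difference of toric local heights of $X_\Sigma$ and not into some spurious mixed expression.
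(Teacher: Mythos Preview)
Your proposal is correct and follows essentially the same route as the paper's own proof: reduce via \hyperref[can reduce to the case of a toric variety compatible with the polynomial]{Remark \ref*{can reduce to the case of a toric variety compatible with the polynomial}}, apply \hyperref[local height of hypersurfaces]{Proposition \ref*{local height of hypersurfaces}} with both the given and the canonical metrics on the $D_i$, add and subtract the term with $\overline{D}_f^{\can}$ to express the result as a difference of two toric local heights of $X_\Sigma$, and then evaluate these via the mixed-integral formula together with \hyperref[mixed integral with indicator functions]{Corollary \ref*{mixed integral with indicator functions}} and the degree formula. The only cosmetic discrepancy is that the paper cites \cite[Corollary 5.1.9]{BPS} (the multilinear version) rather than \cite[Proposition 5.1.6]{BPS} for the toric local height of $X_\Sigma$; you should use the former, since the divisors in play are distinct.
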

\begin{proof}
Because of \hyperref[can reduce to the case of a toric variety compatible with the polynomial]{Remark \ref*{can reduce to the case of a toric variety compatible with the polynomial}}, one can assume that $X_\Sigma$ is a smooth projective toric variety on whose fan $\Psi_f$ is a virtual support function. Thanks to the moving lemma, one can choose rational sections $s_0,\dots,s_{n-1}$ of $\mathscr{O}(D_0),\dots,\mathscr{O}(D_{n-1})$ respectively such that $\divisor(s_0),\dots,\divisor(s_{n-1}),Z$ intersect properly. \hyperref[local height of hypersurfaces]{Proposition \ref*{local height of hypersurfaces}} implies then that
\begin{multline*}
h^{\tor}_{\overline{D}_0,\dots,\overline{D}_{n-1}}(Z)=h_{\overline{D}_0,\dots,\overline{D}_{n-1},\overline{D}_f}\big(X_\Sigma;s_0,\dots,s_{n-1},\tilde{f}s_f\big)\\-h_{\overline{D}_0^{\can},\dots,\overline{D}_{n-1}^{\can},\overline{D}_f}\big(X_\Sigma;s_0,\dots,s_{n-1},\tilde{f}s_f\big).
\end{multline*}
By adding and subtracting the quantity
\[h_{\overline{D}_0^{\can},\dots,\overline{D}_{n-1}^{\can},\overline{D}_f^{\can}}\big(X_\Sigma;s_0,\dots,s_{n-1},\tilde{f}s_f\big)\]
on the right hand side, one obtains that
\[h^{\tor}_{\overline{D}_0,\dots,\overline{D}_{n-1}}(Z)=h^{\tor}_{\overline{D}_0,\dots,\overline{D}_{n-1},\overline{D}_f}(X_\Sigma)-h^{\tor}_{\overline{D}_0^{\can},\dots,\overline{D}_{n-1}^{\can},\overline{D}_f}(X_\Sigma).\]
Denote by $\Psi_i$ the virtual support function on $\Sigma$ associated to the toric divisor $D_i$, for every $i=0,\dots,n-1$. Thanks to \cite[Corollary 5.1.9]{BPS}, the previous equality yields
\[h^{\tor}_{\overline{D}_0,\dots,\overline{D}_{n-1}}(Z)=\MI_M\big(\vartheta_0,\dots,\vartheta_{n-1},\rho_{f,v}^\vee\big)-\MI_M\big(\Psi_0^\vee,\dots,\Psi_{n-1}^\vee,\rho_{f,v}^\vee\big).\]
Since they admit by hypothesis a semipositive toric metric, the toric divisors $D_0,\dots,\allowbreak D_{n-1}$ are generated by global sections. For every $i=0,\dots,n-1$, the function $\Psi_i$ is hence concave and conic and so it is the support function of the polytope $\Delta_i:=\stab(\Psi_i)\subseteq M_\mathbb{R}$. The statement of the theorem follows from a combination of \hyperref[indicator and support function]{Example \ref*{indicator and support function}}, \hyperref[mixed integral with indicator functions]{Corollary \ref*{mixed integral with indicator functions}} and the combinatorial expression for the degree of a toric variety with respect to toric divisors generated by their global sections, see for example \cite[Proposition 2.10]{O}.
\end{proof}

\subsection{Global heights}

To state the result concerning the global case, recall that for a defining polynomial $f$ for $Z$, the support function of its Newton polytope is denoted by $\Psi_f$; whenever it is linear on each cone of a complete fan $\Sigma$, it defines a toric divisor $D_f$ on the toric variety $X_\Sigma$, together with a distinguished rational section $s_f$ of $\mathscr{O}(D_f)$.

\begin{defn}\label{definition of Ronkin metric}
In the above assumptions, the \emph{Ronkin metric} on $D_f$ is the choice, for every place $v\in\mathfrak{M}$, of the $v$-adic Ronkin metric on $D_f$ defined in \hyperref[v-adic Ronkin metric]{Definition \ref*{v-adic Ronkin metric}}.
\end{defn}

Unless otherwise stated, $\overline{D}_f$ will denote the toric divisor $D_f$ equipped with its Ronkin metric. By definition, it is a semipositive toric metrized divisor.

\begin{lemma}\label{Ronkin metric is adelic}
The Ronkin metric on $D_f$ is adelic.
\end{lemma}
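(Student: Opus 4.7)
The plan is to invoke the characterization of the $v$-adic canonical metric from Definition \ref*{canonical metric definition}: under the bijection of Theorem \ref*{characterization of toric metrics}, the $v$-adic canonical metric on $D_f$ corresponds to the virtual support function $\Psi_f$ itself. Since the Ronkin metric corresponds by construction to $\rho_{f,v}$, it suffices to show that $\rho_{f,v}=\Psi_f$ for all but finitely many $v\in\mathfrak{M}$.

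First I would dispose of archimedean places: by Lemma \ref*{adelic fields have finitely many archimedean places}, $\mathfrak{M}_\infty$ is finite, so it is enough to treat non-archimedean $v$. For such $v$, Remark \ref*{remark about known version of Ronkin functions} identifies the Ronkin function with the tropicalization of $f$, namely if $f=\sum_{m}c_m\chi^m$ then
\[
\rho_{f,v}(u)=\min_{m:\,c_m\neq0}\bigl(\langle m,u\rangle-\log|c_m|_v\bigr)
\]
for every $u\in N_\mathbb{R}$. On the other hand $\Psi_f(u)=\min_{m:\,c_m\neq0}\langle m,u\rangle$, so the two functions coincide as soon as $|c_m|_v=1$ for every $m$ in the (finite) support of $f$.

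Now I would use the defining property (2) in Definition \ref*{adelic family}: for each nonzero $c_m\in K^*$, the set of places $v$ with $|c_m|_v\neq 1$ is finite. Taking the union over the finitely many $m$ with $c_m\neq0$ yields a finite exceptional set $S\subset\mathfrak{M}$ outside of which $|c_m|_v=1$ for every coefficient of $f$. For $v\notin S\cup\mathfrak{M}_\infty$, the identity $\rho_{f,v}=\Psi_f$ then holds, so the $v$-adic Ronkin metric on $D_f$ is the $v$-adic canonical metric by the uniqueness part of Theorem \ref*{characterization of toric metrics}. This verifies the condition of Definition \ref*{definition of adelic toric metric} and concludes the proof.

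The argument is essentially bookkeeping; the only conceptual point is the reduction to the non-archimedean case (via Lemma \ref*{adelic fields have finitely many archimedean places}) and the observation that in the non-archimedean setting the Ronkin function is literally the tropical polynomial, so the adelic hypothesis on $K$ directly forces coincidence with $\Psi_f$ outside a finite set of places. I do not anticipate any serious obstacle.
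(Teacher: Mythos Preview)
Your proof is correct and follows essentially the same route as the paper: use Remark~\ref*{remark about known version of Ronkin functions} to identify $\rho_{f,v}$ with the tropicalization at non-archimedean places, invoke axiom~(2) of Definition~\ref*{adelic family} on the finitely many coefficients of $f$ to get $\rho_{f,v}=\Psi_f$ for almost all such $v$, and dispose of the archimedean places via Lemma~\ref*{adelic fields have finitely many archimedean places}. The paper's version is just more terse.
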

\begin{proof}
For a non-archimedean place $v\in\mathfrak{M}$, the function $\rho_{f,v}$ coincides with the tropicalization of the Laurent polynomial $f$, as claimed in \hyperref[remark about known version of Ronkin functions]{Remark \ref*{remark about known version of Ronkin functions}}. The fact that $f$ has finitely many nonzero coefficients and the second axiom in \hyperref[adelic family]{Definition \ref*{adelic family}} imply that $\rho_{f,v}=\Psi_f$ for all but finitely many non-archimedean places. The statement follows then from \hyperref[adelic fields have finitely many archimedean places]{Lemma \ref*{adelic fields have finitely many archimedean places}}.
\end{proof}

The definition of such a toric metrized divisor and the study of the local height of $Z$ in \hyperref[section about local heights]{section \ref*{section about local heights}} allow to answer the question of the integrability of $Z$ and to give a formula for its global height, implying \hyperref[main theorem introduction]{Theorem \ref*{main theorem introduction}} in the introduction.

\begin{theorem}\label{hypersurfaces are integrable and their global height}
Let $X_\Sigma$ be a proper toric variety, $Z$ an effective cycle on $X_\Sigma$ of pure codimension $1$ and prime components intersecting $X_0$. Let $\overline{D}_0,\dots,\overline{D}_{n-1}$ be toric divisors on $X_\Sigma$, equipped with adelic semipositive toric metrics. Then, $Z$ is integrable with respect to $\overline{D}_0,\dots,\overline{D}_{n-1}$ and its global height is given by
\[h_{\overline{D}_0,\dots,\overline{D}_{n-1}}(Z)=\sum_{v\in\mathfrak{M}}n_v\MI_M\big(\vartheta_{0,v},\dots,\vartheta_{n-1,v},\rho_{f,v}^\vee\big),\]
where $f$ is a defining polynomial for $Z$ and $\vartheta_{i,v}$ is the roof function of $\overline{D}_{i,v}$, for every $i=0,\dots,n-1$ and $v\in\mathfrak{M}$.
\end{theorem}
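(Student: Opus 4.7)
The plan is to reduce the computation of the global height of $Z$ to that of the ambient toric variety $X_\Sigma$ itself, by augmenting the $n$-tuple $(\overline{D}_0,\dots,\overline{D}_{n-1})$ with a single additional metrized toric divisor $\overline{D}_f$ whose distinguished rational section, after twisting by the rational function $\tilde{f}$, carves out the cycle $Z$. By \hyperref[can reduce to the case of a toric variety compatible with the polynomial]{Remark \ref*{can reduce to the case of a toric variety compatible with the polynomial}} one may assume that $X_\Sigma$ is smooth projective and that $\Psi_f$ is a virtual support function on $\Sigma$, so that the toric divisor $D_f$ together with its distinguished rational section $s_f$ is defined on $X_\Sigma$. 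Endowing $D_f$ with the Ronkin metric of \hyperref[definition of Ronkin metric]{Definition \ref*{definition of Ronkin metric}} makes $\overline{D}_f$ a semipositive toric metrized divisor, which is adelic by \hyperref[Ronkin metric is adelic]{Lemma \ref*{Ronkin metric is adelic}}.

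Next, one uses the moving lemma to choose rational sections $s_0,\dots,s_{n-1}$ of $\mathscr{O}(D_0),\dots,\mathscr{O}(D_{n-1})$ respectively, such that $\divisor(s_0),\dots,\divisor(s_{n-1})$ and $Z$ meet properly in $X_\Sigma$. At each place $v\in\mathfrak{M}$, \hyperref[local height of hypersurfaces]{Proposition \ref*{local height of hypersurfaces}} then yields the fundamental local identity
\[
h_{\overline{D}_{0,v},\dots,\overline{D}_{n-1,v}}(Z;s_0,\dots,s_{n-1})=h_{\overline{D}_{0,v},\dots,\overline{D}_{n-1,v},\overline{D}_{f,v}}\bigl(X_\Sigma;s_0,\dots,s_{n-1},\tilde{f}s_f\bigr).
\]
The right-hand side is a $v$-adic local height of the toric variety $X_\Sigma$ with respect to $n+1$ adelic semipositive toric metrized divisors, which vanishes for all but finitely many $v$ by \cite[Theorem 5.2.4]{BPS}. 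The same property therefore transfers to the left-hand side, establishing the integrability of $Z$.

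Summing these local identities over all places with weights $n_v$ then produces the equality of global heights
\[
h_{\overline{D}_0,\dots,\overline{D}_{n-1}}(Z)=h_{\overline{D}_0,\dots,\overline{D}_{n-1},\overline{D}_f}(X_\Sigma),
\]
after which \hyperref[height of a toric variety]{Theorem \ref*{height of a toric variety}} expresses the right-hand side as the claimed sum of mixed integrals, the roof function of $\overline{D}_{f,v}$ being by construction the Legendre-Fenchel dual $\rho_{f,v}^\vee$ of the $v$-adic Ronkin function of $f$. The substantive work has already been carried out in \hyperref[local height of hypersurfaces]{Proposition \ref*{local height of hypersurfaces}}, whose proof rests on the combinatorial description of the Chambert-Loir measure in \hyperref[Chambert-Loir measure in the toric case]{Theorem \ref*{Chambert-Loir measure in the toric case}} together with \hyperref[divisor of Laurent polynomial]{Theorem \ref*{divisor of Laurent polynomial}}; the only delicate point at the global stage is to verify that the Ronkin metric is adelic so that \hyperref[height of a toric variety]{Theorem \ref*{height of a toric variety}} may genuinely be invoked on the augmented tuple of metrized divisors.
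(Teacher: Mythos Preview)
Your proof is correct and follows essentially the same route as the paper's: reduce via \hyperref[can reduce to the case of a toric variety compatible with the polynomial]{Remark \ref*{can reduce to the case of a toric variety compatible with the polynomial}}, apply \hyperref[local height of hypersurfaces]{Proposition \ref*{local height of hypersurfaces}} at each place to identify the local height of $Z$ with a local height of $X_\Sigma$ against the augmented tuple $(\overline{D}_0,\dots,\overline{D}_{n-1},\overline{D}_f)$, use adelicity of the Ronkin metric (\hyperref[Ronkin metric is adelic]{Lemma \ref*{Ronkin metric is adelic}}) together with the integrability of $X_\Sigma$ from \cite[Proposition 5.2.4]{BPS} to obtain vanishing at almost all places, and conclude via \hyperref[height of a toric variety]{Theorem \ref*{height of a toric variety}}. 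The only cosmetic difference is that the paper makes the passage from integrability of $X_\Sigma$ to the almost-everywhere vanishing of its local heights explicit through \cite[Proposition 1.5.8 (1)]{BPS}, whereas you absorb this step into the citation of \cite[Proposition 5.2.4]{BPS}.
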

\begin{proof}
Because of \hyperref[can reduce to the case of a toric variety compatible with the polynomial]{Remark \ref*{can reduce to the case of a toric variety compatible with the polynomial}}, one can assume that $X_\Sigma$ is a smooth projective toric variety on whose fan $\Psi_f$ is a virtual support function. Let hence $s_0,\dots,s_{n-1}$ be rational sections of $\mathscr{O}(D_0),\dots,\mathscr{O}(D_{n-1})$ respectively such that $\divisor(s_0),\dots,\divisor(s_{n-1}),Z$ intersect properly. Because of \hyperref[local height of hypersurfaces]{Proposition \ref*{local height of hypersurfaces}}, the $v$-adic local height of $Z$ with respect to the above choice of sections is given by
\begin{equation}\label{key equality in the proof of global heights}
h_{\overline{D}_{0,v},\dots,\overline{D}_{n-1,v}}(Z;s_0,\dots,s_{n-1})=h_{\overline{D}_{0,v},\dots,\overline{D}_{n-1,v},\overline{D}_{f,v}}\big(X_\Sigma;s_0,\dots,s_{n-1},\tilde{f}s_f\big).
\end{equation}
Because of \hyperref[Ronkin metric is adelic]{Lemma \ref*{Ronkin metric is adelic}}, each member of the family $\overline{D}_0,\dots,\overline{D}_{n-1},\overline{D}_f$ is an adelic semipositive toric metrized divisor on $X_\Sigma$. As a consequence of the first assertion in \cite[Proposition 5.2.4]{BPS}, $X_\Sigma$ is integrable with respect to such a choice of metrized divisors and hence \cite[Proposition 1.5.8 (1)]{BPS} allows to conclude that \[h_{\overline{D}_{0,v},\dots,\overline{D}_{n-1,v},\overline{D}_{f,v}}\big(X_\Sigma;s_0,\dots,s_{n-1},\tilde{f}s_f\big)=0\] for all but finitely many places $v\in\mathfrak{M}$. Comparing with \eqref{key equality in the proof of global heights}, one deduces that $Z$ is integrable with respect to $\overline{D}_0,\dots,\overline{D}_{n-1}$.
\\From the same equality, the global height of $Z$ is seen to satisfy
\[h_{\overline{D}_0,\dots,\overline{D}_{n-1}}(Z)=h_{\overline{D}_0,\dots,\overline{D}_{n-1},\overline{D}_f}(X_\Sigma).\]The formula for the global height of $Z$ follows then from \hyperref[height of a toric variety]{Theorem \ref*{height of a toric variety}}.
\end{proof}

\begin{rem}
As in \hyperref[remark for degree of toric divisor]{Remark \ref*{remark for degree of toric divisor}}, if $Z$ is an irreducible hypersurface on $X_\Sigma$ not intersecting $X_0$ it coincides with $V(\tau)$ for a $1$ dimensional cone $\tau$ of the fan $\Sigma$. In such a case, $Z$ is integrable with respect to a family of adelic semipositive toric metrized divisors $\overline{D}_0,\dots,\overline{D}_{n-1}$ on $X_\Sigma$ and its global height is given by
\[h_{\overline{D}_0,\dots,\overline{D}_{n-1}}(V(\tau))=\sum_{v\in\mathfrak{M}}n_v\MI_{M(v_\tau)}\big(\vartheta_{0,v}|_{\Delta_0^{v_\tau}},\dots,\vartheta_{n-1,v}|_{\Delta_{n-1}^{v_\tau}}\big),\]
see \cite[Proposition 5.1.11 and Proposition 5.2.4]{BPS}. In the previous formula, $\Delta_i$ is the polytope associated to the divisor $D_i$ and $\vartheta_{i,v}$ is the roof function of $\overline{D}_{i,v}$ for every $i=0,\dots,n-1$ and $v\in\mathfrak{M}$, while $v_\tau$ is the minimal nonzero integral vector of $\tau$.
\end{rem}

\begin{rem}
Local and global heights of cycles are symmetric and multilinear with respect to sums of semipositive metrized divisors, provided that all terms are defined. The formulas obtained for $1$-codimensional cycles in toric varieties are consistent with these properties, the sum of semipositive toric metrized divisors corresponding to the sup-convolution of the associated roof functions, as shown in \cite[4.3.14 (1)]{BPS}.
\end{rem}

\section{Examples}\label{section about examples}

For a fixed adelic field $\left(K,(|\cdot|_v,n_v)_{v\in\mathfrak{M}}\right)$ satisfying the product formula and a proper toric variety $X_\Sigma$ over $K$, of dimension $n$, with torus $\mathbb{T}=\spec K[M]$ and dense open orbit $X_0$, we apply in this section the formula in \hyperref[hypersurfaces are integrable and their global height]{Theorem \ref*{hypersurfaces are integrable and their global height}} to four particular cases. In the first one, we focus on specific hypersurfaces of $X_\Sigma$, while in the following three we made relevant choices of the metrized divisors.

\subsection{Binomial hypersurfaces}

For a primitive vector $m$ in $M$ one can consider the Laurent binomial $f=\chi^m-1$; it is irreducible in $K[M]$ as can be verified by considering its Newton polytope. Hence the closure $Z$ in $X_\Sigma$ of the subvariety $V(f)$ of the torus $\spec K[M]$ is an irreducible hypersurface of $X_\Sigma$ with defining polynomial $f$.
\\Let $\overline{D}_0,\dots,\overline{D}_{n-1}$ be toric divisors on $X_\Sigma$, equipped with adelic semipositive toric metrics, with $\vartheta_{i,v}$ the roof function of $\overline{D}_{i,v}$, for every $i=0,\dots,n-1$ and $v\in\mathfrak{M}$. By \hyperref[example Ronkin of a binomial]{Example \ref*{example Ronkin of a binomial}}, $\rho_{f,v}$ coincides for every $v\in\mathfrak{M}$ with the support function of the segment $\overline{0m}$ in $M_\mathbb{R}$. The formula in \hyperref[hypersurfaces are integrable and their global height]{Theorem \ref*{hypersurfaces are integrable and their global height}} implies then that $Z$ is integrable with respect to $\overline{D}_0,\dots,\overline{D}_{n-1}$ and that its global height is given by
\[h_{\overline{D}_0,\dots,\overline{D}_{n-1}}(Z)=\sum_{v\in\mathfrak{M}}n_v\MI_M\big(\vartheta_{0,v},\dots,\vartheta_{n-1,v},\iota_{\overline{0m}}\big),\] because of \hyperref[indicator and support function]{Example \ref*{indicator and support function}}. Considering, as at the end of \hyperref[subsection about mixed integrals]{subsection \ref*{subsection about mixed integrals}}, the quotient lattice $P:=M/\mathbb{Z}m$ and the associated projection $\pi:M\to P$, \hyperref[mixed integral with indicator function of a segment]{Proposition \ref*{mixed integral with indicator function of a segment}} allows to deduce
\begin{equation}\label{height of toric subvariety}
h_{\overline{D}_0,\dots,\overline{D}_{n-1}}(Z)=\sum_{v\in\mathfrak{M}}n_v\MI_P\big(\pi_*\vartheta_{0,v},\dots,\pi_*\vartheta_{n-1,v}\big),
\end{equation}
with $\pi_*\vartheta_{i,v}$ denoting the direct image of $\vartheta_{i,v}$ by $\pi$ for every $i=0,\dots,n-1$ and $v\in\mathfrak{M}$, see \eqref{definition of direct image of concave functions}.

\begin{rem}
Let $Q$ be the dual lattice of $P=M/\mathbb{Z}m$. The projection $\pi:M\to P$ induces an injective dual map $Q\to N$, with image $m^\perp\cap N$. By identifying $Q$ with such an image, which is a saturated sublattice of $N$, one can consider the restriction of the fan $\Sigma$ to $Q_\mathbb{R}$; its corresponding toric variety $X_{\Sigma_Q}$ is proper and has torus $\spec K[P]$. It also comes with a toric morphism $\varphi:X_{\Sigma_Q}\to X_\Sigma$, whose restriction to the dense open orbit coincides with the closed immersion of split tori $\spec K[P]\to\spec K[M]$ given by the surjection $\pi:M\to P$, see \cite[pages 81-83]{BPS}. Finally, the push-forward of the cycle $X_{\Sigma_Q}$ by $\varphi$ is the cycle $Z$ associated to the hypersurface defined by $\chi^m-1$. Indeed, the image of $\varphi$ coincides by properness with the closure in $X_\Sigma$ of the image of $\spec K[P]\to\spec K[M]$, which is an irreducible $(n-1)$-dimensional subscheme of $\spec K[M]$ contained in $V(\chi^m-1)$ as $\chi^{\pi(m)}-1=0$.
\\Hence, equality \eqref{height of toric subvariety} can also be obtained from \hyperref[height of a toric variety]{Theorem \ref*{height of a toric variety}}, the arithmetic projection formula and the fact that $\pi_*\vartheta_{i,v}$ is the roof function of the pull-back of $\overline{D}_{i,v}$ via $\pi$, for every $i=0,\dots,n-1$ and $v\in\mathfrak{M}$, because of \cite[Proposition 4.3.19 and Proposition 2.3.8 (3)]{BPS}.
\end{rem}

\subsection{The canonical height}

A toric divisor $D$ on $X_\Sigma$ generated by global sections admits by \hyperref[canonical metric definition]{Definition \ref*{canonical metric definition}} a distinguished semipositive toric metric at any place. The metrized divisor obtained by the choice of such a family of $v$-adic canonical metrics is denoted by $\overline{D}^{\can}$; it is an adelic semipositive toric metrized divisor.
\\For a cycle $Z$ of dimension $d$ in $X_\Sigma$, the \emph{canonical global height} of $Z$ with respect to a family $D_0,\dots,D_d$ of toric divisors on $X_\Sigma$ generated by global sections is defined to be its global height with respect to $\overline{D}^{\can}_0,\dots,\overline{D}^{\can}_d$ and it is also denoted by $h^{\can}_{D_0,\dots,D_d}(Z)$. The machinery developed in the previous sections allows to express the canonical global height of an effective cycle on $X_\Sigma$ of pure codimension $1$ via convex geometry.

\begin{proposition}\label{canonical height of hypersurfaces}
Let $Z$ be an effective cycle on $X_\Sigma$ of pure codimension $1$ and prime components intersecting $X_0$ and $D_0,\dots,D_{n-1}$ a family of toric divisors on $X_\Sigma$ generated by global sections. The canonical global height of $Z$ with respect to $D_0,\dots,D_{n-1}$ is given by \[h^{\can}_{D_0,\dots,D_{n-1}}(Z)=-\deg_{D_0,\dots,D_{n-1}}(X_\Sigma)\cdot\sum_{v\in\mathfrak{M}}n_v\rho_{f,v}(0)\] for any choice of a defining polynomial $f$ for $Z$.
\end{proposition}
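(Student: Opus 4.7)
The plan is to apply \hyperref[hypersurfaces are integrable and their global height]{Theorem \ref*{hypersurfaces are integrable and their global height}} to the family $\overline{D}_0^{\can},\dots,\overline{D}_{n-1}^{\can}$ and then reduce the resulting mixed integrals by means of \hyperref[mixed integral with indicator functions]{Corollary \ref*{mixed integral with indicator functions}}. First, I would identify the roof function of the $v$-adic canonical metric. If $D_i=D_{\Psi_i}$ for the virtual support function $\Psi_i$, then by \hyperref[canonical metric definition]{Definition \ref*{canonical metric definition}} the metric function of $\overline{D}_i^{\can,v}$ is exactly $\Psi_i$, at every place $v$. Since $D_i$ is generated by global sections, $\Psi_i$ is concave and conic, hence equal to the support function $\Psi_{\Delta_i}$ of its associated polytope $\Delta_i$. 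Dualizing as in \hyperref[indicator and support function]{Example \ref*{indicator and support function}}, the roof function of $\overline{D}_i^{\can,v}$ is therefore the indicator function $\iota_{\Delta_i}$, independently of $v$.

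Next, \hyperref[hypersurfaces are integrable and their global height]{Theorem \ref*{hypersurfaces are integrable and their global height}} yields
\[h^{\can}_{D_0,\dots,D_{n-1}}(Z)=\sum_{v\in\mathfrak{M}}n_v\,\MI_M\big(\iota_{\Delta_0},\dots,\iota_{\Delta_{n-1}},\rho_{f,v}^\vee\big).\]
The function $\rho_{f,v}$ is concave on $N_\mathbb{R}$ with stability set the rational polytope $\NP(f)$ by \hyperref[properties of Ronkin functions]{Proposition \ref*{properties of Ronkin functions}}, so \hyperref[mixed integral with indicator functions]{Corollary \ref*{mixed integral with indicator functions}} applies to each summand and gives
\[\MI_M\big(\iota_{\Delta_0},\dots,\iota_{\Delta_{n-1}},\rho_{f,v}^\vee\big)=-\MV_M(\Delta_0,\dots,\Delta_{n-1})\cdot\rho_{f,v}(0).\]

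Finally, the combinatorial expression for the degree of a toric variety with respect to a family of toric divisors generated by global sections (the same formula of \cite[Proposition 2.10]{O} invoked in the proof of \hyperref[toric local height of hypersurfaces]{Theorem \ref*{toric local height of hypersurfaces}}) identifies
\[\MV_M(\Delta_0,\dots,\Delta_{n-1})=\deg_{D_0,\dots,D_{n-1}}(X_\Sigma).\]
Substituting and pulling the (place-independent) degree out of the sum over $\mathfrak{M}$ yields the stated formula. There is no real obstacle: the argument is a direct specialization of \hyperref[hypersurfaces are integrable and their global height]{Theorem \ref*{hypersurfaces are integrable and their global height}}, the only non-cosmetic point being the observation that the canonical metric has indicator roof functions at every place, which in turn is what collapses the mixed integrals to a single evaluation $\rho_{f,v}(0)$. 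An alternative route, equally short, is to apply \hyperref[toric local height of hypersurfaces]{Theorem \ref*{toric local height of hypersurfaces}} with all metrics canonical: the $v$-adic toric local height then vanishes by definition, which gives the same local identity place by place before summing with the weights $n_v$.
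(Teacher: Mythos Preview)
Your proof is correct and follows essentially the same route as the paper: identify the roof function of the canonical metric as $\iota_{\Delta_i}$ via \hyperref[indicator and support function]{Example \ref*{indicator and support function}}, apply \hyperref[hypersurfaces are integrable and their global height]{Theorem \ref*{hypersurfaces are integrable and their global height}}, collapse each mixed integral with \hyperref[mixed integral with indicator functions]{Corollary \ref*{mixed integral with indicator functions}}, and then invoke \cite[Proposition 2.10]{O} for the degree. Your closing remark about the alternative route through \hyperref[toric local height of hypersurfaces]{Theorem \ref*{toric local height of hypersurfaces}} is a nice observation not made in the paper, but the main argument is identical.
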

\begin{proof}
Denoting for any $i=0,\dots,n-1$ by $\Psi_i$ the function associated to $D_i$, the property of being globally generated implies that $\Psi_i$ is the support function of the lattice polytope $\Delta_i:=\stab(\Psi_i)\subseteq M_\mathbb{R}$. The roof function of $\overline{D}^{\can}_{i,v}$ is hence $\iota_{\Delta_i}$ for every $i=0,\dots,n-1$ and for every $v\in\mathfrak{M}$, because of \hyperref[indicator and support function]{Example \ref*{indicator and support function}}. It follows from \hyperref[hypersurfaces are integrable and their global height]{Theorem \ref*{hypersurfaces are integrable and their global height}} and \hyperref[mixed integral with indicator functions]{Corollary \ref*{mixed integral with indicator functions}} that
\[h^{\can}_{D_0,\dots,D_{n-1}}(Z)=-\MV_M(\Delta_0,\dots,\Delta_{n-1})\cdot\sum_{v\in\mathfrak{M}} n_v\rho_{f,v}(0),\]
with $f$ any defining polynomial for $Z$. To conclude, recall that the degree of $X_\Sigma$ with respect to $D_0,\dots,D_{n-1}$ is given by the mixed volume of the associated polytopes, as proved in \cite[Proposition 2.10]{O}.
\end{proof}

The case of the base field $\mathbb{Q}$ with the adelic structure described in \hyperref[examples of adelic fields]{Example \ref*{examples of adelic fields}} is particularly interesting for arithmetic purposes. For a Laurent polynomial $f$ in $n$ variables and complex coefficients, one defines its (\emph{logarithmic}) \emph{Mahler measure} to be \[m(f):=\frac{1}{(2\pi)^n}\int_{\theta_1,\dots,\theta_n\in[0,2\pi]}\log\left|f\big(e^{i\theta_1},\dots,e^{i\theta_n}\big)\right|\ d\theta_1\dots d\theta_n.\]Such a quantity is notoriously difficult to compute and is sometimes related to special values of $L$-functions, see \cite{Smyth}, \cite{Dening}, \cite{Boyd} and \cite{Lalin}.
\\In \cite[Proposition 7.2.1]{Mail}, Maillot expressed the canonical height of a hypersurface in a toric variety over $\mathbb{Q}$ in terms of the Mahler measure of the associated section. While its proof relies on the study of the arithmetic Chow ring of the ambient toric variety, we here deduce his result from \hyperref[canonical height of hypersurfaces]{Proposition \ref*{canonical height of hypersurfaces}}.

\begin{corollary}[Maillot]\label{canonical height over the rationals}
In the hypotheses and notations  of \hyperref[canonical height of hypersurfaces]{Proposition \ref*{canonical height of hypersurfaces}}, assume moreover that the base adelic field is $\mathbb{Q}$ with its usual adelic structure. Let $f$ be a defining polynomial for $Z$ having as coefficients integer numbers with greatest common divisor $1$. Then,\[h^{\can}_{D_0,\dots,D_{n-1}}(Z)=\deg_{D_0,\dots,D_{n-1}}(X_\Sigma)\cdot m(f).\]
\end{corollary}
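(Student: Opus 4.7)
The plan is to apply Proposition \ref*{canonical height of hypersurfaces} directly and then evaluate the sum $\sum_{v\in\mathfrak{M}}n_v\rho_{f,v}(0)$ place by place, exploiting the assumption on the coefficients of $f$ to handle the non-archimedean contribution.

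First, at the unique archimedean place $v=\infty$ of $\mathbb{Q}$, I would unwind the definitions: by Remark \ref*{remark about known version of Ronkin functions}, $\rho_{f,\infty}(u)=-N_f(-u)$, where $N_f$ is the classical archimedean Ronkin function. Evaluating at $u=0$ and using that the fiber $\trop^{-1}(0)$ is (up to the homeomorphism given by $\iota_\infty$) the polytorus $(\mathbb{S}^1)^n$ endowed with its normalized Haar measure, one recovers
\[
\rho_{f,\infty}(0)=-\frac{1}{(2\pi)^n}\int_{[0,2\pi]^n}\log\bigl|f\bigl(e^{i\theta_1},\dots,e^{i\theta_n}\bigr)\bigr|\,d\theta_1\cdots d\theta_n=-m(f).
\]

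Second, at each non-archimedean place $v=p$, Remark \ref*{remark about known version of Ronkin functions} identifies $\rho_{f,p}$ with the $p$-adic tropicalization $f^{\trop}$, so for $f=\sum_m c_m\chi^m$ one has
\[
\rho_{f,p}(0)=\min_m\bigl(\langle m,0\rangle-\log|c_m|_p\bigr)=-\log\max_m |c_m|_p.
\]
Here the key input is the hypothesis on $f$: since the coefficients $c_m$ are integers, $|c_m|_p\leq1$ for every $m$ and every prime $p$, and since their greatest common divisor is $1$, for each $p$ there exists at least one $m$ with $p\nmid c_m$, i.e.\ $|c_m|_p=1$. Hence $\max_m|c_m|_p=1$ and $\rho_{f,p}(0)=0$ for every prime $p$.

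Finally, since the standard adelic structure on $\mathbb{Q}$ has all weights $n_v=1$, combining the two computations yields $\sum_{v\in\mathfrak{M}}n_v\rho_{f,v}(0)=-m(f)$, and Proposition \ref*{canonical height of hypersurfaces} gives the announced equality. I do not anticipate a real obstacle: the archimedean identification of $\rho_{f,\infty}(0)$ with $-m(f)$ is the only non-formal step and is a direct translation of the definition of the Mahler measure, the rest being a consequence of elementary $p$-adic estimates controlled by the primitivity assumption on the coefficients of $f$.
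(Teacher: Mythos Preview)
Your proposal is correct and follows essentially the same approach as the paper: apply Proposition~\ref*{canonical height of hypersurfaces}, use Remark~\ref*{remark about known version of Ronkin functions} to identify $\rho_{f,\infty}(0)=-m(f)$ at the archimedean place and $\rho_{f,p}(0)=f^{\trop}(0)=0$ at each prime $p$ via the primitivity of the coefficients, and conclude. Your write-up is in fact more explicit than the paper's in justifying why the non-archimedean contributions vanish.
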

\begin{proof}
Let $f$ be a defining polynomial for $Z$ satisfying the assumptions. Because of \hyperref[remark about known version of Ronkin functions]{Remark \ref*{remark about known version of Ronkin functions}}, for every non-archimedean place $v$ of $\mathbb{Q}$\[\rho_{f,v}(0)=f^{\trop}(0)=0.\] At the unique archimedean place $v$ of $\mathbb{Q}$ one has by definition that $\rho_{f,v}(0)=-m(f)$. The statement follows then directly from \hyperref[canonical height of hypersurfaces]{Proposition \ref*{canonical height of hypersurfaces}}.
\end{proof}

\subsection{The $\rho$-height}

The strategy adopted in the \hyperref[section hypersurface]{previous section} to prove the main results of the paper suggests the introduction of a distinguished height function. Let $Z$ be an effective cycle on $X_\Sigma$ of pure codimension $1$ and prime components intersecting $X_0$ and assume that the support function of the Newton polytope of a defining polynomial for $Z$ is a virtual support function on the fan $\Sigma$. By \hyperref[a suitable resolution exists]{Lemma \ref*{a suitable resolution exists}}, this is always the case up to a birational toric transformation. In this setting, the choice of a defining polynomial $f$ for $Z$ determines a toric divisor $D_f$ on $X_\Sigma$ and a distinguished toric metric on it, the Ronkin metric, as introduced in \hyperref[definition of Ronkin metric]{Definition \ref*{definition of Ronkin metric}}. The so-obtained metrized divisor, which is denoted by $\overline{D}_f$, is an adelic semipositive toric metrized divisor by \hyperref[Ronkin metric is adelic]{Lemma \ref*{Ronkin metric is adelic}}.

\begin{defn}
In the above hypotheses and notations, the \emph{$\rho$-height} of $Z$, denoted by $h_{\rho}(Z)$, is defined as its global height with respect to $\overline{D}_f,\dots,\overline{D}_f$, for a choice of a defining polynomial $f$ for $Z$.
\end{defn}

As shown below, the $\rho$-height of $Z$ is independent of the choice of the defining polynomial $f$. Even if it is not clear whether such a height has a significant geometrical interpretation or arithmetical application, it is straightforward to give a combinatorial formula for it.

\begin{proposition}\label{formula for the rho height}
In the above hypotheses and notations, the $\rho$-height of $Z$ is given by \[h_\rho(Z)=(n+1)!\sum_{v\in\mathfrak{M}}n_v\int_{\NP(f)}\rho_{f,v}^\vee\ d\vol_M,\]where $f$ is a defining polynomial for $Z$ and $\NP(f)$ is its Newton polytope.
\end{proposition}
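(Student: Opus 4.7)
The plan is to specialize the global height formula from \hyperref[hypersurfaces are integrable and their global height]{Theorem \ref*{hypersurfaces are integrable and their global height}} to the case where all metrized divisors coincide with $\overline{D}_f$ and to exploit the homogeneity of the mixed integral. First, by \hyperref[definition of Ronkin metric]{Definition \ref*{definition of Ronkin metric}} and the bijection in \hyperref[characterization of toric metrics]{Theorem \ref*{characterization of toric metrics}}, the metric function of $\overline{D}_{f,v}$ is exactly the $v$-adic Ronkin function $\rho_{f,v}$, so its roof function (being the Legendre-Fenchel dual of the metric function) is $\rho_{f,v}^\vee$, a concave function whose effective domain is $\NP(f)$ by \hyperref[properties of Ronkin functions]{Proposition \ref*{properties of Ronkin functions}}(3).

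Next, I would apply \hyperref[hypersurfaces are integrable and their global height]{Theorem \ref*{hypersurfaces are integrable and their global height}} with $\overline{D}_0=\dots=\overline{D}_{n-1}=\overline{D}_f$ to obtain
\[
h_\rho(Z)=\sum_{v\in\mathfrak{M}}n_v\,\MI_M\big(\rho_{f,v}^\vee,\dots,\rho_{f,v}^\vee\big),
\]
where all $n+1$ entries are equal. Then the homogeneity identity $\MI_\mu(g,\dots,g)=(n+1)!\int_Q g\,d\mu$ recalled in \hyperref[subsection about mixed integrals]{subsection \ref*{subsection about mixed integrals}}, applied to $g=\rho_{f,v}^\vee$ on $Q=\NP(f)$, immediately yields the formula in the statement. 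Note that we silently use \hyperref[a suitable resolution exists]{Lemma \ref*{a suitable resolution exists}} together with \hyperref[heights and proper morphisms]{Proposition \ref*{heights and proper morphisms}} to reduce, if necessary, to the case where $\Psi_f$ is a virtual support function on the ambient fan, so that the divisor $D_f$ and the metrized divisor $\overline{D}_f$ are defined to begin with.

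The one subtlety, which I expect to be the main (and only real) obstacle, is the independence of $h_\rho(Z)$ from the choice of defining polynomial $f$. Two defining polynomials for $Z$ differ by multiplication by a monomial $c\chi^m$ with $c\in K^*$ and $m\in M$, and by \hyperref[Ronkin function of a product]{Proposition \ref*{Ronkin function of a product}} and \hyperref[example Ronkin of a monomial]{Example \ref*{example Ronkin of a monomial}} we have $\rho_{c\chi^m f,v}=-\log|c|_v+\langle m,\cdot\rangle+\rho_{f,v}$. Dualizing, $\rho_{c\chi^m f,v}^\vee$ equals $\rho_{f,v}^\vee$ translated by $m$ and shifted by the constant $\log|c|_v$, supported on $\NP(f)+m$. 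The translation invariance of Lebesgue measure then gives
\[
\int_{\NP(c\chi^m f)}\rho_{c\chi^m f,v}^\vee\,d\vol_M=\int_{\NP(f)}\rho_{f,v}^\vee\,d\vol_M+\log|c|_v\cdot\vol_M(\NP(f)),
\]
and summing against the weights $n_v$ over $v\in\mathfrak{M}$, the correction term vanishes by the product formula on $K$. This establishes the well-definedness of $h_\rho(Z)$ and completes the argument.
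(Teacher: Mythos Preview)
Your proposal is correct and follows exactly the paper's own argument: apply \hyperref[hypersurfaces are integrable and their global height]{Theorem \ref*{hypersurfaces are integrable and their global height}} with all entries equal to $\overline{D}_f$, use \hyperref[properties of Ronkin functions]{Proposition \ref*{properties of Ronkin functions}}(3) to identify the domain of $\rho_{f,v}^\vee$ as $\NP(f)$, and invoke the homogeneity $\MI_M(g,\dots,g)=(n+1)!\int g\,d\vol_M$. Your additional paragraph on independence from the choice of $f$ reproduces the content the paper places in a separate remark immediately after the proposition; note only that the reduction via \hyperref[a suitable resolution exists]{Lemma \ref*{a suitable resolution exists}} is unnecessary here, since the standing hypotheses for the $\rho$-height already assume $\Psi_f$ is a virtual support function on $\Sigma$.
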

\begin{proof}
The statement follows trivially from \hyperref[hypersurfaces are integrable and their global height]{Theorem \ref*{hypersurfaces are integrable and their global height}}, \hyperref[properties of Ronkin functions]{Proposition \ref*{properties of Ronkin functions}} (3) and the properties of mixed integrals.
\end{proof}

\begin{rem}
The equality in \hyperref[formula for the rho height]{Proposition \ref*{formula for the rho height}} shows that the $\rho$-height of $Z$ does not depend on the choice of a defining polynomial for it. Indeed, if $f^\prime$ is another such polynomial, it must satisfy $f^\prime=c\cdot \chi^m\cdot f$ for some nonzero monomial $c\cdot \chi^m\in K[M]$. For every $v\in\mathfrak{M}$, one has then that \[\rho_{f^\prime,v}=-\log|c|_v+m+\rho_{f,v}\] by \hyperref[Ronkin function of a product]{Proposition \ref*{Ronkin function of a product}} and \hyperref[example Ronkin of a monomial]{Example \ref*{example Ronkin of a monomial}}. The stated independence follows hence from the relation \[\rho_{f^\prime,v}^\vee=\tau_m\rho_{f,v}^\vee+\log|c|_v\] obtained using \cite[Proposition 2.3.3]{BPS} and from the product formula on $K$.
\end{rem}

It is significant to stress that the formula in \hyperref[formula for the rho height]{Proposition \ref*{formula for the rho height}}, though compact, is difficult to evaluate because of the complexity of the archimedean Ronkin function.

\subsection{The Fubini-Study height}

As a last example, consider the ambient toric variety $X_\Sigma$ to be the $n$-dimensional projective space over $K$. Denote by $D_\infty$ the toric divisor on $\mathbb{P}^n_K$ whose associated Weil divisor is the hyperplane at infinity; the corresponding sheaf is the universal line bundle $\mathscr{O}(1)$ on $\mathbb{P}^n_K$. If not otherwise specified, the notation $\overline{D}_\infty$ will refer to $D_\infty$ equipped with the Fubini-Study metric at archimedean places, see \cite[Example 1.1.2]{BPS}, and the canonical one at non-archimedean places, in the sense of \hyperref[canonical metric definition]{Definition \ref*{canonical metric definition}}. It turns out that $\overline{D}_\infty$ is an adelic semipositive toric metrized divisor. Thanks to \hyperref[hypersurfaces are integrable and their global height]{Theorem \ref*{hypersurfaces are integrable and their global height}}, any effective cycle $Z$ on $\mathbb{P}^n_K$ of pure codimension $1$ is then integrable with respect to $\overline{D}_\infty,\dots,\overline{D}_\infty$ and the corresponding global height \[h_{\FS}(Z):=h_{\overline{D}_\infty,\dots,\overline{D}_\infty}(Z)\]is called the \emph{Fubini-Study height} of $Z$.

\begin{rem}
The Fubini-Study height defined here coincides with the one introduced in \cite{Fal2} and studied in \cite{PhilIII}. Examples of the computation of such height for projective hypersurfaces can be found in \cite{CM}.
\end{rem}

Specializing \hyperref[hypersurfaces are integrable and their global height]{Theorem \ref*{hypersurfaces are integrable and their global height}}, one can write the Fubini-Study height of a projective hypersurface in terms of convex geometry. To do so, denote by $\mathfrak{M}_\infty$ the collection of archimedean places of $K$, which is a finite set by \hyperref[adelic fields have finitely many archimedean places]{Lemma \ref*{adelic fields have finitely many archimedean places}}. After fixing an isomorphism $M\simeq\mathbb{Z}^n$, consider the standard simplex \[\Delta^n:=\big\{(x_1,\dots,x_n): x_1+\dots+x_n\leq1,x_i\geq0\text{ for all }i=1,\dots,n\big\}\] in $M_\mathbb{R}\simeq\mathbb{R}^n$ and, agreeing that $x_0:=1-\sum_{i=1}^nx_i$, set the function $\vartheta_{\FS}:\Delta^n\to\mathbb{R}$ to be \[\vartheta_{\FS}(x):=-\frac{1}{2}\sum_{i=0}^nx_i\log x_i,\]which is defined on the boundary of $\Delta^n$ by continuity.

\begin{proposition}\label{Fubini-Study height of a hypersurface}
Let $Z$ be an effective cycle on $\mathbb{P}^n_K$ of pure codimension $1$ and prime components intersecting $X_0$. The Fubini-Study height of $Z$ is given by\[h_{\FS}(Z)=\sum_{v\in\mathfrak{M}_\infty}n_v\MI_M\big(\vartheta_{\FS},\dots,\vartheta_{\FS},\rho_{f,v}^\vee\big)-\sum_{v\in\mathfrak{M}\setminus\mathfrak{M}_\infty}n_v\rho_{f,v}(0),\]where $f$ is a defining polynomial for $Z$.
\end{proposition}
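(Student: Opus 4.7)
The plan is to apply \hyperref[hypersurfaces are integrable and their global height]{Theorem \ref*{hypersurfaces are integrable and their global height}} to the $n$-tuple $\overline{D}_\infty,\dots,\overline{D}_\infty$ and then simplify place by place, distinguishing the archimedean from the non-archimedean contribution. The polytope $\Delta_\infty$ associated with the toric divisor $D_\infty$ under the identification $M\simeq\mathbb{Z}^n$ is precisely the standard simplex $\Delta^n$; in particular $\vol_M(\Delta^n)=1/n!$ and therefore
\[
\MV_M(\Delta^n,\dots,\Delta^n)=n!\,\vol_M(\Delta^n)=1,
\]
consistently with $\deg_{D_\infty,\dots,D_\infty}(\mathbb{P}^n_K)=1$.

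The first step is to identify the roof function of $\overline{D}_{\infty,v}$ at each place $v\in\mathfrak{M}$. At a non-archimedean place, $\overline{D}_{\infty,v}$ is by definition the canonical metrized divisor $\overline{D}_\infty^{\can}$; since the virtual support function $\Psi_{D_\infty}$ is conic, \hyperref[indicator and support function]{Example \ref*{indicator and support function}} gives $\vartheta_{\overline{D}_{\infty,v}}=\iota_{\Delta^n}$. At an archimedean place, the roof function of the Fubini-Study metric on $\mathscr{O}(1)$ is precisely $\vartheta_{\FS}$; this is the classical computation of the Legendre-Fenchel dual of the metric function of the Fubini-Study metric, obtained from the explicit formula $\|s_{D_\infty}(z)\|_{\FS,v}=|z_0|/(\sum_{i=0}^n|z_i|^2)^{1/2}$ via \eqref{definition of the metric function} and direct dualization (see \cite[Example 2.4.3]{BPS} or the references therein).

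Plugging these two identifications into the formula of \hyperref[hypersurfaces are integrable and their global height]{Theorem \ref*{hypersurfaces are integrable and their global height}} yields
\[
h_{\FS}(Z)=\sum_{v\in\mathfrak{M}_\infty}n_v\,\MI_M\big(\vartheta_{\FS},\dots,\vartheta_{\FS},\rho_{f,v}^\vee\big)
+\sum_{v\in\mathfrak{M}\setminus\mathfrak{M}_\infty}n_v\,\MI_M\big(\iota_{\Delta^n},\dots,\iota_{\Delta^n},\rho_{f,v}^\vee\big).
\]
The non-archimedean terms are then transformed via \hyperref[mixed integral with indicator functions]{Corollary \ref*{mixed integral with indicator functions}}, which gives
\[
\MI_M\big(\iota_{\Delta^n},\dots,\iota_{\Delta^n},\rho_{f,v}^\vee\big)=-\MV_M(\Delta^n,\dots,\Delta^n)\cdot\rho_{f,v}(0)=-\rho_{f,v}(0),
\]
by the mixed-volume computation above. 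Substituting completes the proof.

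The only non-routine ingredient is the first step (identification of $\vartheta_{\FS}$ as the roof function of the archimedean Fubini-Study metric); once this is invoked, the statement reduces to Theorem \ref{hypersurfaces are integrable and their global height} combined with the indicator-function formula of Corollary \ref{mixed integral with indicator functions}. Note that adelicity of the metric on $D_\infty$, required to apply \hyperref[hypersurfaces are integrable and their global height]{Theorem \ref*{hypersurfaces are integrable and their global height}}, is automatic since the chosen metric is canonical at every non-archimedean place and $\mathfrak{M}_\infty$ is finite by \hyperref[adelic fields have finitely many archimedean places]{Lemma \ref*{adelic fields have finitely many archimedean places}}.
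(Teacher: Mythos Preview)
Your proof is correct and follows essentially the same approach as the paper's own: apply \hyperref[hypersurfaces are integrable and their global height]{Theorem \ref*{hypersurfaces are integrable and their global height}}, identify the roof functions as $\vartheta_{\FS}$ at archimedean places and $\iota_{\Delta^n}$ at non-archimedean ones, then simplify the latter via \hyperref[mixed integral with indicator functions]{Corollary \ref*{mixed integral with indicator functions}} together with $\MV_M(\Delta^n,\dots,\Delta^n)=1$. The paper cites \cite[Examples 2.4.3 and 4.3.9]{BPS} for the roof-function identifications, matching your references.
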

\begin{proof}
The roof functions of the metrized divisor $\overline{D}_\infty$ are given by the function $\vartheta_{\FS}$ at archimedean places, as remarked in \cite[Example 2.4.3 and Example 4.3.9 (2)]{BPS} and by the indicator function of $\Delta^n$ at non-archimedean places, by \cite[Example 4.3.9 (1)]{BPS} and \hyperref[indicator and support function]{Example \ref*{indicator and support function}}. The statement follows then from \hyperref[hypersurfaces are integrable and their global height]{Theorem \ref*{hypersurfaces are integrable and their global height}} and \hyperref[mixed integral with indicator functions]{Corollary \ref*{mixed integral with indicator functions}}, together with the fact that $\MV_{M}(\Delta^n,\dots,\Delta^n)=1$ because of the conventions introduced in \hyperref[subsection about real Monge-Ampere measures]{subsection \ref*{subsection about real Monge-Ampere measures}} and \hyperref[lattice volume]{Remark \ref*{lattice volume}}.
\end{proof}

The non-archimedean contributions to the Fubini-Study height are easily computable, since for every Laurent polynomial $f$ with set of coefficients $\Gamma(f)$,
\[-\rho_{f,v}(0)=\log\max_{c\in\Gamma(f)}|c|_v\]
if $v\in\mathfrak{M}\setminus\mathfrak{M}_\infty$. From this equality one easily obtains the following special case.

\begin{corollary}\label{FS height over the rational}
Assume the base adelic field to be $\mathbb{Q}$ with its usual adelic structure. The Fubini-Study height of an effective cycle $Z$ on $\mathbb{P}^n_\mathbb{Q}$ of pure codimension $1$ and prime components intersecting $X_0$ is given by \[h_{\FS}(Z)=\MI_M\big(\vartheta_{\FS},\dots,\vartheta_{\FS},\rho_{f,\infty}^\vee\big),\]where $f$ is a defining polynomial for $Z$ whose coefficients are integer numbers with greatest common divisor $1$.
\end{corollary}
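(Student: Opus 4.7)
The plan is to apply Proposition~\ref{Fubini-Study height of a hypersurface} to the specific situation where $K = \mathbb{Q}$ and then simplify both sums using the special adelic structure on $\mathbb{Q}$ together with the integrality hypothesis on $f$. For the archimedean sum, I would use that $\mathbb{Q}$ has a unique archimedean place $\infty$, with weight $n_\infty = 1$ in the standard adelic structure of Example~\ref{examples of adelic fields}, so that $\sum_{v\in\mathfrak{M}_\infty}n_v\MI_M(\vartheta_{\FS},\dots,\vartheta_{\FS},\rho_{f,v}^\vee)$ collapses to the single term $\MI_M(\vartheta_{\FS},\dots,\vartheta_{\FS},\rho_{f,\infty}^\vee)$.

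The main content of the argument then lies in showing that the second sum in Proposition~\ref{Fubini-Study height of a hypersurface} vanishes. Writing $f = \sum_{m}c_m\chi^m$ with set of nonzero coefficients $\Gamma(f)\subseteq\mathbb{Z}$, the identity for non-archimedean places recalled right before the corollary gives
\[
-\rho_{f,p}(0) = \log\max_{c\in\Gamma(f)}|c|_p
\]
for every prime $p$, as a direct consequence of Remark~\ref{remark about known version of Ronkin functions} evaluated at $u = 0$. The hypothesis that the integer coefficients of $f$ have greatest common divisor equal to $1$ means exactly that for every prime $p$ there exists some $c\in\Gamma(f)$ with $p\nmid c$, equivalently $|c|_p = 1$; since $|c^\prime|_p\leq 1$ for every $c^\prime\in\mathbb{Z}$, it follows that $\max_{c\in\Gamma(f)}|c|_p = 1$, and hence $\rho_{f,p}(0) = 0$.

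As a result, every term $-n_p\,\rho_{f,p}(0)$ in the non-archimedean sum vanishes, and substituting into the formula of Proposition~\ref{Fubini-Study height of a hypersurface} yields the claimed equality. I do not anticipate any serious obstacle: all the ingredients are already in place, and the argument is essentially a numerical specialization of the general formula.
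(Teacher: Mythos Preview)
Your proposal is correct and follows exactly the approach indicated in the paper, which states that the corollary follows from Proposition~\ref{Fubini-Study height of a hypersurface} together with the displayed identity $-\rho_{f,v}(0)=\log\max_{c\in\Gamma(f)}|c|_v$ for non-archimedean $v$. Your justification that $\max_{c\in\Gamma(f)}|c|_p=1$ under the coprimality hypothesis is precisely the missing detail the paper leaves implicit.
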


Because of the presence of an archimedean Ronkin function, the formula in \hyperref[FS height over the rational]{Corollary \ref*{FS height over the rational}} appears arduous to evaluate. It would anyway be interesting to use it to study arithmetical properties of projective hypersurfaces or recover similar results to the ones obtained in \cite{CM}.

\bibliographystyle{amsalpha}
\bibliography{./bibliography.bib}

\end{document}